\newcommand{\Z}{\mathds{Z}}
\newcommand{\N}{\mathds{N}}
\newcommand{\R}{\mathds{R}}
\newcommand{\p}{\phantom}
\newcommand{\q}{\quad}
\newtheorem{thm}{Theorem}[section]
\newtheorem{lem}[thm]{Lemma}
\newtheorem{kor}[thm]{Corollary}
\newtheorem{conj}[thm]{Conjecture}
\newtheorem{prop}[thm]{Proposition}
\theoremstyle{definition}
\newtheorem*{defi}{Definition}
\theoremstyle{remark}
\newtheorem*{rema}{Remark}
\title{On the arithmetic and geometric means of the prime numbers}
\author{Christian Axler}
\address{Institute of Mathematics\\ Heinrich-Heine University Düsseldorf\\
40225 Düsseldorf, Germany}
\email{christian.axler@hhu.de}
\date{\today}
\subjclass[2010]{Primary 11N05; Secondary 11A41}
\keywords{arithmetic mean, geometric mean, prime number, Chebyshev's $\vartheta$-function}
\begin{document}

\begin{abstract}
In this paper we establish explicit upper and lower bounds for the ratio of the arithmetic and geometric means of the prime numbers, which improve the current
best estimates. Further, we prove several conjectures related to this ratio stated by Hassani. In order to do this, we use explicit estimates for the prime 
counting function, Chebyshev's $\vartheta$-function and the sum of the first $n$ prime numbers.
\end{abstract}

\maketitle

%\tableofcontents

\section{Introduction}

Let $a_n$ be the arithmetic mean and $g_n$ be the geometric mean of the first $n$ positive integers, respectively. Stirling's approximation for $n!$ implies
that $a_n/g_n \to e/2$ for $n \to \infty$. In his paper \cite{hassani2013}, Hassani studied the arithmetic and geometric means of the prime numbers, i.e.
\begin{displaymath}
A_n = \frac{1}{n}\sum_{k \leq n}p_k, \q G_n = ( p_1 \cdot \ldots \cdot p_n)^{1/n}.
\end{displaymath}
Here, as usual, $p_k$ denotes the $k$th prime number. By setting $D(n) = \log p_n - \vartheta(p_n)/n$ and $R(n) = \sum_{k \leq n} p_k/n - p_n/2$, where
Chebyshev's $\vartheta$-function is defined by $\vartheta(x) = \sum_{p \leq x} \log p$, Hassani \cite[p. 1595]{hassani2013} derived the identity
\begin{equation}
\log \frac{A_n}{G_n} = D(n) + \log \left( 1 + \frac{2R(n)}{p_n} \right) - \log 2  \tag{1.1} \label{1.1}
\end{equation}
for the ratio of $A_n$ and $G_n$, which plays an important role in this paper. First, we establish some asymptotic formulae for the quantities $D(n)$, $G_n$ and
$A_n$ which help us to find the following asymptotic formula for the ratio of $A_n$ and $G_n$. Here, let $r_t = (t-1)!(1-1/2^t)$ and the positive integers 
$k_1, \ldots, k_s$, where $s$ is a positive integer, are defined by the recurrence formula $k_s + 1!k_{s-1} + \ldots + (s-1)!k_1 = s \cdot s!$.

\begin{thm}[See Theorem \ref{thm206}] \label{thm101}
For each positive integer $m$, we have
\begin{displaymath}
\frac{A_n}{G_n} = e\left( \frac{1}{2} + \sum_{i = 1}^m \frac{1}{\log^ip_n} \left( -r_{i+1} + r_i + \sum_{s=1}^{i-1} r_s k_{i-s} \right) \right) \cdot \exp
\left( \sum_{j = 1}^m\frac{k_j}{\log^jp_n} \right) + O \left( \frac{1}{\log^{m+1}p_n} \right).
\end{displaymath}
\end{thm}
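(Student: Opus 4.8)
The plan is to substitute into the identity \eqref{1.1} asymptotic expansions, in powers of $1/\log p_n$, of its two non-constant ingredients $D(n)$ and $R(n)/p_n$, and then exponentiate and multiply out. For $D(n) = \log p_n - \vartheta(p_n)/n$: since $\vartheta(x) = x + O(x/\log^N x)$ for every $N$ (a classical consequence of the prime number theorem, in the explicit form recalled earlier) and $p_n/n = O(\log p_n)$, the term $\vartheta(p_n)/n$ differs from $p_n/n$ by $O(1/\log^N p_n)$ for every $N$, so it suffices to expand $p_n/n$. From $n = \pi(p_n) = \tl(p_n) + O(p_n e^{-c\sqrt{\log p_n}})$ together with $\tl(x) = \frac{x}{\log x}\sum_{j=0}^{N}\frac{j!}{\log^j x} + O(x/\log^{N+2}x)$ one gets $(n\log p_n)/p_n = \sum_{j=0}^{N}j!/\log^j p_n + O(1/\log^{N+1}p_n)$. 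Inverting this power series — its constant term is $1$, so the reciprocal has coefficients $a_j$ with $a_0 = 1$ and $\sum_{l=0}^{j}a_l(j-l)! = 0$ for $j\ge1$ — and multiplying by $\log p_n$ gives
\begin{displaymath}
\frac{p_n}{n} = \log p_n + a_1 + \frac{a_2}{\log p_n} + \dots + \frac{a_{m+1}}{\log^m p_n} + O\!\left(\frac{1}{\log^{m+1}p_n}\right).
\end{displaymath}
Because $a_0 = 1$, the $\log p_n$-terms cancel in $D(n)$ and one is left with $D(n) = -a_1 - \sum_{j=1}^{m}a_{j+1}/\log^j p_n + O(1/\log^{m+1}p_n)$. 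A short induction matching the recurrence for the $a_j$ against $k_s + 1!k_{s-1} + \dots + (s-1)!k_1 = s\cdot s!$ shows $-a_1 = 1$ and $-a_{j+1} = k_j$ for all $j\ge1$; hence $D(n) = 1 + \sum_{j=1}^m k_j/\log^j p_n + O(1/\log^{m+1}p_n)$, and therefore $e^{D(n)} = e\exp(\sum_{j=1}^m k_j/\log^j p_n)(1 + O(1/\log^{m+1}p_n))$.

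Next I would expand $R(n)/p_n$. Abel summation gives $\sum_{k\le n}p_k = np_n - \int_2^{p_n}\pi(t)\,dt$, so $\frac12 + R(n)/p_n = \frac{1}{np_n}\sum_{k\le n}p_k = 1 - \frac{1}{np_n}\int_2^{p_n}\pi(t)\,dt$. Replacing $\pi$ by $\tl$ (with negligible error) and integrating by parts, $\int_2^{p_n}\pi(t)\,dt = p_n\tl(p_n) - \tl(p_n^2) + O(1) + O(p_n^2 e^{-c\sqrt{\log p_n}})$; using $\log(p_n^2) = 2\log p_n$ in the $\tl$-expansions the difference collapses to $p_n\tl(p_n) - \tl(p_n^2) = p_n^2\sum_{t\ge1}r_t/\log^t p_n + O(p_n^2/\log^{m+2}p_n)$ with $r_t = (t-1)!(1 - 1/2^t)$ — precisely where the $r_t$ enter. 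Dividing by $np_n$ and inserting $p_n/n = \log p_n - 1 - k_1/\log p_n - k_2/\log^2 p_n - \dots$ from above, I multiply the two series and read off the coefficient of $1/\log^i p_n$, which equals $r_{i+1} - r_i - \sum_{s=1}^{i-1}r_s k_{i-s}$; hence
\begin{displaymath}
\frac12 + \frac{R(n)}{p_n} = \frac12 + \sum_{i=1}^{m}\frac{1}{\log^i p_n}\left(-r_{i+1} + r_i + \sum_{s=1}^{i-1}r_s k_{i-s}\right) + O\!\left(\frac{1}{\log^{m+1}p_n}\right).
\end{displaymath}
(If the paper has already recorded the asymptotic expansion of $A_n$, this step is merely a reference, since $\frac12 + R(n)/p_n = A_n/p_n$.)

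Finally, \eqref{1.1} gives $A_n/G_n = \tfrac12 e^{D(n)}(1 + 2R(n)/p_n) = e^{D(n)}(\tfrac12 + R(n)/p_n)$, and multiplying the two expansions just obtained — using that $\exp(\sum_{j=1}^m k_j/\log^j p_n)$ and $\tfrac12 + \sum_{i=1}^m c_i/\log^i p_n$ are bounded, so that every product of an $O(1/\log^{m+1}p_n)$ term with a bounded factor or with the $O(1/\log p_n)$ factor stays $O(1/\log^{m+1}p_n)$ — yields exactly the asserted formula with $c_i = -r_{i+1} + r_i + \sum_{s=1}^{i-1}r_s k_{i-s}$.

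The bulk of the work is bookkeeping rather than ideas: the two power-series inversions and the final product must be truncated at precisely the right order in $1/\log p_n$, and the one genuinely delicate point is verifying that the coefficients $a_j$ of the reciprocal of $\sum_{j\ge0}j!\,y^j$ are tied to the $k_j$ of the stated recurrence via $k_j = -a_{j+1}$ (equivalently $k_0 = 1$ and $k_s + \sum_{i=1}^{s-1}i!\,k_{s-i} = s\cdot s!$) — this is the single place where the somewhat opaque definitions of the $k_s$ and $r_t$ get matched to what the analysis naturally produces.
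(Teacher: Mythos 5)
Your proposal is correct and follows essentially the same route as the paper: both factor $A_n/G_n = (A_n/p_n)\cdot e^{D(n)}$ via \eqref{1.1} and \eqref{2.5}, expand $A_n/p_n$ through the coefficients $r_i$ together with the substitution $p_n/n = \log p_n - 1 - \sum_s k_s/\log^s p_n + \cdots$, and multiply by $e\exp\bigl(\sum_j k_j/\log^j p_n\bigr)$, collecting the coefficient $-r_{i+1}+r_i+\sum_{s=1}^{i-1}r_sk_{i-s}$ exactly as in the paper. The only difference is that you re-derive the two ingredients from first principles (the expansion of $D(n)$, which the paper takes from Panaitopol via Proposition \ref{prop201}, and the expansion of $A_n$, which the paper cites from \cite{axler2015}) by inverting the series $\sum_j j!/\log^j x$ and by Abel summation, and your identification $k_j=-a_{j+1}$ does correctly reproduce the recurrence \eqref{2.1}.
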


One of Hassani's results \cite[p. 1602]{hassani2013} is that $A_n/G_n = e/2 + O(1/\log n)$, which implies that the ratio of $A_n$ and $G_n$ also tends to $e/2$
for $n \to \infty$. Setting $m = 2$ in Theorem \ref{thm101}, we get the following more accurate asymptotic formula
\begin{equation}
\frac{A_n}{G_n} = \frac{e}{2} + \frac{e}{4\log p_n} + \frac{e}{\log^2 p_n} + O \left( \frac{1}{\log^3p_n} \right). \tag{1.2} \label{1.2}
\end{equation}
%\begin{kor}[See Corollary \ref{kor209}] \label{kor102}
%We have
%\begin{displaymath}
%\frac{A_n}{G_n} = \frac{e}{2} + \frac{e}{4\log p_n} + \frac{e}{\log^2 p_n} + O \left( \frac{1}{\log^3p_n} \right).
%\end{displaymath}
%\end{kor}
%
%Then we use a result of Cipolla \cite[p. 139]{cp} to derive the following asymptotic expansion for the ratio of $A_n$ and $G_n$.
%
Using explicit estimates for the $n$-th prime number and the prime counting function $\pi(x)$, which denotes the number of primes not exceeding $x$, Hassani
\cite[Theorem 1.1]{hassani2013} found some explicit estimates for the ratio of $A_n$ and $G_n$. The proof of these estimates consists of three steps. First, 
Hassani gave some explicit estimates for the quanities $D(n)$ and $\log(1+2R(n)/p_n)$ and then he used \eqref{1.1}. We follow this method to refine Hassani's 
estimates by showing the following both results in the direction of \eqref{1.2}.

\begin{thm}[See Corollary \ref{kor602}] \label{thm102}
For every positive integer $n \geq 62$, we have
\begin{displaymath}
\frac{A_n}{G_n} > \frac{e}{2} + \frac{e}{4 \log p_n} + \frac{0.61e}{\log^2 p_n}.
\end{displaymath}
\end{thm}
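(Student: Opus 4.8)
The plan is to make the asymptotic expansion \eqref{1.2} effective, following the three‑step scheme of Hassani recalled in the introduction: bound $D(n)$ from below, bound $\log(1+2R(n)/p_n)$ from below, and combine through the identity \eqref{1.1}. It is convenient to rewrite \eqref{1.1} in the multiplicative form
\[
\frac{A_n}{G_n} = \frac{1}{2}\,e^{D(n)}\left(1 + \frac{2R(n)}{p_n}\right),
\]
and to observe that $D(n) = \log p_n - \vartheta(p_n)/\pi(p_n)$ (since $n=\pi(p_n)$ and $\vartheta(p_n)=n\log G_n$) while $1 + 2R(n)/p_n = 2A_n/p_n = \tfrac{2}{p_n\pi(p_n)}\sum_{k\le n}p_k > 0$. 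Hence a lower bound for each of the two factors (beyond the constant $1/2$) immediately yields a lower bound for the ratio. In the body this is Corollary \ref{kor602}, obtained from the general explicit estimate of the preceding section; what I sketch below is the mechanism behind that estimate.

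First I would bound $D(n)$. Using the explicit upper estimate for Chebyshev's $\vartheta$‑function and the explicit lower estimate for $\pi(x)$ established earlier in the paper, both evaluated at $x=p_n$, one gets an upper bound for $\vartheta(p_n)/\pi(p_n)$ of the shape $\log p_n - 1 - 1/\log p_n + c_1/\log^2 p_n$, valid once $p_n$ (equivalently $n$) exceeds an explicit threshold; hence $D(n) \ge 1 + 1/\log p_n - c_1/\log^2 p_n$ in that range. This is where it matters that the $\pi$‑ and $\vartheta$‑estimates are accurate to order $x/\log^3 x$ with an explicit error: it is exactly this accuracy that pins down the coefficient $1/4$ of the term $e/(4\log p_n)$ in the conclusion. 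Next I would bound $1 + 2R(n)/p_n = \tfrac{2}{p_n\pi(p_n)}\sum_{k\le n}p_k$ from below, using the explicit lower estimate for $\sum_{k\le n}p_k$ together with the explicit upper estimate for $\pi(x)$ at $x=p_n$; this yields a bound of the shape $1 + 2R(n)/p_n \ge 1 - 1/(2\log p_n) - c_2/\log^2 p_n$ for $p_n$ past a threshold.

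Then I would combine the two. Inserting the lower bounds into the multiplicative form and using $e^{1+t} \ge e\bigl(1 + t + t^2/2\bigr)$ for $t\ge 0$ (with $t = 1/\log p_n - c_1/\log^2 p_n > 0$ once $\log p_n \ge c_1$), multiplying out gives a lower bound of the form
\[
\frac{A_n}{G_n} \ge \frac{e}{2}\left(1 + \frac{1}{2\log p_n} + \frac{2}{\log^2 p_n} - \frac{C}{\log^3 p_n} - \cdots\right).
\]
Since the effective second‑order coefficient $2$ comfortably exceeds $2\cdot 0.61 = 1.22$, the claimed inequality reduces to $\tfrac{2-1.22}{\log^2 p_n} \ge \tfrac{C}{\log^3 p_n} + \cdots$, which holds as soon as $\log p_n$ (hence $p_n$, hence $n$) is large enough, i.e.\ for all $n\ge n_0$ with $n_0$ explicit. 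Finally, for the finitely many $n$ with $62 \le n < n_0$ I would verify the stated inequality directly from the explicit values of $A_n$, $G_n$ and $p_n$; the lower cutoff $n=62$ is simply the first index from which the inequality holds.

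The main obstacle is the bookkeeping in the first two steps: because the conclusion keeps the exact coefficient $1/4$ in the $1/\log p_n$ term and only relaxes the $1/\log^2 p_n$ coefficient to $0.61$, the estimates for $\pi(x)$, $\vartheta(x)$ and $\sum_{k\le n}p_k$ must be carried two orders beyond their main terms with explicit and sufficiently small error constants, and one must be careful when passing between bounds in the variable $x=p_n$ and bounds in the variable $n$ (this is where explicit bounds on $p_n$ in terms of $n$, such as $p_n > n\log n$ and its refinements, enter). Keeping all error constants small enough to leave room between $0.61$ and the true value $1$, while keeping $n_0$ small enough that the remaining finite check is feasible, is the delicate balance.
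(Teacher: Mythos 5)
Your proposal is correct in outline and follows the same basic mechanism as the paper: the identity \eqref{1.1}, separate explicit lower bounds for $D(n)$ and for the $R(n)$-term, and a finite computation for small $n$. The one organizational difference is the variable in which the two component bounds are combined. You propose to keep everything in $\log p_n$, bounding $D(n)$ from below via Proposition \ref{prop301} and bounding $1+2R(n)/p_n$ from below directly in terms of $p_n$; the paper instead first proves Theorem \ref{thm601}, a lower bound for $A_n/G_n$ in the variable $n$, by combining the $n$-variable estimates of Proposition \ref{prop306} and Proposition \ref{prop501}, and only then converts to the variable $p_n$ using \eqref{3.17} and \eqref{3.19}. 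The paper's ordering is the more natural one here because the available explicit lower bound for $\sum_{k\le n}p_k$ (hence for $R(n)$) is a function of $n$, and dividing it by the lower bound $p_n\ge n(\log n+\log\log n-1)$ again produces a function of $n$; your route would force an extra conversion $n=\pi(p_n)$ inside the $R$-estimate, which costs precision exactly where the margin is thinnest. One point to repair in your write-up: you record the lower bound for $D(n)$ in the shape $1+1/\log p_n-c_1/\log^2 p_n$, but for your subsequent claim that the combined second-order coefficient exceeds $2\cdot 0.61=1.22$ you in fact need the second-order term of that lower bound to be \emph{positive} and close to its true value $3$ (the paper's Proposition \ref{prop301} gives $+2.7/\log^2 p_n$); with a negative second-order term the product $(1+1/L+a/L^2)(1-1/(2L)-b/L^2)$ has second-order coefficient $a-\tfrac12-b<0$ and the argument collapses. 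With that sign corrected, and with the understanding that the effective coefficient after all explicit losses is only about $1.22$ rather than the asymptotic $2$, your plan matches what Corollary \ref{kor602} actually carries out.
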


\begin{thm}[See Theorem \ref{thm604}] \label{thm103}
For every positive integer $n \geq 294\,635$, we have
\begin{displaymath}
\frac{A_n}{G_n} < \frac{e}{2} + \frac{e}{4\log p_n} + \frac{1.52e}{\log^2 p_n}.
\end{displaymath}
\end{thm}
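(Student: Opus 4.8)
The plan is to follow the three--step method of Hassani recalled above: bound $D(n)$ from above, bound $\log\bigl(1+2R(n)/p_n\bigr)$ from above, and then combine the two estimates through the identity \eqref{1.1} and exponentiate. Recall that $D(n)=\log p_n-\vartheta(p_n)/n$ and that $n=\pi(p_n)$; hence a lower bound for Chebyshev's function $\vartheta(x)$ together with an upper bound for $\pi(x)$ and a lower bound for $p_n$ yields, for all $n$ beyond an explicit threshold, an inequality
\[
D(n)<1+\frac{1}{\log p_n}+\frac{\al_2}{\log^2 p_n},
\]
whose first two coefficients match the known asymptotics $D(n)=1+1/\log p_n+O(1/\log^2 p_n)$ and whose constant $\al_2$ is governed by the quality of the prime estimates fed in. In the same spirit, writing $S_n=\sum_{k\le n}p_k$, so that $1+2R(n)/p_n=2S_n/(np_n)$, an explicit upper bound for $S_n$ in terms of $n$ and $p_n$ gives $2S_n/(np_n)<1-\tfrac{1}{2\log p_n}+\be_2/\log^2 p_n$, and hence, using $\log(1+x)\le x$,
\[
\log\!\left(1+\frac{2R(n)}{p_n}\right)<-\frac{1}{2\log p_n}+\frac{\be_2}{\log^2 p_n}
\]
for all $n$ past an explicit threshold. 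These are precisely the explicit estimates for $D(n)$, for $\vartheta$, $\pi$, $p_n$ and for $\sum_{k\le n}p_k$ developed in the earlier sections.

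Next I would add the two displayed inequalities, subtract $\log 2$ as in \eqref{1.1}, and exponentiate. Putting $t=D(n)+\log\bigl(1+2R(n)/p_n\bigr)-1$, the identity \eqref{1.1} gives
\[
\frac{A_n}{G_n}=\frac{e}{2}\,e^{t},\q\text{where}\q 0<t<\frac{1}{2\log p_n}+\frac{\al_2+\be_2}{\log^2 p_n}.
\]
Applying the elementary bound $e^{t}<1+t+t^2$ for $0<t\le 1$ (which holds here once $\log p_n$ is large, in particular for $n\ge 294\,635$) and collecting powers of $1/\log p_n$, the right--hand side is at most
\[
\frac{e}{2}\left(1+\frac{1}{2\log p_n}+\frac{c_2}{\log^2 p_n}\right)+O\!\left(\frac{1}{\log^3 p_n}\right)=\frac{e}{2}+\frac{e}{4\log p_n}+\frac{c_2e/2+O(1/\log p_n)}{\log^2 p_n},
\]
with $c_2=\al_2+\be_2+\tfrac14$. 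It then remains to verify that $c_2e/2$, together with the $O(1/\log^3 p_n)$--remainder absorbed uniformly over $n\ge 294\,635$, stays below $1.52\,e$; equivalently, that the combined constant $\al_2+\be_2$ stays below roughly $2.79$.

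The main obstacle is exactly this last piece of bookkeeping. Since the true coefficient of $e/\log^2 p_n$ in \eqref{1.2} is $1$, there is a genuine margin of about $0.52$ to work with, but that margin has to absorb \emph{all} of the error terms at once --- those from the explicit $\vartheta$--, $\pi$-- and $p_n$--estimates, from the inequality $\log(1+x)\le x$, and from $e^{t}<1+t+t^2$ --- and uniformly for every $n\ge 294\,635$. In particular the margin is too small for crude prime estimates, so one must feed in the sharp explicit bounds for $\vartheta(x)$, $\pi(x)$, $p_n$ and $\sum_{k\le n}p_k$ proved earlier in the paper; the threshold $294\,635$ is (presumably) the smallest $n$ from which these estimates are simultaneously strong enough to make $c_2e/2+(\text{remainder})<1.52\,e$. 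Finally, the finitely many values of $n$ not covered by the asymptotic argument --- if any remain --- are dispatched by a direct numerical evaluation of $A_n/G_n$ from tabulated values of $p_n$, of $\vartheta(p_n)=\sum_{k\le n}\log p_k$ and of $S_n=\sum_{k\le n}p_k$; together with \eqref{1.1} this completes the proof.
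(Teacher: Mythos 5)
Your strategy is exactly the paper's: bound $D(n)$ above by $1+1/\log p_n+3.84/\log^2 p_n$ (Proposition \ref{prop305}, valid for $n\ge 74\,004\,585$), bound $\log(1+2R(n)/p_n)$ above by $-1/(2\log p_n)-1/\log^2 p_n-2.9/(2\log^2 n\log p_n)$ (Proposition \ref{prop502}), insert both into \eqref{1.1}, exponentiate with an elementary quadratic bound on $e^t$, and dispatch the range $294\,635\le n<74\,004\,585$ (which the asymptotic argument does not reach) by direct computation. The decomposition, the inputs, and the role of the final computer check all match the paper.

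The one place your write-up would not close as stated is the final bookkeeping, which you yourself flag as the crux. With $e^t<1+t+t^2$ the coefficient of $1/\log^2 p_n$ you must control is $\alpha_2+\beta_2+1/4$, and your sufficient condition is $\alpha_2+\beta_2<2.79$; but the estimates actually available give $\alpha_2=3.84$ and $\beta_2=-1$, so $\alpha_2+\beta_2=2.84$ and the condition just fails --- one lands at roughly $1.545e/\log^2 p_n$ rather than $1.52e/\log^2 p_n$. The paper recovers the deficit by replacing $e^t<1+t+t^2$ with the sharper $e^t<1+t+2t^2/3$, valid for $t<\log(4/3)$ (inequality \eqref{4.3}), which turns the $+1/4$ into $+1/6$ and yields the leading coefficient $2.84+1/6=9.02/3$; the remaining third- and fourth-order terms (helped by the extra negative term $-2.9/(2\log^2 n\log p_n)$ from Proposition \ref{prop502}) are then absorbed using $\log p_n\ge 19.937$, giving the bound $3.04/\log^2 p_n$ and hence $1.52e/\log^2 p_n$. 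So: same route, but the quadratic bound on the exponential must be taken slightly sharper than the one you propose, or the margin evaporates.
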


Since the computation of $p_n$ is difficult for large $n$, the estimates for the ratio of $A_n$ and $G_n$ obtained in Theorem \ref{thm102} and Theorem
\ref{thm103} are ineffective for large $n$. Hence, we are interested in estimates for $A_n/G_n$ in terms of $n$. For this purpose, we find the following 
estimates.

\begin{thm}[See Theorem \ref{thm601}] \label{thm104}
For every positive integer $n \geq 139$, we have
\begin{displaymath}
\frac{A_n}{G_n} > \frac{e}{2} + \frac{e}{4\log n} - \frac{e(\log \log n - 2.8)}{4 \log^2n}.
\end{displaymath}
\end{thm}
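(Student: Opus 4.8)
The plan is to go back to Hassani's identity \eqref{1.1} and to bound its right-hand side from below by an explicit function of $n$ alone. Write $L=\log n$ and $M=\log\log n$. The inequality to be shown reads $A_n/G_n>\tfrac{e}{2}\bigl(1+\tfrac{1}{2L}-\tfrac{M-2.8}{2L^{2}}\bigr)$, and for $n\ge 139$ the bracket is positive; since $\log(1+x)\le x$, it therefore suffices to prove
\[
D(n)+\log\Bigl(1+\frac{2R(n)}{p_n}\Bigr)\;>\;1+\frac{1}{2L}-\frac{M-2.8}{2L^{2}}.
\]
Indeed, by \eqref{1.1} this gives $\log(A_n/G_n)>1-\log 2+\tfrac{1}{2L}-\tfrac{M-2.8}{2L^{2}}\ge 1-\log 2+\log\bigl(1+\tfrac{1}{2L}-\tfrac{M-2.8}{2L^{2}}\bigr)$, which is precisely the logarithm of the claimed bound. (Equivalently one may argue with $A_n/G_n=A_n\,e^{-\vartheta(p_n)/n}$ directly, bounding $A_n=\tfrac1n\sum_{k\le n}p_k$ from below and $\vartheta(p_n)/n$ from above.)

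I would estimate the two summands separately, using the explicit estimates for $\pi$ (hence for $p_n$), for $\vartheta$, and for $\sum_{k\le n}p_k$ from the earlier sections. For the first, write $D(n)=\log p_n-\vartheta(p_n)/n=\log n+(\log t-t)+\bigl(p_n-\vartheta(p_n)\bigr)/n$ with $t=p_n/n$. Since $t\mapsto\log t-t$ is decreasing for $t>1$, a sufficiently precise explicit \emph{upper} bound for $t$ of Cipolla type, together with $\bigl|p_n-\vartheta(p_n)\bigr|/n=o(1/\log^{2}n)$ (which follows from a prime-number-theorem-strength estimate for $\vartheta$, e.g.\ an error of order $x/\log^{3}x$ with a small constant), yields after expanding the logarithm $D(n)>1+\tfrac1L-\tfrac{M-c_1}{L^{2}}$, with $c_1$ arbitrarily close to $3$ for large $n$. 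For the second, $1+2R(n)/p_n=\tfrac{2}{np_n}\sum_{k\le n}p_k$; inserting an explicit lower bound for $\sum_{k\le n}p_k$ and an explicit upper bound for $p_n$, then taking logarithms, gives $\log\bigl(1+2R(n)/p_n\bigr)>-\tfrac{1}{2L}+\tfrac{M-c_2}{2L^{2}}$, with $c_2$ slightly above $2.25$. Adding the two,
\[
D(n)+\log\Bigl(1+\frac{2R(n)}{p_n}\Bigr)\;>\;1+\frac{1}{2L}-\frac{M-(2c_1-c_2)}{2L^{2}},
\]
and $2c_1-c_2$ can be kept above $2.8$ — the asymptotically optimal choice gives $3.75$, in agreement with \eqref{1.2} once $\log p_n$ is rewritten in terms of $\log n$ — so the displayed inequality holds for all $n$ beyond an explicit $n_0$. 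The remaining range $139\le n<n_0$ is then checked numerically from a table of primes, and the constant $2.8$ is whatever the explicit inputs allow down to $n=139$.

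I expect the main difficulty to be quantitative rather than conceptual. The asymptotics leave a cushion of only about $\tfrac{0.475}{\log^{2}n}$ — the gap between the true constant $3.75$ and the claimed $2.8$ — so every remainder has to be pushed below it \emph{for all $n\ge 139$}, not merely in the limit. The delicate point is that in $D(n)=\log n+(\log t-t)+\cdots$ the quantity $\log t-t$ reacts to errors in $t$ essentially one-to-one (its derivative is near $-1$), so one needs $t=p_n/n$ to within $o(1/\log^{2}n)$ from above; a three-term bound such as $p_n<n\bigl(\log n+\log\log n-1+\tfrac{\log\log n-2}{\log n}\bigr)$ does \emph{not} suffice, because the $(\log\log n)^{2}/\log^{2}n$–term of Cipolla's expansion must be retained — and, in the same vein, the $\vartheta$-estimate must be of prime-number-theorem strength, not merely $\vartheta(x)<(1+\varepsilon)x$. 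Matching the order of the retained expansion, the size of $n_0$, and the finite verification below $n_0$ is where the labour sits. Finally, note that the theorem cannot be deduced from Theorem \ref{thm102} by inserting $\log p_n\le\log n+\log\log n+\tfrac{\log\log n}{\log n}$: that substitution would require the coefficient of $e/\log^{2}p_n$ there to exceed $0.7$, whereas Theorem \ref{thm102} supplies only $0.61$; in fact the implication runs the other way, Theorem \ref{thm102} being a consequence of the present result.
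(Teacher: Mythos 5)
Your skeleton is the paper's: start from \eqref{1.1}, bound $D(n)$ and $\log(1+2R(n)/p_n)$ from below separately by explicit functions of $n$ (the paper's Proposition \ref{prop306} and Proposition \ref{prop501}, themselves built from explicit $\pi$-, $\vartheta$- and $\sum p_k$-estimates exactly as you describe), add, undo the logarithm, and finish the range $139\le n<n_0$ by machine. The paper's proof of Theorem \ref{thm601} is precisely this, so conceptually you are on target, and your side remarks (the identity $1+2R(n)/p_n=\tfrac{2}{np_n}\sum_{k\le n}p_k$, the one-to-one sensitivity of $\log t-t$ to errors in $t=p_n/n$, and the fact that Theorem \ref{thm102} is deduced \emph{from} this theorem via \eqref{3.17}) are all correct.

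There is, however, one step that fails quantitatively as written: the reduction via $\log(1+u)\le u$, i.e.\ $e^x\ge 1+x$. That linearization forces you to prove $D(n)+\log(1+2R(n)/p_n)>1+\tfrac{1}{2L}-\tfrac{M-2.8}{2L^2}$ with the constant $2.8$ already present, and the unconditionally available inputs do not deliver it. Your accounting assumes $c_1$ can be taken near $3$; but the constant $3$ in $D(n)>1+1/\log p_n+3/\log^2 p_n$ is only known for $n\le\pi(10^{19})$ and for astronomically large $n$ (Corollary \ref{kor303}) — in between it is Conjecture \ref{conj304}, and the best unconditional input is the constant $2.7$ of Proposition \ref{prop301}, which after conversion to $\log n$ yields Proposition \ref{prop306} with $c_1=2.5$. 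Meanwhile the lower bound for $\log(1+2R(n)/p_n)$ in Proposition \ref{prop501} carries a negative $(\log\log n)^2$-term whose absorption costs about $0.15$, so $c_2^{\mathrm{eff}}\approx 2.4$ and $2c_1-c_2\approx 2.6<2.8$ in the critical regime. The paper closes exactly this gap by exponentiating with $e^x\ge 1+x+x^2/2$ rather than $e^x\ge 1+x$: the quadratic term contributes $\tfrac{1}{8\log^2 n}=\tfrac{0.25}{2\log^2 n}$, lifting $2.6$ to $2.85$, after which $0.05$ is sacrificed to absorb the cross terms, landing on $2.8$. Without retaining that second-order term (or strengthening the $D(n)$ input beyond what is unconditionally known), your argument proves the theorem only with a constant around $2.6$ in place of $2.8$.
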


\begin{thm}[See Corollary \ref{kor605}] \label{thm105}
For every positive integer $n \geq 2$, we have
\begin{displaymath}
\frac{A_n}{G_n} < \frac{e}{2} + \frac{e}{4\log n} - \frac{e( \log \log n - 6.44)}{4 \log^2n}.
\end{displaymath}
\end{thm}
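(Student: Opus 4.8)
The plan is to derive Theorem \ref{thm105} from the explicit $p_n$-bound of Theorem \ref{thm103} by inserting a lower estimate for $p_n$ in terms of $n$, and to settle the remaining small values of $n$ by a direct computation.

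First I would treat $n \geq 294\,635$, where Theorem \ref{thm103} applies. Put $\ell = \log\log n$ and $u = \ell/\log n$. Rosser's bound $p_n > n\log n$ (a sharper explicit lower bound for $p_n$ would only give more room) yields $\log p_n \geq \log n + \ell = (\log n)(1+u)$, and since $\frac{e}{2}+\frac{e}{4\log p_n}+\frac{1.52\,e}{\log^2 p_n}$ is decreasing in $\log p_n$,
\[
\frac{A_n}{G_n} < \frac{e}{2} + \frac{e}{4(\log n)(1+u)} + \frac{1.52\,e}{(\log^2 n)(1+u)^2}.
\]
Next I would expand the two fractions using the elementary inequalities $\tfrac1{1+u}\leq 1-u+u^2$ (valid for all $u\geq 0$) and $\tfrac1{(1+u)^2}\leq 1-u$ (valid for $0\leq u\leq \tfrac{\sqrt{5}-1}{2}$, which is satisfied here since $u=\log\log n/\log n$ is decreasing for $n\geq 16$ and $u<0.21$ at $n=294\,635$). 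Using $u/\log n=\ell/\log^2 n$ and $u^2/\log n=u\ell/\log^2 n$, this turns the display into
\[
\frac{A_n}{G_n} < \frac{e}{2} + \frac{e}{4\log n} - \frac{e\ell}{4\log^2 n} + \frac{e}{\log^2 n}\Bigl(\frac{u\ell}{4}+1.52-1.52\,u\Bigr).
\]
Since $6.44/4=1.61$, the target bound reads $\frac{e}{2}+\frac{e}{4\log n}-\frac{e\ell}{4\log^2 n}+\frac{1.61\,e}{\log^2 n}$, so it remains to check $\frac{u\ell}{4}-1.52\,u\leq 0.09$; multiplying by $\log n$ and using $u\log n=\ell$ this becomes
\[
\frac{(\log\log n)^2}{4} - 1.52\log\log n \leq 0.09\log n .
\]
The left side equals $\tfrac14\log\log n\,(\log\log n-6.08)$, hence is $\leq 0$ as long as $\log\log n\leq 6.08$, and for the astronomically large remaining $n$ the right side dominates $\tfrac14(\log\log n)^2$ with enormous margin; so the inequality holds and all $n\geq 294\,635$ are done.

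It then remains to verify the statement for the integers $2\leq n<294\,635$, which is a finite check: for each such $n$ one evaluates $\log(A_n/G_n)=\log\bigl(\tfrac1n\sum_{k\leq n}p_k\bigr)-\tfrac1n\sum_{k\leq n}\log p_k$ (so that no product of primes has to be formed) and compares it with the logarithm of the right-hand side; alternatively the cruder explicit estimates for $D(n)$ and $\log(1+2R(n)/p_n)$, together with explicit bounds for $p_n$, may be invoked on part of this range. The one point requiring care is the middle step: the slack left after passing from $\log p_n$ to $\log n$ is only $0.09\,e/\log^2 n$, so the lower bound for $p_n$ must be good enough and the expansion carried far enough that the whole conversion error fits inside it. In particular the contribution $-1.52\,u\cdot e/\log^2 n$ coming from the second fraction is essential — discarding it would leave only $\frac{u\ell}{4}\leq 0.09$, i.e. $(\log\log n)^2\leq 0.36\log n$, which fails at $n=294\,635$ — while a sharper $p_n$-estimate would permit a smaller constant than $6.44$ on $n\geq 294\,635$, though not below the value forced by the direct check over the small range.
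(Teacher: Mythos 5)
Your proposal is correct and follows the same overall skeleton as the paper's proof of Corollary \ref{kor605}: start from the bound of Theorem \ref{thm604} for $n \geq 294\,635$, convert it from $\log p_n$ to $\log n$, and settle the remaining range by direct computation. The conversion step, however, is genuinely different. The paper substitutes the ready-made inequality \eqref{3.17} for $1/\log p_n$ into Theorem \ref{thm604}, bounds $1.52e/\log^2 p_n$ crudely by $1.52e/\log^2 n$ (which is where the $6.08$ in \eqref{6.2} comes from), and then absorbs the lower-order terms via \eqref{3.19}; since \eqref{3.19} is only available for $x \geq 1\,499\,820\,545$, the paper must verify the inequality by computer for all $2 \leq n \leq 1\,499\,820\,544$. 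You instead use only the elementary bound $p_n > n\log n$ together with $1/(1+u) \leq 1-u+u^2$ and $1/(1+u)^2 \leq 1-u$, and — crucially — you retain the negative cross term $-1.52u$ coming from the $\log^{-2}p_n$ term; this reduces everything to $\tfrac{1}{4}\log\log n\,(\log\log n - 6.08) \leq 0.09\log n$, which is trivially true until $\log\log n$ exceeds $6.08$ and is dominated by the exponentially growing right-hand side beyond that point, so all $n \geq 294\,635$ are covered in one stroke. Both routes land on the same constant $6.44$; yours shrinks the finite verification to $2 \leq n < 294\,635$ at the cost of a slightly more delicate hand expansion, while the paper's leans on the machinery of \eqref{3.17} and \eqref{3.19} and pays for it with a computation three to four orders of magnitude larger.
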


In particular, we prove several conjectures concerning $D(n)$, $G_n$ and the ratio of $A_n$ and $G_n$ stated by Hassani \cite{hassani2013} in 2013. For
instance, we use Theorem \ref{thm102} to show that the ratio of $A_n$ and $G_n$ is always greater than $e/2$.

\section{Several asymptotic formulae}

Let $\pi(x)$ denotes the number of primes not exceeding $x$. In this section, we give some asymptotic formulae for the quantities $D(n)$, $G_n$, $A_n$, the 
ratio of $A_n$ and $G_n$ and finally for $\log(1+2R(n)/p_n)$. For this purpose, an asymptotic formula for the prime counting function $\pi(x)$ plays an 
important role.

\subsection{Two asymptotic formulae for $D(n)$}

In order to find the first asymptotic formula for
\begin{displaymath}
D(n) = \log p_n - \frac{\vartheta(p_n)}{n}
\end{displaymath}
in terms of $p_n$, we introduce the following definition.

\begin{defi}
Let $m$ be a positive integer. The positive integers $k_1, \ldots, k_m$ are defined by the recurrence formula
\begin{equation}
k_m + 1!k_{m-1} + 2!k_{m-2} + \ldots + (m-1)!k_1 = m \cdot m!. \tag{2.1} \label{2.1}
\end{equation}
In particular, $k_1 = 1$, $k_2 = 3$, $k_3 = 13$ and $k_4 = 71$.
\end{defi}

Then, we obtain the following result.

\begin{prop} \label{prop201}
Let $r$ be a non-negative integer. Then
\begin{displaymath}
D(n) = 1 + \frac{k_1}{\log p_n} + \frac{k_2}{\log^2p_n} + \ldots +  \frac{k_r}{\log^r p_n} + O \left( \frac{1}{\log^{r+1}p_n}  \right).
\end{displaymath}
\end{prop}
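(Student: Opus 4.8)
The plan is to turn $D(n)$ into an integral and then read off the coefficients from the asymptotic expansion of $\pi(x)$. The key first step is partial (Abel) summation: since $\vartheta(p_n)=\sum_{p\le p_n}\log p$ and $\pi(p_n)=n$, one has
\[
\vartheta(p_n)=\pi(p_n)\log p_n-\int_2^{p_n}\frac{\pi(t)}{t}\,dt=n\log p_n-\int_2^{p_n}\frac{\pi(t)}{t}\,dt,
\]
so that dividing by $n$ gives the clean identity
\[
D(n)=\log p_n-\frac{\vartheta(p_n)}{n}=\frac1n\int_2^{p_n}\frac{\pi(t)}{t}\,dt.
\]
Everything then reduces to expanding the numerator integral and the denominator $n=\pi(p_n)$ in powers of $1/\log p_n$ and dividing.

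For the numerator I would insert the classical expansion $\pi(t)=\frac{t}{\log t}\sum_{k=0}^{r}\frac{k!}{\log^k t}+O\!\bigl(t/\log^{r+2}t\bigr)$ (valid for $t$ large; the contribution of an initial segment $[2,T]$ with $T$ fixed is $O(1)$ and goes into the error term). Integrating $\pi(t)/t=\sum_{k=0}^{r}k!/\log^{k+1}t+O(1/\log^{r+2}t)$ term by term, the error integrates to $O\!\bigl(p_n/\log^{r+2}p_n\bigr)$, while each power is handled by the integration-by-parts recursion $\int_2^x\frac{dt}{\log^{j}t}=\frac{x}{\log^{j}x}-\frac{2}{\log^{j}2}+j\int_2^x\frac{dt}{\log^{j+1}t}$, iterated down to exponent $r+1$. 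Collecting terms (the finitely many constants $2/\log^j 2$ are $O(1)$ and absorbed into the error) yields $\int_2^{p_n}\frac{\pi(t)}{t}\,dt=\sum_{j=1}^{r+1}\frac{j!\,p_n}{\log^j p_n}+O\!\bigl(p_n/\log^{r+2}p_n\bigr)$; one can also see the coefficients must be $j!$ by differentiating the ansatz and matching against $\pi(x)/x$. The same expansion of $\pi$ gives $n=\pi(p_n)=\sum_{j=1}^{r+1}\frac{(j-1)!\,p_n}{\log^j p_n}+O\!\bigl(p_n/\log^{r+2}p_n\bigr)$.

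Finally I would divide. Factoring $p_n/\log p_n$ out of numerator and denominator and writing $L=\log p_n$,
\[
D(n)=\frac{\sum_{i=0}^{r}(i+1)!/L^i+O(1/L^{r+1})}{\sum_{i=0}^{r}i!/L^i+O(1/L^{r+1})},
\]
and since the denominator equals $1+O(1/L)$ this quotient expands, via a geometric bound on the remainder, as $1+k_1/L+\dots+k_r/L^r+O(1/L^{r+1})$, where the $k_i$ (with $k_0=1$) are forced by $\sum_{i=0}^m k_i\,(m-i)!=(m+1)!$ for every $m$. The last point is that this is exactly the recurrence \eqref{2.1}: separating the $i=0$ term gives $k_m+1!k_{m-1}+\dots+(m-1)!k_1=(m+1)!-m!=m\cdot m!$. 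Hence the $k_i$ produced here coincide with those defined in \eqref{2.1}, which proves the Proposition. The only genuine work is in the last two steps: keeping the truncation error at the right order through the term-by-term integration and the series division, and verifying the equivalence $\sum_{i=0}^m k_i(m-i)!=(m+1)!\Leftrightarrow\eqref{2.1}$ — all of which is routine bookkeeping rather than a real obstacle.
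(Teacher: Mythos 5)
Your proof is correct, but it takes a genuinely different route from the paper. The paper never touches an integral: it quotes Panaitopol's ready-made expansion $\log x = x/\pi(x) + 1 + \sum_{i\le r} k_i/\log^i x + O(x/(\pi(x)\log^{r+2}x))$ (whose coefficients are \emph{defined} by the recurrence \eqref{2.1}), sets $x=p_n$ so that $x/\pi(x)=p_n/n$, and then separately invokes a strong form of the Prime Number Theorem for Chebyshev's function, $\vartheta(x)=x+O(x\exp(-c\log^{1/10}x))$, to replace $\vartheta(p_n)/n$ by $p_n/n + O(1/\log^{r+1}p_n)$. You instead use the exact Abel-summation identity $D(n)=\tfrac1n\int_2^{p_n}\pi(t)\,dt/t$, which eliminates $\vartheta$ from the problem entirely, and then derive everything --- including the recurrence $k_m+1!k_{m-1}+\dots+(m-1)!k_1=m\cdot m!$ --- from the single input $\pi(t)=\tfrac{t}{\log t}\sum_{k\le r}k!/\log^k t+O(t/\log^{r+2}t)$ via term-by-term integration and series division. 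Your bookkeeping checks out: the coefficient of $p_n/\log^j p_n$ in the integral is $\sum_{k=0}^{j-1}(j-1)!=j!$, and matching $\bigl(\sum_i k_i L^{-i}\bigr)\bigl(\sum_i i!\,L^{-i}\bigr)=\sum_i (i+1)!\,L^{-i}+O(L^{-r-1})$ at order $m$ gives $(m+1)!-m!=m\cdot m!$, which is exactly \eqref{2.1}. What your approach buys is self-containedness (one PNT-type input instead of two cited results, and in effect a re-proof of Panaitopol's formula) plus the pleasant exact identity $D(n)=\tfrac1n\int_2^{p_n}\pi(t)\,dt/t$, which makes $D(n)>0$ obvious; what the paper's approach buys is brevity and a form that transfers directly to the explicit, numerically effective estimates it needs in Section~3.
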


\begin{proof}
The proof of the required asymptotic formula for $D(n)$ consists of two steps. First, we find an asymptotic formula for $\log x$. Using a result shown by
Panaitopol \cite{pan3}, we get
% \begin{equation}
% \pi(x) = \frac{x}{ \log x - 1 - \frac{k_1}{\log x} - \frac{k_2}{\log^2x} - \ldots -  \frac{k_{r+1}}{\log^{r+1} x} } + O \left( \frac{x}{\log^{r+3}x}  \right).
% \tag{2.2} \label{2.2}
% \end{equation}
% Hence,
\begin{equation}
\log x = \frac{x}{\pi(x)} + 1 + \frac{k_1}{\log x} + \frac{k_2}{\log^2x} + \ldots +  \frac{k_r}{\log^rx} + O \left( \frac{x}{\pi(x)\log^{r+2}x}  \right). 
\tag{2.2} \label{2.2}
\end{equation}
%Further, \eqref{2.2} implies that
%\begin{equation}
%\lim_{x \to \infty} \frac{\pi(x)}{x/\log x} = 1, \tag{2.4} \label{2.4}
%\end{equation}
The Prime Number Theorem states that $\pi(x) \sim x/\log x$ for $x \to \infty$. So, we can simplify the error term in \eqref{2.2} to obtain
\begin{equation}
\log x = \frac{x}{\pi(x)} + 1 + \frac{k_1}{\log x} + \frac{k_2}{\log^2x} + \ldots +  \frac{k_r}{\log^rx} + O \left( \frac{1}{\log^{r+1}x}  \right). \tag{2.3} 
\label{2.3}
\end{equation}
Next, we establish an asymptotic formula for $\vartheta(p_n)/n$. A well-known asymptotic formula for Chebyshev's $\vartheta$-function is given by $\vartheta(x) 
= x + O (x\exp(-c \log^{1/10}x) )$, where $c$ is an absolute positive constant (cf. Brüdern \cite[p. 41]{bruedern}). Now, the Prime Number Theorem and the 
fact that $\exp(-c \log^{1/10}x) = O(1/\log^s x)$ for every positive integer $s$ indicate that
%\begin{equation}
%\vartheta(x) = x + O \left( \frac{x}{\log^{r+2}x} \right). \tag{2.6} \label{2.6}
%\end{equation}
%From \eqref{2.4} follows that
%\begin{equation}
%\lim_{n \to \infty} \frac{n}{p_n/\log p_n} = 1 \tag{2.7} \label{2.7}
%\end{equation}
%and combined with \eqref{2.6}, we get
\begin{equation}
\frac{\vartheta(p_n)}{n} = \frac{p_n}{n} + O \left( \frac{1}{\log^{r+1} p_n} \right). \tag{2.4} \label{2.4}
\end{equation}
Combined with \eqref{2.3} and the definition of $D(n)$ we conclude the proof.
\end{proof}

Next, we establish an asymptotic formula for the quantity $D(n)$ in terms of $n$. In order to do this, we first note two useful results of Cipolla \cite{cp}
from 1902 concerning asymptotic formulae for the $n$th prime number $p_n$ and $\log p_n$. Here, $\emph{lc}(P)$ denotes the leading coefficient of a polynomial 
$P \in \R[x]$. 

\begin{lem}[Cipolla, \cite{cp}] \label{lem202}
Let $m$ be a positive integer. Then there exist uniquely determined polynomials $Q_1, \ldots, Q_m \in \Z[x]$ with $\emph{deg}(Q_k) = k$ and $lc(Q_k) = (k-1)!$,
so that
\begin{displaymath}
p_n = n \left( \log n + \log \log n - 1 + \sum_{k=1}^m \frac{(-1)^{k+1}Q_k(\log \log n)}{k!\log^kn} \right) + O\left( \frac{n (\log \log n)^{m+1}}{\log^{m+1}n}
\right).
\end{displaymath}
The polynomials $Q_k$ can be computed explicitly. In particular, $Q_1(x) = x - 2$, $Q_2(x) = x^2 - 6x + 11$ and $Q_3(x) = 2x^3 - 21x^2 + 84x - 131$.
\end{lem}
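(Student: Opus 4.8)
The plan is to obtain $p_n$ by inverting the asymptotic expansion of $\pi$, using the exact identity $\pi(p_n) = n$. Concretely, \eqref{2.3} evaluated at $x = p_n$ already does half the work: since $\pi(p_n) = n$, it gives, for every non-negative integer $r$,
\[
\frac{p_n}{n} = \log p_n - 1 - \frac{k_1}{\log p_n} - \frac{k_2}{\log^2 p_n} - \cdots - \frac{k_r}{\log^r p_n} + O\!\left(\frac{1}{\log^{r+1}p_n}\right),
\]
with the integers $k_i$ from \eqref{2.1}. Thus $p_n$ is already determined in terms of $n$ and $\log p_n$, and the whole task reduces to expressing $\log p_n$ itself through $\log n$ and $\log\log n$.

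For that I would bootstrap. By the Prime Number Theorem $p_n \sim n\log n$, so $\log p_n = \log n + \log\log n + o(1)$. Feeding this into the identity $\log p_n = \log n + \log(p_n/n)$ together with the displayed expansion of $p_n/n$, and using that the outer logarithm divides any error in $\log p_n$ by a factor of order $\log n$, each round of substitution produces one further term of an expansion
\[
\log p_n = \log n + \log\log n + \sum_{k=1}^{m}\frac{\widetilde P_k(\log\log n)}{\log^k n} + O\!\left(\frac{(\log\log n)^{m+1}}{\log^{m+1}n}\right),
\]
where, via $\log(1+u) = u - u^2/2 + u^3/3 - \cdots$ with $u = (\log\log n - 1 + \cdots)/\log n$, one finds $\widetilde P_k \in \Q[x]$ with $\deg\widetilde P_k = k$ and $\mathrm{lc}(\widetilde P_k) = (-1)^{k+1}/k$. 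This intermediate estimate is Cipolla's companion formula for $\log p_n$ and is best recorded as a separate lemma.

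It then remains to substitute this expansion of $\log p_n$ back into that of $p_n/n$, expand each $\log^{-j}p_n$ by a geometric series, collect everything into the asymptotic scale $\{(\log\log n)^a/\log^b n : 0 \le a \le b\}$, and multiply through by $n$; taking $r = m$ makes the two error terms combine into the stated $O(n(\log\log n)^{m+1}/\log^{m+1}n)$. Writing the resulting coefficient of $1/\log^k n$ as $(-1)^{k+1}Q_k(\log\log n)/k!$, its part of top degree in $\log\log n$ comes entirely from $\widetilde P_k$, whence $\deg Q_k = k$ and $\mathrm{lc}(Q_k) = k!/k = (k-1)!$; and because the functions $(\log\log n)^a/\log^b n$ are asymptotically independent, the polynomials $Q_k$, and with them the whole expansion, are unique. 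Finally the explicit values $Q_1(x) = x - 2$, $Q_2(x) = x^2 - 6x + 11$ and $Q_3(x) = 2x^3 - 21x^2 + 84x - 131$ drop out by carrying the computation through by hand for $k = 3$.

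The main obstacle is the combinatorial bookkeeping. To make the bootstrap rigorous one needs an induction that simultaneously controls, at every stage, the exact degree and leading coefficient of the coefficient polynomials (not just upper bounds) and keeps the error terms uniform in $n$, and one must check that the powers of $\log\log n$ accumulated along the way never outpace the gain of one factor $1/\log n$ per round. Verifying that every $Q_k$ --- not merely the three written out above --- lies in $\Z[x]$ is the second delicate point; it follows from the integrality of the $k_i$ given by \eqref{2.1} once one keeps track of the denominators occurring in the intermediate expansion of $\log p_n$.
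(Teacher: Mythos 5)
The paper offers no proof of Lemma \ref{lem202} to compare yours against: the statement is imported verbatim from Cipolla \cite{cp}, with the proof delegated to that 1902 paper. Your sketch is nonetheless the standard route, and essentially Cipolla's own: invert $\pi(p_n)=n$ through the expansion \eqref{2.3}, bootstrap $\log p_n=\log n+\log\log n+\cdots$ (you have correctly recognised that this intermediate expansion deserves to be a separate lemma --- it is exactly Lemma \ref{lem203}), resubstitute, and read off the coefficient of $1/\log^k n$. Your structural bookkeeping is also right: the part of $Q_k$ of top degree in $\log\log n$ can only come from the $k$th term of the $\log p_n$ expansion, since each $k_j/\log^j p_n$ contributes to the coefficient of $1/\log^k n$ only polynomials of degree at most $k-j\le k-1$, which yields $\mathrm{lc}(Q_k)=k!\cdot\tfrac{1}{k}=(k-1)!$ as you claim, and uniqueness does follow from the asymptotic independence of the scale $(\log\log n)^a/\log^b n$. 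What you have is still a scheme rather than a proof: the induction that controls degrees, leading coefficients and uniform error terms, and above all the integrality $Q_k\in\Z[x]$ (the denominators $1/k$ from $\log(1+u)$ and the binomial coefficients from expanding $\log^{-j}p_n$ must all be shown to be absorbed into the single factor $1/k!$), are named but not executed. For a century-old theorem quoted with attribution that is an acceptable level of detail; a self-contained proof would require carrying out that induction in full.
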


\begin{lem}[Cipolla, \cite{cp}] \label{lem203}
Let $m$ be a positive integer. Then there exist uniquely determined polynomials $R_1, \ldots, R_m \in \Z[x]$ with $\emph{deg}(R_k) = k$ and $lc(R_k) = (k-1)!$,
so that
\begin{displaymath}
\log p_n = \log n + \log \log n + \sum_{k=1}^m \frac{(-1)^{k+1}R_k(\log \log n)}{k!\log^kn} + O\left( \frac{(\log \log n)^{m+1}}{\log^{m+1}n} \right).
\end{displaymath}
The polynomials $R_k$ can be computed explicitly. In particular, $R_1(x) = x - 1$, $R_2(x) = x^2 - 4x + 5$ and $R_3(x) = 2x^3 - 15x^2 + 42x - 47$.
\end{lem}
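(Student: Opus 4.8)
The plan is to obtain Lemma~\ref{lem203} directly from Cipolla's expansion of $p_n$ in Lemma~\ref{lem202} by taking a logarithm. Write $L=\log n$ and $\ell=\log\log n$. Lemma~\ref{lem202} gives
\[
p_n=n\left(L+\ell-1+\sum_{k=1}^{m}\frac{(-1)^{k+1}Q_k(\ell)}{k!\,L^{k}}+O\!\left(\frac{\ell^{\,m+1}}{L^{m+1}}\right)\right),
\]
so dividing by $n$ and factoring $L$ out of the bracket yields
\[
\log p_n=L+\log L+\log(1+u),\qquad u=\frac{\ell-1}{L}+\sum_{k=1}^{m}\frac{(-1)^{k+1}Q_k(\ell)}{k!\,L^{k+1}}+O\!\left(\frac{\ell^{\,m+1}}{L^{m+2}}\right).
\]
Since $u=O(\ell/L)\to0$, I would substitute this into $\log(1+u)=\sum_{i\ge1}\tfrac{(-1)^{i+1}}{i}u^{i}$, expand, and collect the result as a power series in $1/L$ whose coefficients are polynomials in $\ell$ with rational coefficients, with remainder $O(\ell^{\,m+1}/L^{m+1})$. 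Declaring the coefficient of $L^{-k}$ in $\log(1+u)$ to be $(-1)^{k+1}R_k(\ell)/k!$ and using $\log L=\ell$ then produces exactly the asserted formula.

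The structural assertions follow from bookkeeping on $\ell$-degrees. Uniqueness of the $R_k$ is clear, since the functions $\ell^{j}L^{-k}$ form an asymptotic scale, so the polynomial coefficients of an asymptotic expansion of $\log p_n-L-\ell$ in this scale are forced. For the degree and leading coefficient, write $u=\sum_{j\ge1}c_j(\ell)L^{-j}$ with $c_1(\ell)=\ell-1$ of degree $1$ and $c_j(\ell)=(-1)^{j}Q_{j-1}(\ell)/(j-1)!$ of degree $j-1$ for $j\ge2$; the coefficient of $L^{-k}$ in $\log(1+u)$ is a combination of monomials $c_{j_1}(\ell)\cdots c_{j_i}(\ell)$ with $j_1+\ldots+j_i=k$. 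Because $\deg c_j\le j$ with equality only for $j=1$, the unique such monomial of maximal $\ell$-degree is $(\ell-1)^{k}$, arising from $u^{k}$ with weight $(-1)^{k+1}/k$; hence the coefficient of $L^{-k}$ equals $\tfrac{(-1)^{k+1}}{k}\ell^{k}$ plus lower-degree terms, giving $\deg(R_k)=k$ and $lc(R_k)=k!/k=(k-1)!$. That the remaining coefficients of $R_k$ are integers does not come for free from this counting; I would deduce it either from Cipolla's original treatment or by an induction checking that the $1/i$ denominators in the log-series are exactly cancelled, using the integrality of the $Q_k$.

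The one genuinely delicate point is the error term. I must verify that truncating the outer series after $u^{m}$, truncating $u$ after its $L^{-(m+1)}$ term, and carrying through the $O(\ell^{\,m+1}/L^{m+1})$ error already present in Lemma~\ref{lem202} each contribute no more than $O(\ell^{\,m+1}/L^{m+1})$: the tail of $u$ together with the inherited error is $O(\ell^{\,m+1}/L^{m+1})$ after multiplication by bounded factors, while $u^{m+1}=O((\ell/L)^{m+1})$. Assembling these estimates gives the stated remainder. As a consistency check, the linear term $c_1$ reproduces $R_1(x)=x-1$, and a short computation with $Q_1$ and $Q_2$ gives $R_2(x)=x^2-4x+5$ and $R_3(x)=2x^3-15x^2+42x-47$, in agreement with the lemma.
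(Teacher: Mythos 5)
Your derivation is correct, but it is worth noting that the paper does not prove Lemma~\ref{lem203} at all: both Lemma~\ref{lem202} and Lemma~\ref{lem203} are simply quoted from Cipolla's 1902 paper. What you supply is a genuine reduction of the second cited result to the first, by writing $\log p_n=\log n+\log\log n+\log(1+u)$ with $u=(\ell-1)/L+\sum_k(-1)^{k+1}Q_k(\ell)/(k!\,L^{k+1})+O(\ell^{m+1}/L^{m+2})$ and expanding the logarithm. Your degree bookkeeping is right: since $\deg c_j=j-1<j$ for $j\ge 2$ and $\deg c_1=1$, the only product $c_{j_1}\cdots c_{j_i}$ with $j_1+\cdots+j_i=k$ attaining $\ell$-degree $k$ is $c_1^{k}=(\ell-1)^k$ from $u^k$, giving leading term $(-1)^{k+1}\ell^k/k$ and hence $\deg R_k=k$, $lc(R_k)=(k-1)!$; the error analysis ($u^{m+1}=O((\ell/L)^{m+1})$, inherited error $O(\ell^{m+1}/L^{m+2})$) is also sound, and your values of $R_1,R_2,R_3$ check out against the explicit $Q_1,Q_2$. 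The one point you rightly flag as not following from this argument is the integrality of the lower-order coefficients of $R_k$ (the $1/i$ and $1/(j-1)!$ denominators must cancel after multiplication by $k!$); since the paper leans on the citation for the whole lemma, deferring that single point to Cipolla is acceptable, but your proof is otherwise self-contained given Lemma~\ref{lem202}. One caveat worth keeping in mind: in Cipolla's own development the expansions of $p_n$ and of $\log p_n$ are obtained together by iterated substitution, so if one were building both lemmas from scratch your direction of deduction (from $p_n$ to $\log p_n$) is the easy half and does not by itself establish Lemma~\ref{lem202}.
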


Now, we give another asymptotic formula for the quantity $D(n)$.

\begin{prop} \label{prop204}
Let $r$ be a positive integer and let $T_k(x) = R_k(x) - Q_k(x)$ for $1 \leq k \leq r$. Then, $\emph{deg}(T_k) = k-1$, $lc(T_k) = k!$ and 
\begin{displaymath}
D(n) = 1 + \sum_{k=1}^r \frac{(-1)^{k+1}T_k(\log \log n)}{k!\log^kn} + O\left( \frac{(\log \log n)^r}{\log^{r+1}n} \right).
\end{displaymath}
In particular, $T_1(x) = 1$, $T_2(x) = 2x - 6$ and $T_3(x) = 6x^2 - 42x + 84$.
\end{prop}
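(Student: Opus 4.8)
The plan is to obtain the formula by subtracting the two Cipolla expansions of Lemmas~\ref{lem202} and~\ref{lem203}, after first discarding the $\vartheta$-contribution. Concretely, I would start from $D(n) = \log p_n - \vartheta(p_n)/n$ and use \eqref{2.4} to write $\vartheta(p_n)/n = p_n/n + O(1/\log^{r+1}p_n)$; since $\log p_n \ge \log n$, this error is $O(1/\log^{r+1}n)$ and is therefore swallowed by the target remainder $O((\log\log n)^r/\log^{r+1}n)$. So it suffices to expand $\log p_n - p_n/n$ in terms of $n$.

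Here comes the one point that needs care: applying Lemmas~\ref{lem202} and~\ref{lem203} with index $r$ would only give a remainder $O((\log\log n)^{r+1}/\log^{r+1}n)$, which is weaker than claimed, so I would instead apply both lemmas with index $r+1$. Subtracting, the leading blocks $\log n + \log\log n$ cancel and the constant $-1$ coming from Lemma~\ref{lem202} turns into the leading $+1$, leaving
\[
D(n) = 1 + \sum_{k=1}^{r+1}\frac{(-1)^{k+1}(R_k - Q_k)(\log\log n)}{k!\log^k n} + O\!\left(\frac{(\log\log n)^{r+2}}{\log^{r+2}n}\right).
\]
Now set $T_k = R_k - Q_k$. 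Since $\deg R_k = \deg Q_k = k$ with $lc(R_k)=lc(Q_k)=(k-1)!$, the degree-$k$ parts cancel, so $\deg T_k \le k-1$ and $T_k(\log\log n) = O((\log\log n)^{k-1})$; in particular the $k=r+1$ summand is $O((\log\log n)^r/\log^{r+1}n)$ and may be absorbed into the remainder, as may $O((\log\log n)^{r+2}/\log^{r+2}n)$ because $(\log\log n)^2 = o(\log n)$. This is exactly the asserted expansion, and the stated $T_1, T_2, T_3$ drop out of the explicit $R_k, Q_k$ recorded in the two lemmas.

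It remains to show $lc(T_k)=k!$ (which in turn forces $\deg T_k = k-1$). Since no closed form for $R_k$ or $Q_k$ is at hand, I would pin this down by comparing with Proposition~\ref{prop201}: substitute the expansion of $\log p_n$ from Lemma~\ref{lem203} into $D(n) = 1 + \sum_{j\ge 1} k_j/\log^j p_n + O(\cdot)$ and read off the coefficient of $(\log\log n)^{k-1}/\log^k n$. Because $1/\log^j p_n = (\log n)^{-j}\bigl(1 + \log\log n/\log n + \cdots\bigr)^{-j}$, the maximal power $(\log\log n)^{k-1}$ at order $\log^{-k}n$ can only be produced by the $j=1$ term, and it is produced with coefficient $k_1\cdot(-1)^{k-1} = (-1)^{k-1}$; matching this against the coefficient $(-1)^{k+1}lc(T_k)/k!$ coming from the expansion above forces $lc(T_k)=k!$.

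The main obstacle is not conceptual but bookkeeping: one must resist expanding only to order $r$ (the naive choice), push one step further, and then exploit $\deg T_{r+1}\le r$ together with $(\log\log n)^2 = o(\log n)$ to recover the sharp remainder $O((\log\log n)^r/\log^{r+1}n)$; and one must locate $lc(T_k)$ indirectly, through the already-established first expansion of $D(n)$, rather than from any explicit description of the Cipolla polynomials. Everything else is routine polynomial arithmetic.
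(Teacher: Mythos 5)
Your derivation of the expansion itself follows the paper's route exactly: discard the $\vartheta$-contribution via \eqref{2.4}, subtract the Cipolla expansions of Lemmas~\ref{lem202} and~\ref{lem203} taken one order deeper than $r$, and then use $\deg T_{r+1}\le r$ together with $(\log\log n)^2=o(\log n)$ to absorb the extra terms into $O((\log\log n)^r/\log^{r+1}n)$; that part is correct and essentially identical. Where you genuinely diverge is in establishing $lc(T_k)=k!$. The paper simply quotes Cipolla's explicit computation of the second-highest coefficients of $Q_k$ and $R_k$ (in the notation of the proof, $-(b_{k,1}-a_{k,1})=k!$), whereas you deduce the value indirectly: you compare the $T$-expansion of $D(n)$ with the already-proved expansion $D(n)=1+\sum_j k_j/\log^j p_n+O(\cdot)$ of Proposition~\ref{prop201}, convert the latter to powers of $1/\log n$ via Lemma~\ref{lem203}, and match the coefficient of $(\log\log n)^{k-1}/\log^k n$. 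Your key observation --- that every correction term of $\log p_n/\log n$ beyond $\log\log n/\log n$ carries strictly fewer powers of $\log\log n$ than of $1/\log n$, so the extremal monomial $(\log\log n)^{k-1}/\log^k n$ can only arise from $k_1\cdot(-1)^{k-1}(\log\log n/\log n)^{k-1}/\log n$ --- is correct, and with the uniqueness of asymptotic expansions in the scale $(\log\log n)^i/\log^j n$ it does force $(-1)^{k+1}lc(T_k)/k!=(-1)^{k-1}k_1$, hence $lc(T_k)=k!$ and $\deg T_k=k-1$. The trade-off: the paper's argument is a one-line citation but depends on explicit coefficient data from Cipolla; yours is self-contained given Proposition~\ref{prop201} and incidentally explains the value $k!$ as coming from $k_1=1$, at the cost of a coefficient-extraction argument that you should spell out slightly more carefully (in particular the claim that no product involving the higher $R_s$-terms can reach $\log\log n$-degree $k-1$ at order $\log^{-k}n$).
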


\begin{proof}
Let $1 \leq k \leq r$. Since $\text{deg}(Q_k) = \text{deg}(R_k) = k$ and $lc(Q_k) = lc (R_k) = (k-1)!$, we have $\text{deg}(T_k) \leq k-1$. Following Cipolla
\cite[p. 144]{cp}, we write
\begin{displaymath}
Q_k(x) = (k-1)!x^k - a_{k,1}x^{k-1} + \sum_{j=2}^k (-1)^{j}a_{k,j}x^{k-j}
\end{displaymath}
and
\begin{displaymath}
R_k(x) = (k-1)!x^k - b_{k,1}x^{k-1} + \sum_{j=2}^k (-1)^{j}b_{k,j}x^{k-j},
\end{displaymath}
where $a_{i,j}, b_{i,j} \in Z$. 
%Then
%\begin{displaymath}
%T_k(x) = -(b_{k,1}-a_{k.1})x^{k-1} + \sum_{j = 2}^k (-1)^{j}(b_{k,j} - a_{k,j})x^{k-j}.
%\end{displaymath}
By Cipolla \cite[p. 150]{cp}, we have $-(b_{k,1} - a_{k,1}) = k! \neq 0$. Hence, $\text{deg}(T_k) = k-1$ and $lc(T_k) = k!$. Using \eqref{2.4} and the 
definition of $D(n)$, we get
\begin{displaymath}
D(n) = \log p_n - \frac{p_n}{n} + O \left( \frac{1}{\log^{r+1} p_n} \right).
\end{displaymath}
Now we substitute the asymptotic formulae given in Lemma \ref{lem202} and Lemma \ref{lem203} to obtain that
\begin{displaymath}
D(n) = 1 + \sum_{k=1}^{r+1} \frac{(-1)^{k+1}T_k(\log \log n)}{k!\log^kn} + O\left(\frac{1}{\log^{r+1} p_n} \right).
\end{displaymath}
To complete the proof, it suffices to note that $\text{deg}(T_{r+1}) = r$ and $1/\log^{r+1}p_n = O(1/\log^{r+1}n)$.
\end{proof}

\begin{rema}
Proposition \ref{prop204} improves Hassani's \cite{hassani2013} asymptotic formula $D(n) = 1 + O(1/\log n)$.
%Proposition \ref{prop204} implies that 
%\begin{equation}
%D(n) = 1 + \frac{1}{\log n} - \frac{\log \log n - 3}{\log^2 n} + \frac{(\log \log n)^2 - 7 \log \log n + 14}{\log^3 n} + O \left( \frac{(\log \log 
%n)^3}{\log^4n}  \right), \tag{2.9} \label{2.9}
%\end{equation}
%which precises Hassani's \cite{hassani2013} asymptotic formula for $D(n)$. He found that $D(n) = 1 + O(1/\log n)$.
\end{rema}

\subsection{An asymptotic formula for $G_n$}

Next, we derive an asymptotic formula for $G_n$, the geometric mean of the prime numbers. Using the definition of $G_n$ and $D(n)$, we obtain the identity
\begin{equation}
G_n = \frac{p_n}{e^{D(n)}}. \tag{2.5} \label{2.5}
\end{equation}
Proposition \ref{prop201} implies that $\lim_{n \to \infty} D(n) = 1$. Hence,
\begin{equation}
G_n \sim \frac{p_n}{e} \q\q (n \to \infty), \tag{2.6} \label{2.6}
\end{equation}
which was conjectured by Vrba \cite{rivera} in 2010 and proved by S\'{a}ndor and Verroken \cite[Theorem 2.1]{sandor2011} in 2011. Using \eqref{2.5} and 
Proposition \ref{prop201}, we get the following refinement of \eqref{2.6}. Here, the positive integers $k_1, \ldots, k_r$ are defined by the recurrence 
formula \eqref{2.1}.

\begin{prop} \label{prop205}
Let $r$ be a positive integer. Then,
\begin{equation}
G_n = \frac{p_n}{\exp \left( 1 + \frac{k_1}{\log p_n} + \frac{k_2}{\log^2p_n} + \ldots +  \frac{k_r}{\log^r p_n} \right) } + O \left( \frac{p_n}{\log^{r+1}p_n}
\right). \tag{2.7} \label{2.7}
\end{equation}
\end{prop}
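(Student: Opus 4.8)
The plan is to read the result off from the identity \eqref{2.5}, namely $G_n = p_n/e^{D(n)}$, together with the asymptotic expansion of $D(n)$ supplied by Proposition \ref{prop201}. I would write $P_r(n) = 1 + k_1/\log p_n + k_2/\log^2 p_n + \ldots + k_r/\log^r p_n$ for the intended main term in the exponent, and set $E_r(n) = D(n) - P_r(n)$, so that Proposition \ref{prop201} reads $E_r(n) = O(1/\log^{r+1}p_n)$.

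First I would factor $e^{-D(n)} = e^{-P_r(n)}\,e^{-E_r(n)}$ and control the second factor. Since $E_r(n) \to 0$ as $n \to \infty$, there is an $n_0$ with $|E_r(n)| \leq 1$ for all $n \geq n_0$; for such $n$ the elementary estimate $|e^{-t} - 1| \leq |t|e^{|t|}$ yields $|e^{-E_r(n)} - 1| \leq e\,|E_r(n)| = O(1/\log^{r+1}p_n)$, while for the finitely many $n < n_0$ the bound is trivial. Hence $e^{-E_r(n)} = 1 + O(1/\log^{r+1}p_n)$. Second, I would note that the factor $e^{-P_r(n)}$ is harmless: each term $k_i/\log^i p_n$ tends to $0$, so $P_r(n) \to 1$, and since $p_n \geq 2$ gives $\log p_n \geq \log 2 > 0$, the quantity $P_r(n)$ is bounded on every finite initial range as well; therefore $e^{-P_r(n)}$ lies between two positive constants, in particular $e^{-P_r(n)} = O(1)$. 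Combining the two observations,
\begin{align*}
G_n = \frac{p_n}{e^{P_r(n)}}\,e^{-E_r(n)} = \frac{p_n}{e^{P_r(n)}}\left( 1 + O\!\left( \frac{1}{\log^{r+1}p_n} \right) \right) = \frac{p_n}{e^{P_r(n)}} + O\!\left( \frac{p_n}{\log^{r+1}p_n} \right),
\end{align*}
which is precisely \eqref{2.7}.

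I do not expect any genuine obstacle here; the proposition is essentially a formal consequence of \eqref{2.5} and Proposition \ref{prop201}. The only two points requiring a word of care are that the factor $e^{-P_r(n)}$ does not grow (handled by its boundedness, which rests on $D(n)\to 1$, i.e.\ Proposition \ref{prop201} with $r=0$) and that an $O(1/\log^{r+1}p_n)$ term sitting in the exponent produces a genuinely multiplicative error of the same order (handled by the inequality $|e^{-t}-1| \leq |t|e^{|t|}$). If one wants the error term $O(p_n/\log^{r+1}p_n)$ to hold uniformly down to small $n$, it suffices to observe that on any fixed finite range of $n$ the difference $G_n - p_n/e^{P_r(n)}$ is a bounded quantity, hence absorbed into the $O$-term there as well.
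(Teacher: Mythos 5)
Your argument is correct and is essentially the paper's own proof spelled out in detail: the paper likewise combines the identity \eqref{2.5} with Proposition \ref{prop201} and the observation that $\exp(c/x) = 1 + O(1/x)$, which is exactly your step converting the exponentiated error $E_r(n) = O(1/\log^{r+1}p_n)$ into a multiplicative factor $1 + O(1/\log^{r+1}p_n)$. The only difference is that you make explicit the boundedness of $e^{-P_r(n)}$ and the elementary inequality $|e^{-t}-1|\leq |t|e^{|t|}$, which the paper leaves implicit.
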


\begin{proof}
The claim follows from \eqref{2.5}, Proposition \ref{prop201} and $\exp(c/x) = 1 + O(1/x)$ for every $c \in \R$.
%\begin{displaymath}
%G_n = \frac{p_n}{\exp \left( 1 + \frac{k_1}{\log p_n} + \frac{k_2}{\log^2p_n} + \ldots + \frac{k_r}{\log^r p_n} \right)} \cdot \left( 1 + O \left( 
%\frac{1}{\log^{r+1}p_n}  \right) \right),
%\end{displaymath}
%which completes the proof.
\end{proof}

\begin{rema}
The asymptotic formula \eqref{2.7} was independently found by Kourbatov \cite[Remark (ii)]{kourbatov} in 2016.
\end{rema}

%\subsection{Two asymptotic formulae for $A_n$}

%We start with the following proposition concerning an asymptotic formula for $A_n$, the arithmetic mean of the prime numbers.

%\begin{prop} \label{prop206}
%For each positive integer $m$, we have
%\begin{displaymath}
%A_n = p_n -  \sum_{k=1}^{m-1} (k-1)! \left(1 - \frac{1}{2^k} \right) \frac{p_n^2}{n\log^k p_n} + O \left( \frac{p_n^2}{n\log^m p_n} \right).
%\end{displaymath}
%\end{prop}

%\begin{proof}
%See \cite[Theorem 2]{axler2015}.
%\end{proof}

%Another asymptotic formula for $A_n$ is given as follows.

%\begin{prop} \label{prop207}
%Let $m$ be a positive integer. Then there exist unique monic polynomials $L_s \in \Q[x]$, where $1 \leq s \leq m$ and $\emph{deg}(L_s) = s$, such that
%\begin{displaymath}
%A_n = \frac{n}{2} \left( \log n + \log \log n - \frac{3}{2} + \sum_{s=1}^m \frac{(-1)^{s+1}L_s(\log \log n)}{s\log^s n} \right) + O \left( \frac{n(\log \log 
%n)^{m+1}}{\log^{m+1} n} \right).
%\end{displaymath}
%In particular, $L_1(x) = x - 5/2$ and $L_2(x) = x^2 - 7x + 29/2$.
%\end{prop}

%\begin{proof}
%See \cite[Theorem 1.1]{axler2017}. The polynomials $L_s$ can be computed explicitly by Theorem 2.7 of \cite{axler2017}.
%\end{proof}

\subsection{An asymptotic formula for the ratio of $A_n$ and $G_n$}

Now, we use \eqref{2.5}, Proposition \ref{prop201} and an asymptotic formula for $A_n$ found in \cite[Theorem 2]{axler2015} to prove Theorem \ref{thm101}. Here 
\begin{displaymath}
r_i = (i-1)! \left( 1 - \frac{1}{2^i} \right)
\end{displaymath}
and the positive integers $k_i$ are defined by \eqref{2.1}.

\begin{thm} \label{thm206}
For each positive integer $m$, we have
\begin{displaymath}
\frac{A_n}{G_n} = e\left( \frac{1}{2} + \sum_{i = 1}^m \frac{1}{\log^ip_n} \left( -r_{i+1} + r_i + \sum_{s=1}^{i-1} r_s k_{i-s} \right) \right) \cdot \exp 
\left( \sum_{j = 1}^m\frac{k_j}{\log^jp_n} \right) + O \left( \frac{1}{\log^{m+1}p_n} \right).
\end{displaymath}
\end{thm}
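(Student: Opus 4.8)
The plan is to derive the asymptotic formula for $A_n/G_n$ by combining three ingredients already available: the identity $G_n = p_n/e^{D(n)}$ from \eqref{2.5}, the asymptotic expansion of $D(n)$ in terms of $\log p_n$ from Proposition \ref{prop201}, and an asymptotic formula for $A_n = \frac1n\sum_{k\le n}p_k$ in powers of $1/\log p_n$ as furnished by \cite[Theorem 2]{axler2015}. Writing
\begin{displaymath}
\frac{A_n}{G_n} = \frac{A_n}{p_n}\cdot e^{D(n)},
\end{displaymath}
I would treat the two factors separately and then multiply the resulting series, collecting terms up to order $1/\log^m p_n$ and absorbing everything beyond into the error term.

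First I would record the expansion of $A_n/p_n$. The cited result of \cite{axler2015} gives $A_n$ (equivalently $\sum_{k\le n} p_k$) as an asymptotic series, and after dividing by $p_n$ one obtains, for each fixed $m$,
\begin{displaymath}
\frac{A_n}{p_n} = \frac12 + \sum_{i=1}^m \frac{c_i}{\log^i p_n} + O\!\left(\frac{1}{\log^{m+1}p_n}\right)
\end{displaymath}
for suitable rational coefficients $c_i$; the claim to be verified is that $c_i = -r_{i+1} + r_i + \sum_{s=1}^{i-1} r_s k_{i-s}$ with $r_i = (i-1)!(1 - 2^{-i})$. I expect this identification to be the combinatorial heart of the argument: one must expand the sum-of-primes formula, which naturally involves the factorial-type coefficients and the numbers $k_j$ coming from inverting the logarithmic integral, and check that the convolution of the $r$-sequence with the $k$-sequence produces exactly these $c_i$. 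This is a bookkeeping computation with formal power series, but it is the step where an error is most likely to creep in, so I would carry it out carefully, perhaps verifying the first two or three coefficients against \eqref{1.2} as a sanity check.

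Second, I would expand $e^{D(n)}$. By Proposition \ref{prop201},
\begin{displaymath}
D(n) = 1 + \sum_{j=1}^m \frac{k_j}{\log^j p_n} + O\!\left(\frac{1}{\log^{m+1}p_n}\right),
\end{displaymath}
so $e^{D(n)} = e \cdot \exp\!\big(\sum_{j=1}^m k_j/\log^j p_n\big)\cdot\big(1 + O(1/\log^{m+1}p_n)\big)$, using $e^{x + O(\delta)} = e^x(1 + O(\delta))$ valid since the exponent stays bounded. Multiplying the two expansions,
\begin{displaymath}
\frac{A_n}{G_n} = e\left(\frac12 + \sum_{i=1}^m \frac{c_i}{\log^i p_n}\right)\exp\!\left(\sum_{j=1}^m \frac{k_j}{\log^j p_n}\right) + O\!\left(\frac{1}{\log^{m+1}p_n}\right),
\end{displaymath}
where one checks that the cross terms between the $O(1/\log^{m+1}p_n)$ tails and the bounded factors are themselves $O(1/\log^{m+1}p_n)$, and that truncating the polynomial-in-$1/\log p_n$ times $\exp(\cdots)$ product at degree $m$ again only discards terms of that order. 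Substituting $c_i = -r_{i+1}+r_i+\sum_{s=1}^{i-1}r_s k_{i-s}$ from the first step yields precisely the stated formula. The only genuinely delicate point beyond the coefficient identity is making sure the error terms compose correctly; everything else is a controlled manipulation of truncated asymptotic series.
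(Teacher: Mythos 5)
Your proposal is correct and follows essentially the same route as the paper: the decomposition $A_n/G_n = (A_n/p_n)\,e^{D(n)}$ is exactly the identity \eqref{2.5} combined with Proposition \ref{prop201}, and your coefficient identification $c_i = -r_{i+1}+r_i+\sum_{s=1}^{i-1}r_sk_{i-s}$ is precisely what the paper obtains by substituting the Panaitopol-type expansion \eqref{2.3} (i.e.\ $p_n/n = \log p_n - 1 - \sum_s k_s/\log^s p_n + O(\cdot)$) into the expansion of $A_n$ from \cite[Theorem 2]{axler2015} and collecting the convolution terms. The only detail worth making explicit is that the cited expansion of $A_n$ must be taken out to index $m+1$ with error $O(p_n^2/(n\log^{m+2}p_n))$ so that, after dividing by $p_n$ and using $p_n\sim n\log p_n$, the tail is genuinely $O(1/\log^{m+1}p_n)$.
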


\begin{proof}
By \cite[Theorem 2]{axler2015}, we have
\begin{displaymath}
A_n = p_n -  \sum_{i=1}^{m-1} \frac{r_ip_n^2}{n\log^i p_n} + O \left( \frac{p_n^2}{n\log^m p_n} \right).
\end{displaymath}
Together with \eqref{2.5} and Proposition \ref{prop201}, we obtain
\begin{displaymath}
\frac{A_n}{G_n} = \left( 1 - \sum_{i = 1}^{m+1} \frac{r_ip_n}{n \log^ip_n} + O \left( \frac{p_n}{n \log^{m+2}p_n} \right)\right) \cdot \left( \exp \left( 1 + 
\sum_{j = 1}^m\frac{k_j}{\log^jp_n} \right) + O \left( \frac{1}{\log^{m+1}p_n} \right) \right).
\end{displaymath}
The Prime Number Theorem implies that $p_n \sim n \log p_n$ for $n \to \infty$. This indicates
\begin{equation}
\frac{A_n}{G_n} = e\left( 1 - \sum_{i = 1}^{m+1} \frac{r_ip_n}{n \log^ip_n} + O \left( \frac{1}{ \log^{m+1}p_n} \right)\right) \cdot \exp \left( \sum_{j = 
1}^m\frac{k_j}{\log^jp_n} \right) + O \left( \frac{1}{\log^{m+1}p_n} \right). \tag{2.8} \label{2.8}
\end{equation}
Applying \eqref{2.3} with $x = p_n$ and $r = m-1$ to \eqref{2.8}, we get
%\begin{displaymath}
%\frac{p_n}{n} = \log p_n - 1 - \frac{k_1}{\log p_n} - \ldots - \frac{k_{m-1}}{\log^{m-1} p_n} + O \left( \frac{1}{\log^mp_n} \right).
%\end{displaymath}
%Applying this asymptotic formula to 
\begin{displaymath}
\frac{A_n}{G_n} = e\left( 1 - \sum_{i = 1}^{m+1} \frac{r_i}{\log^ip_n} \left( \log p_n - 1 - \sum_{s=1}^{m-1} \frac{k_s}{\log^s p_n}  \right) \right) \cdot 
\exp \left( \sum_{j = 1}^m\frac{k_j}{\log^jp_n} \right) + O \left( \frac{1}{\log^{m+1}p_n} \right).
\end{displaymath}
Hence,
\begin{align*}
\frac{A_n}{G_n} & = e\left( 1 - \sum_{i = 1}^{m+1} \frac{r_i}{\log^{i-1}p_n} + \sum_{i = 1}^{m+1} \frac{r_i}{\log^ip_n} + \sum_{i = 1}^{m+1} 
\frac{k_1r_i}{\log^{i+1}p_n} + \ldots + \sum_{i = 1}^{m+1} \frac{k_{m-1}r_i}{\log^{m-1+i}p_n}\right) \\
& \p{\q\q} \times \exp \left( \sum_{j = 1}^m\frac{k_j}{\log^jp_n} \right) + O \left( \frac{1}{\log^{m+1}p_n} \right).
\end{align*}
To complete the proof, we separate the terms in the first brace, which are $O(1/\log^{m+1}p_n)$.
%\begin{align*}
%\frac{A_n}{G_n} & = e\left( 1 - \sum_{i = 1}^{m+1} \frac{r_i}{\log^{i-1}p_n} + \sum_{i = 1}^m \frac{r_i}{\log^ip_n} + \sum_{i = 1}^{m-1} 
%\frac{k_1r_i}{\log^{i+1}p_n} + \ldots + \frac{k_{m-1}r_1}{\log^mp_n} + O \left( \frac{1}{ \log^{m+1}p_n} \right)\right) \\
%& \p{\q\q} \times \exp \left( \sum_{j = 1}^m\frac{k_j}{\log^jp_n} \right) + O \left( \frac{1}{\log^{m+1}p_n} \right).
%\end{align*}
%An index offset in the first brace gives
%\begin{align*}
%\frac{A_n}{G_n} = e\left( 1 - r_1 + \sum_{i = 1}^m \frac{1}{\log^ip_n} \left( -r_{i+1} + r_i + \sum_{s=1}^{i-1} r_s k_{i-s} \right) \right) \cdot \exp \left( 
%\sum_{j = 1}^m\frac{k_j}{\log^jp_n} \right) + O \left( \frac{1}{\log^{m+1}p_n} \right),
%\end{align*}
%which completes the proof.
\end{proof}

Now, we use Theorem \ref{thm206} and the asymptotic formula
\begin{displaymath}
\exp \left( \sum_{j = 1}^m\frac{k_j}{\log^jp_n} \right) = \sum_{i=1}^m \frac{1}{i!} \left(\sum_{j = 1}^m\frac{k_j}{\log^jp_n} \right)^i + O \left( 
\frac{1}{\log^{m+1}p_n} \right),
\end{displaymath}
to implement the following \textsc{Maple}-code:
\begin{align*}
& > \texttt{restart:} \vspace{4mm} \\
& \textit{Computation of the values $k_i$:} \\
& > \texttt{for j from 1 to m do} \\
& \p{>>>} \texttt{K[j] := j$\ast$j!-sum(s!$\ast$K[j-s], s=1..j-1):} \\
& \textit{Computation of the values $r_i$:} \\
& > \texttt{for i from 1 to m+1 do} \\
& \p{>>>} \texttt{R[i] := (i-1)!$\ast$(1-1/2$\widehat{\p{a}}\{\texttt{i}\}$):} \\
& \p{>} \texttt{end do:} \\
& > \texttt{AsymptoticExpansion := proc(n) local S1,S2;} \\
% & \p{>>>} \texttt{if n $<= 0$ then ERROR('n_ist_keine_natürliche_Zahl') end if;} \\
&\p{>>>} \texttt{S1 := 1/2} + \texttt{sum}(\texttt{b$\widehat{\p{a}}\{\texttt{w}\}\ast$}(\texttt{-R[w+1]+R[w]+} 
\texttt{sum}(\texttt{R[v]$\ast$K[w-v]}\texttt{,} \texttt{v = 1..(w-1))), w = 1..n);} \\
&\p{>>>} \texttt{S2 := sum}\texttt{(1/t!$\ast$}\texttt{(sum(K[z]$\ast$b$\widehat{\p{a}}\{\texttt{z}\}$, z = 1..n))$\widehat{\p{a}}\{\texttt{t}\}$}\texttt{, 
t = 0..n));} \\
&\p{>>>} \texttt{RETURN(subs(b = 1/log(p$\underline{\p{a}}$n), convert(series(S1$\ast$S2, b,n+1), polynom)));} \\
&\p{>>>} \texttt{end;}
\end{align*}
To give the explicit asymptotic expansion for the ratio of $A_n$ and $G_n$ up to some positive integer $m$, it suffices to write
\begin{align*}
& > \texttt{expand(exp(1)*AsymptoticExpansion(m));} \p{aaaaaaaaaaaaaaaaaaaaaaaaaaaaaaaaaaaaaaaa}
\end{align*}
For instance, we set $m = 5$ to obtain that
\begin{equation}
\frac{A_n}{G_n} = \frac{e}{2} + \frac{e}{4\log p_n} + \frac{e}{\log^2 p_n} + \frac{61e}{12\log^3p_n} + \frac{1463e}{48\log^4p_n} + \frac{100367e}{480\log^5p_n} 
+ O \left( \frac{1}{\log^6p_n} \right). \tag{2.9} \label{2.9}
\end{equation}
One of Hassani's results \cite[p. 1602]{hassani2013} is that $A_n/G_n = e/2 + O(1/\log n)$. The asymptotic expansion given in \eqref{2.9} precises this result.

%$m = 2$ in Theorem \ref{thm206}, we get the following more accurate asymptotic formula.

% \begin{kor} \label{kor207}
% We have
% \begin{displaymath}
% \frac{A_n}{G_n} = \frac{e}{2} + \frac{e}{4\log p_n} + \frac{e}{\log^2 p_n} + O \left( \frac{1}{\log^3p_n} \right).
% \end{displaymath}
% \end{kor}
% 
% \begin{proof}
% We set $m = 2$ in Theorem \ref{thm206} to get
% \begin{displaymath}
% \frac{A_n}{G_n} = e\left( \frac{1}{2} - \frac{1}{4\log p_n} - \frac{1}{2\log^2 p_n} \right) \cdot \exp \left( \sum_{j = 1}^2\frac{k_j}{\log^jp_n} \right) + O 
% \left( \frac{1}{\log^3p_n} \right).
% \end{displaymath}
% Together with $\exp(1/x) = 1 + 1/x + 1/(2!x^2) + O(1/x^3)$, we obtain
% %\begin{displaymath}
% %\exp \left( \sum_{j = 1}^2\frac{k_j}{\log^jp_n} \right) = 1 + \frac{1}{\log p_n} + \frac{7}{2 \log^2 p_n} + O \left( \frac{1}{\log^3p_n} \right),
% %& = 1 + \sum_{s=1}^2 \frac{1}{s!} \left( \sum_{j = 1}^2\frac{k_j}{\log^jp_n} \right)^s + O \left( \frac{1}{\log^3p_n} \right) \\ 
% %\end{displaymath}
% %since $k_1 = 1$ and $k_2 = 3$. Applying this to \eqref{2.16}, we obtain that
% \begin{displaymath}
% \frac{A_n}{G_n} = e\left( \frac{1}{2} - \frac{1}{4\log p_n} - \frac{1}{2\log^2 p_n} \right) \cdot \left( 1 + \frac{1}{\log p_n} + \frac{7}{2 \log^2 p_n} + O 
% \left( \frac{1}{\log^3p_n} \right) \right) + O \left( \frac{1}{\log^3p_n} \right),
% \end{displaymath}
% which completes the proof.
% \end{proof}

\subsection{An asymptotic formula for the quantity $\log(1+2R(n)/p_n)$}

Finally, we derive an asymptotic formula for $\log(1+2R(n)/p_n)$ for $n \to \infty$, where
\begin{displaymath}
R(n) = \frac{1}{n}\sum_{k \leq n} p_k - \frac{p_n}{2}.
\end{displaymath}

\begin{prop} \label{prop207}
We have
\begin{displaymath}
\log \left( 1 + \frac{2R(n)}{p_n} \right) \sim -\frac{1}{2 \log n} \q\q (n \to \infty).
\end{displaymath}
\end{prop}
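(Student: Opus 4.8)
The plan is to read the statement off Hassani's identity \eqref{1.1} together with the first-order asymptotics already obtained for $A_n/G_n$ and $D(n)$. Solving \eqref{1.1} for the quantity of interest gives
\begin{displaymath}
\log\left(1 + \frac{2R(n)}{p_n}\right) = \log\frac{A_n}{G_n} - D(n) + \log 2,
\end{displaymath}
so the whole task reduces to expanding the right-hand side to order $1/\log p_n$.

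First I would use the expansion \eqref{1.2}, which gives $A_n/G_n = \tfrac{e}{2} + \tfrac{e}{4\log p_n} + O(1/\log^2 p_n) = \tfrac{e}{2}\bigl(1 + \tfrac{1}{2\log p_n} + O(1/\log^2 p_n)\bigr)$; applying $\log(1+u) = u + O(u^2)$ then yields
\begin{displaymath}
\log\frac{A_n}{G_n} = 1 - \log 2 + \frac{1}{2\log p_n} + O\!\left(\frac{1}{\log^2 p_n}\right).
\end{displaymath}
Next I would invoke Proposition \ref{prop201} with $r = 1$, recalling $k_1 = 1$, so that $D(n) = 1 + \tfrac{1}{\log p_n} + O(1/\log^2 p_n)$. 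Substituting both expansions into the displayed identity, the constants $1$ and $\log 2$ cancel and I am left with
\begin{displaymath}
\log\left(1 + \frac{2R(n)}{p_n}\right) = -\frac{1}{2\log p_n} + O\!\left(\frac{1}{\log^2 p_n}\right).
\end{displaymath}

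Finally I would translate this from $p_n$ to $n$: the Prime Number Theorem gives $p_n \sim n\log n$, hence $\log p_n = \log n + \log\log n + o(\log\log n) \sim \log n$, so that $1/\log p_n \sim 1/\log n$ while the $O(1/\log^2 p_n)$ term is of strictly smaller order than $1/\log n$. This gives $\log(1 + 2R(n)/p_n) \sim -\tfrac{1}{2\log n}$, as claimed. I do not anticipate a genuine obstacle here; the only points requiring a moment's care are checking that the error terms are truly of lower order, so that the $O$-estimate upgrades to an asymptotic equivalence, and the (harmless) replacement of $\log p_n$ by $\log n$. Alternatively, one could run the argument entirely in terms of $n$ from the start, using Proposition \ref{prop204} with $T_1(x) = 1$ for $D(n)$ together with the $n$-version of the expansion of $A_n/G_n$ underlying Theorems \ref{thm104} and \ref{thm105}.
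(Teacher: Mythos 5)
Your argument is correct, but it is not the route the paper takes. The paper's proof is a direct two-line computation: it cites the asymptotic $R(n) \sim -n/4$ (from the reference on the sum of the first $n$ primes) together with $p_n \sim n\log n$ to get $2R(n)/p_n \sim -1/(2\log n)$, and then applies $\log(1-1/(2x)) \sim -1/(2x)$. You instead invert the identity \eqref{1.1}, writing $\log(1+2R(n)/p_n) = \log(A_n/G_n) - D(n) + \log 2$, and feed in the already-established expansions \eqref{1.2} and Proposition \ref{prop201} with $r=1$; the constants cancel and the $1/(2\log p_n)$ term survives. This is legitimate and non-circular, since Theorem \ref{thm206} (hence \eqref{1.2}) is proved from \eqref{2.5}, Proposition \ref{prop201} and the expansion of $A_n$, not from \eqref{1.1}. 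What the paper's route buys is brevity and minimal input (only the leading term of $R(n)$ is needed); what your route buys is that it recycles machinery already built in Section 2 rather than invoking the $R(n)$ asymptotic separately, and it would extend mechanically to higher-order expansions of $\log(1+2R(n)/p_n)$ in powers of $1/\log p_n$. The only cosmetic caveat is that your final passage from $-1/(2\log p_n)$ to $-1/(2\log n)$ should note explicitly that $\log p_n = \log n + \log\log n + O(1) \sim \log n$ and that $O(1/\log^{2} p_n) = o(1/\log n)$, both of which you do address; so the upgrade from an $O$-estimate to an asymptotic equivalence is sound.
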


\begin{proof}
We have $p_n \sim n \log n$ and, by \cite{axler2017}, $R(n) \sim - n/4$ for $n \to \infty$. Hence, $\log(1+2R(n)/p_n) \sim \log( 1 - 1/(2 \log n))$ for $n \to 
\infty$. Since $\log (1-1/(2x)) \sim -1/(2x)$ for $x \to \infty$, the proposition is proved.
\end{proof}

\begin{rema}
At the end of Section 5, we give a more accurate asymptotic formula for $\log(1+2R(n)/p_n)$.
\end{rema}

\section{New estimates for the quantity $D(n)$}

After giving two asymptotic formulae for the quantity $D(n)$ in Subsection 2.1, we are interested in finding 
some explicit estimates for $D(n)$. 

\subsection{Explicit estimates for $D(n)$ in terms of $p_n$}

In this subsection, we give some explicit estimates for $D(n)$ in terms of $p_n$, which corresponds to the first three terms of the asymptotic expansion given 
in Proposition \ref{prop201}. We start with the following lower bound.

\begin{prop} \label{prop301}
For every positive integer $n \geq 218$, we have
\begin{equation}
D(n) > 1 + \frac{1}{\log p_n} + \frac{2.7}{\log^2p_n}. \tag{3.1} \label{3.1}
\end{equation}
\end{prop}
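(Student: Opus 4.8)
The plan is to start from the defining identity $D(n)=\log p_n-\vartheta(p_n)/n$ and to exploit that $n=\pi(p_n)$. Writing $x=p_n$, the claimed inequality \eqref{3.1} is equivalent to
$$\frac{\vartheta(x)}{\pi(x)}<\log x-1-\frac{1}{\log x}-\frac{2.7}{\log^2 x}$$
for every prime $x\geq p_{218}$. Since $\vartheta(x)/\pi(x)=(\vartheta(x)/x)\cdot(x/\pi(x))$, it suffices to multiply an explicit upper bound for $\vartheta(x)/x$ with an explicit upper bound for $x/\pi(x)$, the latter being an explicit lower bound for $\pi(x)$. (Equivalently, one may use the Abel-summation identity $D(n)=\frac1n\int_2^{p_n}\tfrac{\pi(t)}{t}\,dt$ and bound the integrand from below and the denominator $n=\pi(p_n)$ from above; this leads to the same estimate.)

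For $x/\pi(x)$ I would invoke a Panaitopol-type inequality of the form $\pi(x)>x/(\log x-1-1/\log x-c/\log^2 x)$ valid for $x\geq x_0$, which is precisely the explicit counterpart of \eqref{2.3} truncated after the $1/\log^2 x$-term (such bounds, with some $c$ close to $k_2=3$, are available in the work of Panaitopol cited for \eqref{2.2} and in later refinements); this says $x/\pi(x)<\log x-1-1/\log x-c/\log^2 x$. For $\vartheta(x)/x$ I would use an explicit estimate $\vartheta(x)<x\bigl(1+\eta(x)\bigr)$ with $\eta(x)$ of order at most $1/\log^3 x$, which follows from the numerical verification of the Riemann hypothesis up to a large height together with known zero-free-region estimates. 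Multiplying the two bounds,
$$\frac{\vartheta(x)}{\pi(x)}<\Bigl(\log x-1-\frac{1}{\log x}-\frac{c}{\log^2 x}\Bigr)\bigl(1+\eta(x)\bigr),$$
and the right-hand side is at most $\log x-1-1/\log x-2.7/\log^2 x$ as soon as $\eta(x)\log^3 x\leq (c-2.7)+o(1)$, which holds for all sufficiently large $x$ because $\eta(x)=o(1/\log^2 x)$ and $c>2.7$.

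This reduces the statement to a one-variable inequality in $x$ that is valid for all $x\geq X_0$ for some explicit threshold $X_0$; for the finitely many primes $p_n$ with $218\leq n$ and $p_n<X_0$ one verifies $D(n)>1+1/\log p_n+2.7/\log^2 p_n$ directly from tabulated values of $p_n$ and $\vartheta(p_n)$, and one likewise confirms by finite computation that $n=218$ is the correct (smallest admissible) cut-off. Note the asymptotic expansion in Proposition \ref{prop201} has $1/\log^2 p_n$-coefficient $k_2=3$, so there is genuine room for the constant $2.7$.

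The main obstacle I anticipate is quantitative bookkeeping rather than anything conceptual: one must choose the constant $c$ in the $\pi(x)$-bound and the decay function $\eta(x)$ in the $\vartheta(x)$-bound so that the product estimate beats $2.7/\log^2 x$ all the way down to a threshold $X_0$ small enough that the residual check from $p_{218}$ up to $X_0$ is computationally feasible — balancing "strength of the inequality" against "size of the starting point." In particular one must make sure the $\vartheta$-bound decays faster than $1/\log^2 x$, so that the coefficient $2.7$ (rather than something exceeding $3$) is attainable for every $x$ and not merely asymptotically.
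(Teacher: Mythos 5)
Your proposal is correct and follows essentially the same route as the paper: an explicit Panaitopol-type lower bound for $\pi(x)$ (the paper uses $\log p_n > p_n/n + 1 + 1/\log p_n + 2.85/\log^2 p_n$ for $n \geq 2\,324\,692$) combined with an explicit upper bound $\vartheta(x) < x + 0.15x/\log^3 x$ and the inequality $p_n \leq n\log p_n$, which costs exactly $0.15$ in the $\log^{-2}$-coefficient, followed by a computer check of the remaining cases. Your anticipated constants ($c = 2.85$, $\eta(x)\log^3 x = 0.15$) are precisely the ones the paper uses.
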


\begin{proof}
Substituting $x = p_n$ in \cite[Corollary 3.9]{axler20172}
% it is shown that the inequality
%\begin{displaymath}
%\pi(x) > \frac{x}{\log x - 1 - \frac{1}{\log x} - \frac{2.85}{\log^2 x}}
%\end{displaymath}
%holds for every $x \geq 38099531 = p_{2324692}$. Setting $x = p_n$,
we get that the inequality
\begin{equation}
\log p_n  > \frac{p_n}{n} + 1 + \frac{1}{\log p_n} + \frac{2.85}{\log^2p_n} \tag{3.2} \label{3.2}
\end{equation}
is fulfilled for every positive integer $n \geq 2\,324\,692$. Next, in \cite[Theorem 1.1]{axler20172} it is shown that $\vartheta(x) < x + 0.15x/\log^3x$ for 
every $x > 1$. Together with \eqref{3.2} and the definition of $D(n)$, we get
\begin{equation}
D(n) > 1 + \frac{1}{\log p_n} + \frac{2.85}{\log^2p_n} - \frac{0.15p_n}{n \log^3p_n} \tag{3.3} \label{3.3}
\end{equation}
for every positive integer $n \geq 2\,324\,692$. Setting $x = p_n$ in \cite[Corollary 1]{rosser1962}, we obtain that
\begin{equation}
p_n \leq n \log p_n \tag{3.4} \label{3.4}
\end{equation}
for every positive integer $n \geq 7$. Applying \eqref{3.4} to \eqref{3.3}, we get that the inequality \eqref{3.1} holds for every positive integer $n \geq 
2\,324\,692$. We conclude by direct computation.
\end{proof}

In the following proposition, we give two lower bounds for $D(n)$, which improve the inequality \eqref{3.1} for all sufficiently large values of $n$.

\begin{prop} \label{prop302}
For every positive integer $n$, we have
\begin{equation}
D(n) > 1 + \frac{1}{\log p_n} + \frac{3}{\log^2p_n} - \frac{187}{\log^3 p_n} \tag{3.5} \label{3.5}
\end{equation}
and
\begin{equation}
D(n) > 1 + \frac{1}{\log p_n} + \frac{3}{\log^2p_n} + \frac{13}{\log^3p_n} - \frac{1160159}{\log^4 p_n}. \tag{3.6} \label{3.6}
\end{equation}
\end{prop}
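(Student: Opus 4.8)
The plan is to argue exactly as in the proof of Proposition \ref{prop301}, only pushing the expansion of Proposition \ref{prop201} one term further. Write $D(n)=\log p_n-\vartheta(p_n)/n$ and combine three ingredients: (i) an explicit lower bound for $\log p_n$ in terms of $p_n/n$, namely a version of \eqref{3.2} carried out to the next order, of the shape
\[
\log p_n>\frac{p_n}{n}+1+\frac{1}{\log p_n}+\frac{c_2}{\log^2p_n}+\frac{c_3}{\log^3p_n}-\frac{c_4}{\log^4p_n},
\]
available from Axler's explicit work \cite{axler20172} in the same way that \eqref{3.2} was obtained there; (ii) an explicit upper bound for Chebyshev's function, of the shape $\vartheta(x)<x+\eta\,x/\log^{k}x$ for $x>1$, again from \cite{axler20172}; and (iii) the elementary inequality \eqref{3.4}. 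From (ii) we get $\vartheta(p_n)/n<p_n/n+\eta(p_n/n)/\log^{k}p_n$, and then \eqref{3.4} gives $(p_n/n)/\log^{k}p_n\le 1/\log^{k-1}p_n$, so that $-\vartheta(p_n)/n>-p_n/n-\eta/\log^{k-1}p_n$. Adding this to (i), the $p_n/n$ terms cancel and one is left with
\[
D(n)>1+\frac{1}{\log p_n}+\frac{c_2}{\log^2p_n}+\frac{c_3}{\log^3p_n}-\frac{c_4}{\log^4p_n}-\frac{\eta}{\log^{k-1}p_n}.
\]

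Now I would choose the reference inequalities so that the coefficient of $1/\log^2p_n$ surviving after the $\vartheta$-loss is absorbed is at least $3$ — this is the analogue of the passage from $2.85$ to $2.7$ in Proposition \ref{prop301}, and it uses $k_2=3$ from the Definition preceding Proposition \ref{prop201}. For \eqref{3.5} I then collapse everything of order $1/\log^3p_n$ or smaller into one term $-187/\log^3p_n$: on the range under consideration $\log p_n$ is bounded below, so each of the finitely many tail contributions $c_3/\log^3p_n$, $-c_4/\log^4p_n$, $-\eta/\log^{k-1}p_n$ exceeds $-187/\log^3p_n$ once $187$ is taken large enough, and the stated constant is precisely a value that works. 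For \eqref{3.6} I instead keep the term $13/\log^3p_n$ (here $k_3=13$) and collapse only the order-$1/\log^4p_n$-and-smaller remainder into $-1160159/\log^4p_n$. This proves both inequalities for all but finitely many $n$; the remaining small cases are settled by direct computation, and in fact trivially, since $D(n)=\log p_n-\vartheta(p_n)/n\ge0$ for every $n$ (each of the $n$ logarithms summed in $\vartheta(p_n)$ is at most $\log p_n$), whereas for such small $p_n$ the right-hand sides of \eqref{3.5} and \eqref{3.6} are negative.

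The main obstacle I anticipate is purely quantitative bookkeeping: pinning down in \cite{axler20172} the precise explicit forms of the $\log p_n$-lower bound and the $\vartheta$-upper bound, with ranges of validity that fit together, and then tracking the accumulated error carefully enough to certify that $187$ and $1160159$ really are admissible for every $n$ beyond the point where the analytic bounds apply — so that, together with the trivial small cases, no $n$ is left uncovered. One must also watch which power $k$ of $\log x$ occurs in the $\vartheta$-estimate: if the induced loss sits at order $1/\log^2p_n$ it has to be compensated by a correspondingly larger $1/\log^2p_n$-coefficient in the $\log p_n$-bound (exactly as in Proposition \ref{prop301}), whereas a $\vartheta$-estimate with a higher power of $\log x$ lets that loss disappear harmlessly into the $-187$, respectively $-1160159$, remainder.
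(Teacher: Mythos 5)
Your proposal follows the same route as the paper: for \eqref{3.5} the paper combines the bound $\vert\vartheta(x)-x\vert<100x/\log^4x$ (valid for $x\geq 70\,111=p_{6\,946}$) with the lower bound $\pi(x)>x/(\log x-1-\tfrac{1}{\log x}-\tfrac{3}{\log^2x}+\tfrac{87}{\log^3x})$ from \cite{axler20172}, and exactly as you describe the inequality \eqref{3.4} converts the $\vartheta$-loss into $100/\log^3p_n$, giving $187=87+100$; for \eqref{3.6} it uses $\vert\vartheta(x)-x\vert<580115x/\log^5x$ together with a $\pi(x)$-bound carrying $-13/\log^3x+580044/\log^4x$ (valid for $x\geq 10^{13}$), giving $1160159=580044+580115$. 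So the mechanism, the role of $k_2=3$ and $k_3=13$, and the bookkeeping you anticipate are all exactly right.

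The one concrete error is your claim that the finite verification is ``trivial'' because the right-hand sides are negative on the residual range. For \eqref{3.5} the right-hand side is negative only when $\log^3p_n+\log^2p_n+3\log p_n<187$, i.e.\ for $p_n\lesssim 194$, whereas the $\vartheta$-estimate used in the analytic step only applies from $p_n\geq 70\,111$ onward; on the intervening range ($44\lesssim n\leq 6\,945$) the right-hand side is positive and $D(n)\geq 0$ is not enough, so one genuinely has to compute $\vartheta(p_n)$ for these $n$ (which is what the paper does). Your observation does happen to work for \eqref{3.6}, since $\log(10^{13})\approx 29.93<1160159^{1/4}\approx 32.8$; the paper instead closes that range by noting that \eqref{3.5} implies \eqref{3.6} whenever $\log p_n\leq 1160159/200$. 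Since you also offered ``direct computation'' as the fallback, this is a repairable slip rather than a structural gap, but as written the justification for the small cases of \eqref{3.5} is false.
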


\begin{proof}
We start with the proof of \eqref{3.5}. By \cite[Proposition 2.5]{axler20172}, we have $|\vartheta(x) - x| < 100x/\log^4x$ for every $x \geq 70\,111 = 
p_{6\,946}$. Furthermore, in \cite[Proposition 3.10]{axler20172} it is found that the inequality
\begin{displaymath}
\pi(x) > \frac{x}{\log x - 1 - \frac{1}{\log x} - \frac{3}{\log^2 x} + \frac{87}{\log^3 x}}
\end{displaymath}
holds for every $x \geq 19\,423$. Similar to the proof of Proposition \ref{prop301}, we get that the inequality \eqref{3.5} is fulfilled for every positive 
integer $n \geq 6\,946$. For smaller values of $n$, we use a computer.

Next we give the proof of \eqref{3.6}. In \cite[Corollary 2.2]{axler20173}, it is shown that the inequality $|\vartheta(x) - x| < 580115x/\log^5x$ holds for 
every $x \geq 2$. Further, we found in the proof of \cite[Theorem 1.1]{axler20173} that
\begin{displaymath}
\pi(x) > \frac{x}{\log x - 1 - \frac{1}{\log x} - \frac{3}{\log^2 x} - \frac{13}{\log^3 x} + \frac{580044}{\log^4x}}
\end{displaymath}
for every $x \geq 10^{13}$. Similar to the proof of Proposition \ref{prop301}, we get that the inequality \eqref{3.6} holds for every positive integer $n \geq 
\pi(10^{13}) + 1$. For smaller values of $n$, we use \eqref{3.5}.
\end{proof}

Since $k_1 = 1$ and $k_2 = 3$, Proposition \ref{prop201} implies that there is a smallest positive integer $N_0$ so that
\begin{equation}
D(n) > 1 + \frac{1}{\log p_n} + \frac{3}{\log^2p_n} \tag{3.7} \label{3.7}
\end{equation}
for every positive integer $n \geq N_0$. In the following corollary, we make a first progress in finding this $N_0$.

\begin{kor} \label{kor303}
The inequality \eqref{3.7} holds for every positive integer $n$ satisfying $264 \leq n \leq \pi(10^{19}) = 234\,057\,667\,276\,344\,607$ and $n \geq 
\pi(e^{1160159/13}) + 1$.
\end{kor}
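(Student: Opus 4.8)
\emph{Proof proposal.} My plan is to split the two disjoint ranges of $n$ in the statement and treat them by completely different means.

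\emph{The range $n \ge \pi(e^{1160159/13}) + 1$.} Here I would simply invoke \eqref{3.6}, rewriting it as
\[
D(n) > 1 + \frac{1}{\log p_n} + \frac{3}{\log^2 p_n} + \frac{13\log p_n - 1160159}{\log^4 p_n}.
\]
The extra term is nonnegative exactly when $\log p_n \ge 1160159/13$, i.e.\ when $p_n \ge e^{1160159/13}$, and the hypothesis $n \ge \pi(e^{1160159/13}) + 1$ forces $p_n > e^{1160159/13}$ (the number $e^{1160159/13}$ is not prime, so $p_{\pi(e^{1160159/13})+1}$ already exceeds it). Hence \eqref{3.7} follows at once, with strict inequality. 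This part is routine.

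\emph{The range $264 \le n \le \pi(10^{19})$.} For such $n$ one has $p_n < 10^{19}$, a range in which it is known that $\vartheta(x) < x$ (a result of Büthe resting on the partial verification of the Riemann hypothesis; this is also one of the numerical inputs behind the explicit estimates used in the proof of Proposition \ref{prop302}). I would use this to bound
\[
D(n) = \log p_n - \frac{\vartheta(p_n)}{n} > \log p_n - \frac{p_n}{n},
\]
so that \eqref{3.7} for these $n$ reduces to the cleaner inequality $\pi(p_n) = n > g(p_n)$, where $g(x) = x/(\log x - 1 - 1/\log x - 3/\log^2 x)$. Since $g$ is increasing and $\tl(x) - g(x) \sim 13x/\log^5 x > 0$, the function $g$ is comfortably dominated by $\tl(x)$; hence I would deduce $\pi(x) > g(x)$ for all primes $x \ge x_0$, where $x_0$ is a moderate explicit bound, from a sufficiently sharp explicit lower bound for $\pi(x)$ — one of the form $\pi(x) > \frac{x}{\log x}\bigl(1 + \frac{1}{\log x} + \frac{2}{\log^2 x} + \frac{c}{\log^3 x}\bigr)$ with a constant $c > 6$ valid from $x_0$ on (such bounds are available, e.g.\ from work of Dusart and of the author, again ultimately resting on the verification of the Riemann hypothesis up to a large height). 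For the finitely many $n$ left, those with $p_n \le x_0$, I would verify \eqref{3.7} directly on a computer: sieve the primes up to $x_0$, carry the partial sums $\vartheta(p_n) = \sum_{k \le n}\log p_k$, and check $D(n) > 1 + 1/\log p_n + 3/\log^2 p_n$ for each $n$ from $264$ up to $\pi(x_0)$ — the value $264$ being whatever this search returns as the least admissible starting index.

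\emph{Where the difficulty lies.} The only real obstacle is one of reach: in the second range, the direct verification is limited by how far it is feasible to sieve, and the estimate-based half is limited by the largest $x$ for which $\vartheta(x) < x$ (together with a matching sharp lower bound for $\pi$) is currently established, namely $x = 10^{19}$. That is precisely why the interval $\bigl(\pi(10^{19}), \pi(e^{1160159/13})\bigr)$ is left uncovered here and why the corollary represents only a first progress toward pinning down $N_0$: closing the gap would require either pushing the verification of the Riemann hypothesis far beyond its current height, or a genuine sharpening of the explicit estimates behind \eqref{3.5}--\eqref{3.6}.
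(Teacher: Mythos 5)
Your overall decomposition is exactly the paper's: you dispose of the range $n \geq \pi(e^{1160159/13})+1$ via \eqref{3.6} (correct as you state it, since \eqref{3.6} is strict and the added term $13/\log^3 p_n - 1160159/\log^4 p_n$ is nonnegative there), and you handle $264 \leq n \leq \pi(10^{19})$ by combining B\"uthe's bound $\vartheta(x) < x$ for $x \leq 10^{19}$ with the lower bound $\pi(x) > g(x)$, where $g(x) = x/(\log x - 1 - 1/\log x - 3/\log^2 x)$, plus a finite computer verification. The paper cites this last bound directly as \eqref{3.8}, valid for $65\,405\,887 \leq x \leq 5.5\cdot 10^{25}$ and again for $x \geq e^{580044/13}$, which covers the needed range $x \leq 10^{19}$.

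The one step that fails as written is your proposed source for $\pi(x) > g(x)$: a lower bound $\pi(x) > \frac{x}{\log x}\bigl(1 + \frac{1}{\log x} + \frac{2}{\log^2 x} + \frac{c}{\log^3 x}\bigr)$ with a constant $c > 6$ ``valid from $x_0$ on''. No such bound can exist for any $x_0$: since $\pi(x) = \frac{x}{\log x}\bigl(1 + \frac{1}{\log x} + \frac{2}{\log^2 x} + \frac{6}{\log^3 x} + O(\frac{1}{\log^4 x})\bigr)$, any lower bound whose coefficient of $x/\log^4 x$ exceeds $6$ is eventually violated. The same tension is visible in your own observation $\tl(x) - g(x) \sim 13x/\log^5 x$: the margin by which $g$ sits below $\pi$ is only of order $x/\log^5 x$, so it cannot be certified by padding the $x/\log^4 x$ term, and this is precisely why \eqref{3.8} (and the estimates behind \eqref{3.5} and \eqref{3.6}) carry a bounded range of validity resting on RH verification together with a separate, astronomically large unconditional threshold. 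Since the corollary only needs $x \leq 10^{19}$, the repair is immediate: invoke $\pi(x) > g(x)$ on the bounded range $[65\,405\,887,\, 10^{19}]$ as in \eqref{3.8} rather than deriving it from an all-$x$ estimate of the form you describe. With that substitution your proof coincides with the paper's, including the concluding computer check for $264 \leq n < \pi(65\,405\,887)$.
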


\begin{proof}
The inequality \eqref{3.6} implies the validity of \eqref{3.7} for every positive integer $n \geq \pi(e^{1160159/13}) + 1$. So, it suffices to prove that the 
inequality \eqref{3.7} holds for every positive integer $n$ such that $264 \leq n \leq \pi(10^{19})$. By \cite[Theorem 1.1]{axler20173}, we have
\begin{equation}
\pi(x) > \frac{x}{\log x - 1 - \frac{1}{\log x} - \frac{3}{\log^2 x}} \tag{3.8} \label{3.8}
\end{equation}
for every $x$ satisfying $65\,405\,887 \leq x \leq 5.5 \cdot 10^{25}$ and $x \geq e^{580044/13}$. Büthe \cite[Theorem 2]{buethe} found that $\vartheta(x) < x$ 
for every $x$ such that $1 \leq x \leq 10^{19}$ and together with \eqref{3.8}, we get, similar to the proof of Proposition \ref{prop301}, that the inequality 
\eqref{3.7} holds for every positive integer $n$ with $\pi(65\,405\,887) \leq n \leq \pi(10^{19})$. Finally, we check the remaining cases with a computer.
\end{proof}

Based on Corollary \ref{kor303} we state the following conjecture.

\begin{conj} \label{conj304}
The inequality \eqref{3.7} holds for every positive integer $n \geq 264$.
\end{conj}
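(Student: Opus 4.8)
The plan is to reduce the conjecture to the single gap left open by Corollary \ref{kor303} and then to attack that gap by explicit estimates for $\vartheta$ and $\pi$. By Corollary \ref{kor303}, the inequality \eqref{3.7} already holds for $264 \le n \le \pi(10^{19})$, and by \eqref{3.6} it holds for every $n \ge \pi(e^{1160159/13}) + 1$; so only the range $10^{19} < p_n \le e^{1160159/13}$ remains. Since $n = \pi(p_n)$, the inequality \eqref{3.7} reads $\vartheta(p_n)/\pi(p_n) < \log p_n - 1 - 1/\log p_n - 3/\log^2 p_n$, so it suffices to prove, for every real $x$ with $10^{19} < x \le e^{1160159/13}$,
\begin{equation*}
\frac{\vartheta(x)}{\pi(x)} < \log x - 1 - \frac{1}{\log x} - \frac{3}{\log^2 x}. \tag{$\star$}
\end{equation*}

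The natural route to $(\star)$ is to insert an explicit upper bound $\vartheta(x) < x(1 + \delta(x))$ together with an explicit lower bound $\pi(x) > x/M(x)$, where $M(x)$ is a truncation of the Panaitopol expansion $\log x - 1 - k_1/\log x - k_2/\log^2 x - k_3/\log^3 x - \cdots$, and then to observe that, since $k_1 = 1$, $k_2 = 3$ and $k_3 = 13$, one has $x/\pi(x) \le M(x)$ with $M(x)$ lying below the target $\log x - 1 - 1/\log x - 3/\log^2 x$ by a margin of order $13/\log^3 x$. Hence $(\star)$ follows as soon as $\delta(x)$ is small compared with this relative margin, i.e. essentially as soon as $\vartheta(x) - x < c\,x/\log^4 x$ with $c$ a little below $13$. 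A sufficiently sharp $\pi$-bound of this shape is available from \eqref{3.8} and the finer inequalities of \cite{axler20172, axler20173}, and the residual small cases would then be settled by a direct computation, exactly as in the proof of Corollary \ref{kor303}.

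The main obstacle is therefore the quality of the available explicit bound for $\vartheta(x) - x$ on the whole range $10^{19} < x \le e^{1160159/13}$. The bound $\vartheta(x) \le x$ of Büthe \cite{buethe} is at present known only for $x \le 10^{19}$, so it is useless here; the bound $|\vartheta(x) - x| < 100\,x/\log^4 x$ of \cite[Proposition 2.5]{axler20172} carries a constant ($100$) far larger than the admissible value ($\sim 13$), which — whatever $\pi$-estimate one pairs it with — makes the product exceed $\log x - 1 - 1/\log x - 3/\log^2 x$; and the bound $|\vartheta(x) - x| < 580115\,x/\log^5 x$ of \cite[Corollary 2.2]{axler20173}, while eventually good enough, only becomes competitive once $\log x$ exceeds roughly $1160159/13$, which is precisely the point beyond which \eqref{3.6} already applies. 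Thus no combination of the currently available inputs closes the gap, and a proof of the conjecture seems to require either a sharper explicit estimate of the form $\vartheta(x) - x < c\,x/\log^4 x$ with $c$ small (which would follow from an improved explicit zero-free region or zero-density estimate for $\zeta$), or an extension of the numerical verification of $\vartheta(x) \le x$ up to a height near $e^{1160159/13}$ (equivalently, a verification of the Riemann hypothesis to a correspondingly large height) — both well beyond current reach. This is why the statement is only conjectured.
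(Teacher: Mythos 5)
Your proposal correctly recognizes that the paper offers no proof of this statement: it is stated as a conjecture, supported only by Corollary~\ref{kor303}, and the range $10^{19} < p_n \le e^{1160159/13}$ is left open precisely because Büthe's verification of $\vartheta(x)<x$ stops at $10^{19}$ while the unconditional bounds $|\vartheta(x)-x|<100x/\log^4 x$ and $|\vartheta(x)-x|<580115x/\log^5 x$ are too weak against the admissible relative margin of roughly $13/\log^4 x$ coming from the $k_3$ term of Panaitopol's expansion --- exactly the obstruction you identify. Your only slight imprecision is in attributing the threshold $e^{1160159/13}$ to the $\vartheta$-bound alone; the constant $1160159 = 580115 + 580044$ aggregates the error terms of both the $\vartheta$-estimate and the $\pi$-estimate used in \eqref{3.6}, but this does not affect your (correct) conclusion that the statement remains conjectural.
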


Next, we establish some explicit upper bounds for $D(n)$ in terms of $p_n$. From Proposition \ref{prop201} follows that for each $\varepsilon > 0$ there is a 
positive integer $N_1 = N_1(\varepsilon)$, such that
\begin{displaymath}
D(n) < 1 + \frac{1}{\log p_n} + \frac{3+ \varepsilon}{\log^2p_n}
\end{displaymath}
for every positive integer $n \geq N_1$. We find the following.

\begin{prop} \label{prop305}
For every positive integer $n \geq 74\,004\,585$, we have
\begin{equation}
D(n) < 1 + \frac{1}{\log p_n} + \frac{3.84}{\log^2p_n}, \tag{3.9} \label{3.9}
\end{equation}
and for every positive integer $n$, we have
\begin{equation}
D(n) < 1 + \frac{1}{\log p_n} + \frac{3}{\log^2p_n} + \frac{213}{\log^3p_n}. \tag{3.10} \label{3.10}
\end{equation}
\end{prop}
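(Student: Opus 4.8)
The strategy mirrors the proofs of Proposition \ref{prop301} and Proposition \ref{prop302}, with all inequalities run in the opposite direction so as to produce \emph{upper} bounds for $D(n)$. Starting from $D(n) = \log p_n - \vartheta(p_n)/n$, I would write
\begin{displaymath}
D(n) = \left( \log p_n - \frac{p_n}{n} \right) + \frac{p_n - \vartheta(p_n)}{n}
\end{displaymath}
and bound each summand from above. The first summand requires an explicit \emph{upper} bound for the prime counting function, i.e.\ an inequality of the form
\begin{displaymath}
\pi(x) < \frac{x}{\log x - 1 - \frac{1}{\log x} - \frac{a_2}{\log^2 x} - \frac{a_3}{\log^3 x} - \cdots}
\end{displaymath}
valid for all $x$ beyond a suitable threshold; these are the companions, with reversed inequality, of the estimates from \cite[Corollary 3.9]{axler20172}, \cite[Proposition 3.10]{axler20172} and \cite[Theorem 1.1]{axler20173} already used above. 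Setting $x = p_n$, so that $\pi(p_n) = n$, and rearranging turns such a bound into $\log p_n - p_n/n < 1 + 1/\log p_n + a_2/\log^2 p_n + a_3/\log^3 p_n + \cdots$, which is precisely \eqref{3.2} with the inequality reversed; for \eqref{3.9} I would use a bound with $a_2$ only slightly larger than $3$, and for \eqref{3.10} one that keeps the coefficient of $1/\log^2 p_n$ equal to $3$ while controlling the coefficient of $1/\log^3 p_n$.

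For the second summand I would use one of the explicit estimates $|\vartheta(x) - x| < c\,x/\log^k x$ already quoted in the proofs of \eqref{3.5} and \eqref{3.6}: from $\vartheta(p_n) > p_n - c\,p_n/\log^k p_n$ we obtain $(p_n - \vartheta(p_n))/n < c\,p_n/(n\log^k p_n)$, and then $p_n \leq n\log p_n$ for $n \geq 7$, i.e.\ \eqref{3.4}, gives $(p_n - \vartheta(p_n))/n < c/\log^{k-1} p_n$. Adding the two estimates yields, for all sufficiently large $n$,
\begin{displaymath}
D(n) < 1 + \frac{1}{\log p_n} + \frac{a_2}{\log^2 p_n} + \frac{a_3 + \varepsilon(p_n)}{\log^3 p_n},
\end{displaymath}
where $\varepsilon(p_n) \to 0$ collects the contribution of the $\vartheta$-term together with the higher-order tails. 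Choosing the inputs so that the coefficient of $1/\log^2 p_n$ stays below $3.84$ for $n \geq 74\,004\,585$ (respectively, equals $3$ with the coefficient of $1/\log^3 p_n$ at most $213$ for every $n$) and then clearing the finitely many remaining $n$ by direct computation would establish \eqref{3.9} (respectively \eqref{3.10}).

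The work here is calibration rather than a genuinely new idea. The delicate point for \eqref{3.9} is to pair an explicit upper bound for $\pi(x)$ whose $1/\log^2 x$-coefficient exceeds the true value $3$ only by a small margin with a $\vartheta$-estimate sharp enough that the combined coefficient of $1/\log^2 p_n$ still comes in below $3.84$, and to verify that all the quoted bounds are already in force from $p_n \geq p_{74\,004\,585}$ onwards (a finite computer check being needed only if their individual thresholds force it). For \eqref{3.10} the analogous difficulty is to keep the coefficient of $1/\log^3 p_n$ down to $213$ while using only estimates valid essentially everywhere, so that the initial segment of $n$ that must be verified by computer remains manageably short.
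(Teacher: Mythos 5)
Your proposal is correct and follows essentially the same route as the paper: the actual proof derives $\log p_n < p_n/n + 1 + 1/\log p_n + 3.69/\log^2 p_n$ from an upper bound for $\pi(x)$, adds the contribution $0.15/\log^2 p_n$ coming from $\vartheta(x) > x - 0.15x/\log^3 x$ via $p_n \leq n\log p_n$ to reach $3.84$ for \eqref{3.9}, and analogously combines $113/\log^3 x$ from the $\pi$-bound with $100/\log^4 x$ from the $\vartheta$-bound to reach $213$ for \eqref{3.10}, finishing both cases by computer check. The only detail your sketch leaves open, the specific calibration, is resolved in the paper exactly as you anticipate.
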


\begin{proof}
We start with the proof of \eqref{3.9}. First, we consider the case where $n \geq 841\,508\,302$. From \cite[Corollary 3.3]{axler20172} follows that
\begin{equation}
\log p_n  < \frac{p_n}{n} + 1 + \frac{1}{\log p_n} + \frac{3.69}{\log^2p_n}. \tag{3.11} \label{3.11}
\end{equation}
Furthermore, by \cite[Theorem 1.1]{axler20172} we have $\vartheta(x) > x - 0.15x/\log^3x$ for every $x \geq 19\,035\,709\,163 = p_{841\,508\,302}$. 
Together with the definition of $D_n$ and the inequality \eqref{3.11} we obtain that
\begin{displaymath}
D(n) < 1 + \frac{1}{\log p_n} + \frac{3.69}{\log^2p_n} + \frac{0.15p_n}{n\log^3p_n}.
\end{displaymath}
Now we use \eqref{3.4} to get that the inequality \eqref{3.9} holds for every positive integer $n \geq 841\,508\,302$. For smaller values of $n$, we check the 
required inequality with a computer.

Next, we establish the inequality \eqref{3.10}. In \cite[Proposition 3.5]{axler20172}, it is shown that 
\begin{displaymath}
\pi(x) < \frac{x}{\log x - 1 - \frac{1}{\log x} - \frac{3}{\log^2x} - \frac{113}{\log^3 x}}
\end{displaymath}
for every $x \geq 41$. By \cite[Proposition 2.5]{axler20172}, we have $\vert \vartheta(x) - x \vert < 100 x/\log^4 x$ for every $x \geq 70\,111$. Now we argue 
as in the proof of Proposition \ref{prop302}. For the remaining cases, we use a computer.
\end{proof}

\subsection{Explicit estimates for $D(n)$ in terms of $n$}

Since the computation of $p_n$ is difficult for large $n$, the estimates for $D(n)$ obtained in Subsection 3.1 are ineffective for large $n$. Hence, we are
interested in estimates for $D(n)$ in terms of $n$. First, we note that Proposition \ref{prop204} implies that
\begin{equation}
D(n) = 1 + \frac{1}{\log n} - \frac{\log \log n - 3}{\log^2 n} + O \left( \frac{(\log \log n)^2}{\log^3n}  \right). \tag{3.12} \label{3.12}
\end{equation}
The goal of this subsection is to find explicit estimates for $D(n)$ in the direction of \eqref{3.12}. We start with lower bounds. Hassani \cite[Proposition 
1.6]{hassani2013} showed that the inequality $D(n) > 1 - 17/(5\log n)$ is valid for every positive integer $n \geq 2$. We give the following refinement.

\begin{prop} \label{prop306}
For every positive integer $n \geq 591$, we have
\begin{equation}
D(n) > 1 + \frac{1}{\log n} - \frac{\log \log n - 2.5}{\log^2 n}. \tag{3.13} \label{3.13}
\end{equation}
\end{prop}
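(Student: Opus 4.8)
The plan is to convert one of the lower bounds for $D(n)$ in terms of $p_n$ from Subsection 3.1 into a bound in terms of $n$, by inserting an explicit estimate for the $n$th prime. I would start from the identity $D(n) = \log p_n - \vartheta(p_n)/n$ together with an explicit upper bound for Chebyshev's function (for instance Büthe's $\vartheta(x) < x$ for $1 \le x \le 10^{19}$, and an estimate of the shape $\vartheta(x) < x + cx/\log^k x$ beyond that range), which gives $D(n) > \log p_n - p_n/n - cp_n/(n\log^k p_n)$; equivalently one may simply invoke \eqref{3.5} of Proposition \ref{prop302}, valid for every $n$, or the sharper \eqref{3.7} of Corollary \ref{kor303} on the range where it is available. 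The effect is the same: up to a term that will be absorbed into the error, $D(n)$ exceeds $\log p_n - p_n/n = 1 + 1/\log p_n + 3/\log^2 p_n + \cdots$.

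Next I would feed in a sharp explicit upper bound for $p_n$ in terms of $n$ — a Cipolla/Dusart-type inequality $p_n \le n\bigl(\log n + \log\log n - 1 + \tfrac{\log\log n - 2}{\log n} + \cdots\bigr)$, valid for $n$ above an explicit threshold, in the spirit of the expansions in Lemmas \ref{lem202} and \ref{lem203} — to obtain $\log p_n \le \log n + u(n)$ with $u(n) = \log\log n + \tfrac{\log\log n - 1}{\log n} + O\!\bigl(\tfrac{\log\log n}{\log^2 n}\bigr)$. Since $t \mapsto 1/\log t + 3/\log^2 t$ is decreasing, substituting this and expanding $\tfrac{1}{\log n + u} = \tfrac{1}{\log n} - \tfrac{u}{\log^2 n} + \tfrac{u^2}{\log^3 n} - \cdots$ and $\tfrac{1}{(\log n + u)^2} = \tfrac{1}{\log^2 n} - \tfrac{2u}{\log^3 n} + \cdots$ — keeping all terms down to order $(\log\log n)^2/\log^3 n$ and using a crude estimate such as $p_n \ge n\log n$ for whatever is pushed into the error — yields
\[
D(n) > 1 + \frac{1}{\log n} + \frac{3 - u(n)}{\log^2 n} - \frac{E(n)}{\log^3 n} = 1 + \frac{1}{\log n} - \frac{\log\log n - 3}{\log^2 n} - \frac{E'(n)}{\log^3 n},
\]
with explicit $E(n), E'(n) = O\bigl((\log\log n)^2\bigr)$. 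Comparing with the claimed bound, the $1/\log n$ and $(\log\log n)/\log^2 n$ parts cancel exactly, and one is left with a margin $0.5/\log^2 n$ against $E'(n)/\log^3 n$; it therefore suffices to verify the elementary inequality $0.5\log n > E'(n)$, which holds for all $n \ge N$ for some explicit $N$, and then to settle $591 \le n < N$.

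The hard part will be precisely this tightness. The bound \eqref{3.13} departs from the true asymptotics \eqref{3.12} only at order $\log^{-2} n$, and only by the small constant $0.5$; hence every lower-order contribution of size $(\log\log n)^j/\log^3 n$ — notably the $-6u/\log^3 n$ descending from the $3/\log^2 p_n$ term and the gain $+u^2/\log^3 n$ from expanding $1/\log p_n$ — is genuinely in play and must be carried with honest constants rather than swallowed by an $O(\cdot)$. This is what fixes the threshold $N$, and choosing a sufficiently precise explicit bound for $p_n$ keeps $N$ within reach. For the residual range $591 \le n < N$, I would verify \eqref{3.13} directly: on the long initial segment $p_n \le 10^{19}$ one uses Büthe's $\vartheta(x) < x$ together with explicit bounds for $p_n$, so that only a genuinely finite computation remains — exactly the pattern used in the proofs of Propositions \ref{prop301}, \ref{prop302} and \ref{prop305}.
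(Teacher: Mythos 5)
Your overall architecture is the paper's: take a lower bound for $D(n)$ in terms of $p_n$ from Subsection 3.1, convert $\log p_n$ into $\log n$ via an explicit Cipolla-type estimate (the paper uses a ready-made lower bound for $1/\log p_n$ in terms of $n$, namely \eqref{3.14}--\eqref{3.16}, rather than an upper bound for $p_n$, but that is the same computation), track all terms of order $(\log\log n)^j/\log^3 n$ honestly against the margin $0.5/\log^2 n$, and finish the initial segment by computer. On the range $264 \leq n \leq \pi(10^{19})$ this is exactly what the paper does with \eqref{3.7}, and your margin analysis there is sound.

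The genuine gap is the range $n > \pi(10^{19})$, where you claim that invoking \eqref{3.5} ``valid for every $n$'' has ``the same effect'' as \eqref{3.7}, with the discrepancy ``absorbed into the error.'' It cannot be. The correction $-187/\log^3 p_n$ in \eqref{3.5} must be beaten by the margin $0.5/\log^2 n$, which forces $\log n \gtrsim 374$, i.e.\ $n \gtrsim e^{374}$; the bound \eqref{3.6} is even worse (its $-1160159/\log^4 p_n$ term requires $\log n \gtrsim 1500$); and \eqref{3.7} is only established for $n \leq \pi(10^{19})$ and $n \geq \pi(e^{1160159/13})+1$. So on the intermediate range $\pi(10^{19}) < n < e^{374}$ neither of your two cited inputs gives a usable lower bound for $D(n)$, and that range is far beyond any finite computation. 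The missing ingredient is Proposition \ref{prop301}: the bound $D(n) > 1 + 1/\log p_n + 2.7/\log^2 p_n$ holds for all $n \geq 218$ with \emph{no} negative lower-order correction, and since $2.7 > 2.5$ it still leaves a positive margin $0.2/\log^2 n$, which dominates the $(\log\log n)^2/\log^3 n$ conversion errors once $n \geq \pi(10^{19})$ (so $\log n \approx 40$). That is precisely how the paper closes the large-$n$ case; without this (or an equivalent unconditional bound with second-order constant strictly between $2.5$ and $3$ and no harmful cubic term), your proof does not go through.
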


\begin{proof}
We denote the right-hand side of \eqref{3.13} by $f(n)$. First, let $n$ be a positive integer with $n \geq \pi(10^{19}) = 234\,057\,667\,276\,344\,607$. By 
\cite[Corollary 3.3]{axler20174}, we have
\begin{equation}
\frac{1}{\log p_n} \geq \frac{1}{\log n} - \frac{\log \log n}{\log^2 n} + \frac{(\log \log n)^2 - \log \log n + 1}{\log^2n \log p_n}, \tag{3.14} \label{3.14}
\end{equation}
which implies that the weaker inequality
\begin{equation}
\frac{1}{\log p_n} \geq \frac{1}{\log n} - \frac{\log \log n}{\log^2n}  \tag{3.15} \label{3.15}
\end{equation}
also holds. After combining \eqref{3.14} and \eqref{3.15}, we get
\begin{equation}
\frac{1}{\log p_n} \geq \frac{1}{\log n} - \frac{\log \log n}{\log^2 n} + \frac{(\log \log n)^2 - \log \log n + 1}{\log^2n} \left( \frac{1}{\log n} - 
\frac{\log \log n}{\log^2 n} \right). \tag{3.16} \label{3.16}
\end{equation}
Together with \eqref{3.1} and \eqref{3.15}, we get
\begin{displaymath}
D(n) > f(n) + \frac{0.2}{\log^2 n} + \frac{(\log \log n)^2 - 5.6 \log \log n + 1}{\log^3n} - \frac{(\log \log n)^3 - 3.3 (\log \log n)^2 + \log \log 
n}{\log^4n},
\end{displaymath}
which completes the proof for every positive integer $n \geq \pi(10^{19})$.
%where $f(n)$ denotes the right-hand site of \eqref{3.9}. Since
%\begin{displaymath}
% \frac{0.16}{\log^2 x} + \frac{(\log \log x)^2 - 5.6 \log \log x + 1}{\log^3x} - \frac{(\log \log x)^3 - 3.3 (\log \log x)^2 + \log \log x}{\log^4x} > 0
%\end{displaymath}
%for every $x \geq 1.3 \cdot 10^{17}$, the required inequality holds for every $n > \pi(10^{19}) \geq 1.3 \cdot 10^{17}$.

Similarly to the case $n \geq \pi(10^{19})$, we combine \eqref{3.15}, \eqref{3.16} and Corollary \ref{kor303} to get that
\begin{displaymath}
D(n) > f(n) + \frac{0.5}{\log^2 n} + \frac{(\log \log n)^2 - 7 \log \log n + 1}{\log^3n} - \frac{(\log \log n)^3 - 4 (\log \log n)^2 + \log \log n}{\log^4n}
\end{displaymath}
for every positive integer $n$ such that $264 \leq n \leq \pi(10^{19})-1$, which implies that the required inequality holds for every positive integer $n$ 
such that $2\,426\,927\,728 \leq n \leq \pi(10^{19})-1$. We verify the remaining cases with a computer.
\end{proof}

We get the following corollary.

\begin{kor} \label{kor307}
For every $\alpha < 1$ there exists a positive integer $n_0 = n_0(\alpha)$ so that $D(n) > 1 + \alpha/\log n$ for every positive integer $n \geq n_0$.
\end{kor}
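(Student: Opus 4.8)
The plan is to derive everything from the explicit lower bound of Proposition \ref{prop306}, since the asymptotic content of the corollary is already visible there: the coefficient of $1/\log n$ in the lower bound for $D(n)$ is exactly $1$, while the next term carries only a $(\log\log n)/\log^2 n$, which is $o(1/\log n)$. So for any fixed $\alpha<1$ there is room to absorb the secondary term.

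Concretely, I would fix $\alpha<1$ and work with $n\geq 591$, so that Proposition \ref{prop306} applies. Then
\begin{displaymath}
D(n) - 1 - \frac{\alpha}{\log n} > \frac{1-\alpha}{\log n} - \frac{\log\log n - 2.5}{\log^2 n} = \frac{1}{\log^2 n}\Bigl( (1-\alpha)\log n - \log\log n + 2.5 \Bigr).
\end{displaymath}
Since $1-\alpha>0$ and $\log\log n = o(\log n)$ as $n\to\infty$, the quantity $(1-\alpha)\log n - \log\log n + 2.5$ tends to $+\infty$; in particular it is strictly positive for all $n$ beyond some threshold. Choosing $n_0 = n_0(\alpha)$ to be any integer with $n_0\geq 591$ large enough that $(1-\alpha)\log n > \log\log n - 2.5$ holds for every $n\geq n_0$ (such an $n_0$ exists, and can be made explicit in terms of $\alpha$ by solving this elementary inequality), we conclude $D(n) > 1 + \alpha/\log n$ for all $n\geq n_0$, as desired.

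There is essentially no obstacle here — the statement is a soft limiting consequence of Proposition \ref{prop306}, and the only "work" is the elementary observation that $(\log\log n)/\log n\to 0$. If one wanted an explicit $n_0(\alpha)$, the mild nuisance would be inverting $(1-\alpha)\log n = \log\log n - 2.5$, but even a crude bound (e.g. using $\log\log n \leq \sqrt{\log n}$ for $n$ large) suffices and avoids any delicate estimation. One could also remark that for $\alpha\leq 0$ the claim is immediate, since Proposition \ref{prop306} already gives $D(n)>1\geq 1+\alpha/\log n$ for all $n\geq 591$.
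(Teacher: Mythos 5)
Your proposal is correct and is exactly the intended derivation: the paper states Corollary \ref{kor307} as an immediate consequence of Proposition \ref{prop306}, relying on the same observation that $(1-\alpha)\log n$ eventually dominates $\log\log n - 2.5$. Nothing further is needed.
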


\begin{rema}
Hassani \cite[Conjecture 1.7]{hassani2013} conjectured that there exist a real number $\beta$ with $0 < \beta < 5.25$ and a positive integer $n_0$, such that 
the inequality $D(n) > 1 + \beta/\log n$ is valid for every positive integer $n \geq n_0$. Corollary \ref{kor307} proves this conjecture. The inequality 
\eqref{3.13} implies that $D(n) > 1$ for every positive integer $n \geq 591$. A computer check shows that the last inequality also holds for every positive 
integer $n$ with $10 \leq n \leq 591$. So, the inequality $D(n) > 1$ holds for every positive integer $n \geq 10$, which was conjectured by Hassani 
\cite[Conjecture 1.7]{hassani2013}.
\end{rema}

Next, we establish some new upper bounds for $D(n)$ in terms of $n$. Using estimates for the $n$-th prime number and Chebyshev's $\vartheta$-function, Hassani 
\cite[Proposition 1.6]{hassani2013} found that $D(n) < 1 + 21/(4\log n)$ for every positive integer $n \geq 2$. We give the following improvement of Hassani's 
upper bound. 

\begin{prop} \label{prop308}
For every positive integer $n \geq 2$, we have
\begin{displaymath}
D(n) < 1 + \frac{1}{\log n} - \frac{\log \log n - 4.2}{\log^2 n}.
\end{displaymath}
In particular, for every $\beta \geq 1$ there exists a positive integer $n_1 = n_1(\beta)$ so that $D(n) < 1 + \beta/\log n$ for every positive integer $n \geq 
n_1$.
\end{prop}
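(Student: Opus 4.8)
The plan is to mirror the proof of Proposition~\ref{prop306}, but ``from above''. Denote by $g(n)$ the right–hand side of the asserted inequality, so that $g(n) = 1 + 1/\log n - (\log\log n)/\log^2 n + 4.2/\log^2 n$. Two ingredients enter. The first is an explicit upper bound for $D(n)$ in terms of $p_n$: by Proposition~\ref{prop305} we have $D(n) < 1 + 1/\log p_n + 3.84/\log^2 p_n$ for $n \ge 74\,004\,585$. The second is an explicit upper bound for $1/\log p_n$ in terms of $\log n$ and $\log\log n$, i.e. the counterpart of \eqref{3.14}. Such a bound follows from an explicit lower bound for the $n$th prime: the crude estimate $p_n \ge n(\log n + \log\log n - 1)$ gives $\log p_n \ge \log n + \log\log n$ for $n$ not too small, whence, expanding the geometric series, $1/\log p_n \le 1/\log n - (\log\log n)/\log^2 n + (\log\log n)^2/\log^3 n$; using a sharper explicit lower bound for $p_n$ (or a two–sided version of the estimate from \cite{axler20174}) one sharpens the last numerator to $(\log\log n)^2 - \log\log n + 1$.

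First I would handle $n \ge 74\,004\,585$. Substituting the sharpened bound for $1/\log p_n$, together with $1/\log^2 p_n \le 1/\log^2 n$, into the inequality from Proposition~\ref{prop305} yields
\[
D(n) < 1 + \frac{1}{\log n} - \frac{\log\log n}{\log^2 n} + \frac{3.84}{\log^2 n} + \frac{(\log\log n)^2 - \log\log n + 1}{\log^3 n},
\]
so that $D(n) < g(n)$ provided $(\log\log n)^2 - \log\log n + 1 \le 0.36\log n$. Writing $L = \log n$ and $\ell = \log L$, the function $L \mapsto (\ell^2 - \ell + 1)/L$ is decreasing for $\ell > 2$, so it suffices to verify this inequality at $n = 74\,004\,585$, where it holds with very little room to spare; this is precisely what pins the constant down to $4.2$.

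For the remaining range $2 \le n \le 74\,004\,584$ one computes $D(n) = \log p_n - \vartheta(p_n)/n$ directly, running once through the primes up to $p_{74\,004\,585}$ while accumulating $\vartheta$, and checks the inequality numerically. I expect the main obstacle to be exactly the tightness of the analytic range: the slack $4.2 - 3.84 = 0.36$ over the asymptotic constant $3$ must absorb an error of order $(\log\log n)^2/\log^3 n$, and with the crude prime bound this would only succeed for $n \gtrsim e^{e^{3.6}}$, far beyond the reach of a direct check. One is therefore forced to use a genuinely sharp explicit lower bound for $p_n$ and to carry the estimate all the way down to the threshold $74\,004\,585$ where the finite computation can take over; getting these two regimes to meet is the delicate point.

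Finally, the ``in particular'' clause follows at once: for $n \ge e^{e^{4.2}}$ we have $\log\log n \ge 4.2$, hence $-(\log\log n - 4.2)/\log^2 n \le 0$, and since $\beta \ge 1$ also $(1-\beta)/\log n \le 0$; therefore $g(n) \le 1 + \beta/\log n$, and the already established bound $D(n) < g(n)$ gives $D(n) < 1 + \beta/\log n$ for all $n \ge e^{e^{4.2}}$. Thus $n_1(\beta)$ can be taken to be $\lceil e^{e^{4.2}} \rceil$ for every $\beta \ge 1$.
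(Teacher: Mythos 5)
Your proposal reproduces the paper's argument almost exactly: combine Proposition \ref{prop305} with an explicit upper bound for $1/\log p_n$ in terms of $n$, reduce the claim to an inequality of the shape $(\log\log n)^2-\log\log n+1\le 0.36\log n$ (the slack $4.2-3.84=0.36$), and finish with a computation below the crossover point. The one discrepancy is your bound $1/\log p_n\le 1/\log n-(\log\log n)/\log^2 n+((\log\log n)^2-\log\log n+1)/\log^3 n$: the citable estimate (the paper's \eqref{3.17}, from \cite{axler20174}) carries an additional positive term $P_8(\log\log n)/(2\log^4 n)$, and your sharper version, while asymptotically true, is asserted rather than derived; using the available \eqref{3.17} pushes the analytic threshold from $74\,004\,585$ up to $1\,499\,820\,545$, which enlarges the finite verification but does not change the structure of the proof.
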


\begin{proof}
By \cite[Corollary 3.6]{axler20174}, we have
\begin{displaymath}
\frac{1}{\log p_n} \leq \frac{1}{\log n} - \frac{\log \log n}{\log^2 n} + \frac{(\log \log n)^2 - \log \log n + 1}{\log^2 n \log p_n} + \frac{P_8(\log \log 
n)}{2 \log^3n\log p_n} - \frac{P_9(\log \log n)}{2 \log^4n\log p_n}
\end{displaymath}
for every positive integer $n \geq 2$, where $P_8(x) = 3x^2 - 6x + 5.2$ and $P_9(x) =  x^3 - 6x^2 + 11.4x - 4.2$. Since $P_9(x) > 0$ for every $x \geq 0.5$, we 
get
\begin{equation}
\frac{1}{\log p_n} \leq \frac{1}{\log n} - \frac{\log \log n}{\log^2 n} + \frac{(\log \log n)^2 - \log \log n + 1}{\log^3 n} + \frac{P_8(\log \log n)}{2 \log^4
n} \tag{3.17} \label{3.17}
\end{equation}
for every positive integer $n \geq 6$. Together with Proposition \ref{prop305} and the inequality $3.84/\log^2 p_n \leq 3.84/\log^2 n$, we obtain that the 
inequality
\begin{equation}
D(n) < 1 + \frac{1}{\log n} - \frac{\log \log n - 3.84}{\log^2 n} + \frac{(\log \log n)^2 - \log \log n + 1}{\log^3 n} + \frac{P_8(\log \log n)}{2 \log^4 n} 
\tag{3.18} \label{3.18}
\end{equation}
holds for every positive integer $n \geq $. Notice that the inequality
\begin{equation}
\frac{(\log \log x)^2 - \log \log x + 1}{\log^3 x} + \frac{P_8(\log \log x)}{2 \log^4 x} < \frac{0.36}{\log^2 x} \tag{3.19} \label{3.19}
\end{equation}
holds for every $x \geq 1\,499\,820\,545$. Applying this to \eqref{3.18}, we get that the required inequality for every positive integer $n \geq 
1\,499\,820\,545$.
%. Notice that
%\begin{displaymath}
%\frac{(\log \log x)^2 - \log \log x + 1}{\log^3 x} + \frac{P_8(\log \log x)}{2 \log^4 x} < \frac{0.38}{\log^2 x}
%\end{displaymath}
%for every $x \geq 56615486$. Applying this inequality to \eqref{3.20}, the claim follows for every $n \geq 56615486$. 
Finally, we use a computer to check that the required inequality also holds for smaller values of $n$.
\end{proof}

%%%%%%%%%%%%%%%%%%%%%%%%%%%%%%%%%%%%%%%%%%%%%%%%%

\section{New estimates for the geometric mean of the prime numbers}

In the following, we use the identity \eqref{2.5}; i.e. $G_n = p_n/e^{D(n)}$, and the explicit estimates for $D(n)$ obtained in the previous section to find
new bounds for $G_n$, the geometric mean of the prime numbers. First, we notice that \eqref{2.5} and the inequality $D(n) > 1$, which holds for every positive 
integer $n \geq 10$, imply $G_n < p_n/e$ for every positive integer $n \geq 10$, which was already proved by Panaitopol \cite{panaitopol} in 1999. In the 
direction of Proposition \ref{prop205}, Kourbatov \cite[Theorem 2]{kourbatov} used explicit estimates for the prime counting function $\pi(x)$ and Chebyshev's 
$\vartheta$-function to show that the inequality
\begin{displaymath}
G_n < \frac{p_n}{\exp(1 + \frac{1}{\log p_n} + \frac{1.62}{\log^2 p_n})}
%G_n < \frac{p_n}{\exp(1 + 1/\log p_n + 1.62/\log^2 p_n)}
\end{displaymath}
is fulfilled for every prime number $p_n \geq 32\,059$; i.e. for every positive integer $n \geq 3\,439$. Actually, this inequality also holds for every $92 
\leq n \leq 3\,438$ as well. In the next proposition, we give some sharper estimates for $G_n$.

\begin{prop} \label{prop401}
For every positive integer $n \geq 218$, we have
\begin{displaymath}
G_n < \frac{p_n}{\exp(1 + \frac{1}{\log p_n} + \frac{2.7}{\log^2 p_n})}.
\end{displaymath}
For every positive integer $n$, we have
\begin{displaymath}
G_n < \frac{p_n}{\exp(1 + \frac{1}{\log p_n} + \frac{3}{\log^2 p_n} - \frac{187}{\log^3 p_n})}
\end{displaymath}
and
\begin{displaymath}
G_n < \frac{p_n}{\exp(1 + \frac{1}{\log p_n} + \frac{3}{\log^2 p_n} + \frac{13}{\log^3 p_n} - \frac{1160159}{\log^4 p_n})}.
\end{displaymath}
\end{prop}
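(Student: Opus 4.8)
The plan is to reduce all three inequalities to the lower bounds for $D(n)$ already established in Section 3, using the identity \eqref{2.5}. Recall that \eqref{2.5} states $G_n = p_n/e^{D(n)}$, and since $p_n > 0$ and the exponential function is strictly increasing, any lower bound of the form $D(n) > B(n)$ immediately yields $e^{D(n)} > e^{B(n)}$, hence
\begin{displaymath}
G_n = \frac{p_n}{e^{D(n)}} < \frac{p_n}{e^{B(n)}} = \frac{p_n}{\exp(B(n))}.
\end{displaymath}

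Thus the three assertions follow by choosing the appropriate $B(n)$. For the first inequality I would take $B(n) = 1 + 1/\log p_n + 2.7/\log^2 p_n$, which is exactly the lower bound for $D(n)$ given in Proposition \ref{prop301}, valid for every positive integer $n \geq 218$. For the second inequality I would take $B(n) = 1 + 1/\log p_n + 3/\log^2 p_n - 187/\log^3 p_n$, which is the content of \eqref{3.5} in Proposition \ref{prop302}, valid for every positive integer $n$. For the third inequality I would take $B(n) = 1 + 1/\log p_n + 3/\log^2 p_n + 13/\log^3 p_n - 1160159/\log^4 p_n$, which is \eqref{3.6} in Proposition \ref{prop302}, again valid for every positive integer $n$. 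In each case the claimed bound on $G_n$ is obtained by dividing $p_n$ by the exponential of the respective lower bound and invoking the monotonicity of $\exp$.

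There is no genuine obstacle here: the work has already been done in establishing the lower bounds for $D(n)$, and the passage from $D(n)$ to $G_n$ via \eqref{2.5} is a one-line monotonicity argument. The only point worth noting is that the range of validity of each $G_n$-bound is inherited verbatim from the corresponding $D(n)$-bound, so the statement ``for every positive integer $n \geq 218$'' in the first case and ``for every positive integer $n$'' in the remaining two cases are forced by Propositions \ref{prop301} and \ref{prop302} respectively.
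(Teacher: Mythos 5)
Your proposal is correct and matches the paper's own proof exactly: the paper likewise derives all three bounds by combining the identity $G_n = p_n/e^{D(n)}$ from \eqref{2.5} with Proposition \ref{prop301} and Proposition \ref{prop302}, inheriting the ranges of validity. Nothing is missing.
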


\begin{proof}
The first inequality is a direct consequence of \eqref{2.5} and Proposition \ref{prop301}. Further, we apply the inequalities obtained in Proposition 
\ref{prop302} to the identity \eqref{2.5} and get the remaining inequalities.
\end{proof}

Next, we use Corollary \ref{kor303} to get the following upper bound.

\begin{prop} \label{prop402}
For every positive integer $n$ satisfying $264 \leq n \leq \pi(10^{19}) = 234\,057\,667\,276\,344\,607$ and $n \geq \pi(e^{1160159/13}) + 1$, we have
\begin{displaymath}
G_n < \frac{p_n}{\exp(1 + \frac{1}{\log p_n} + \frac{3}{\log^2 p_n})}.
\end{displaymath}
\end{prop}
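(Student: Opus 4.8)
The plan is to derive Proposition \ref{prop402} as an immediate consequence of the identity \eqref{2.5} together with the explicit lower bound for $D(n)$ that has already been established in Corollary \ref{kor303}. Recall that \eqref{2.5} states $G_n = p_n/e^{D(n)}$; since the exponential function is strictly increasing, any lower bound $D(n) > L(n)$ translates directly into the upper bound $G_n < p_n/e^{L(n)}$. So the whole proof consists of invoking Corollary \ref{kor303} with the correct range of $n$ and feeding it through this monotonicity observation.

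Concretely, I would first recall the identity $G_n = p_n/e^{D(n)}$ from \eqref{2.5}. Then I would quote Corollary \ref{kor303}, which asserts that the inequality \eqref{3.7}, namely
\begin{displaymath}
D(n) > 1 + \frac{1}{\log p_n} + \frac{3}{\log^2 p_n},
\end{displaymath}
holds for every positive integer $n$ with $264 \leq n \leq \pi(10^{19})$ and for every positive integer $n \geq \pi(e^{1160159/13}) + 1$ — which is exactly the range appearing in the statement of Proposition \ref{prop402}. Applying the increasing function $x \mapsto p_n/e^x$ (with $p_n$ fixed) to this inequality yields
\begin{displaymath}
G_n = \frac{p_n}{e^{D(n)}} < \frac{p_n}{\exp(1 + \frac{1}{\log p_n} + \frac{3}{\log^2 p_n})}
\end{displaymath}
for precisely those $n$, which is the claimed bound.

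There is essentially no obstacle here: the work has all been done in Corollary \ref{kor303}, and the remaining step is the trivial observation that $t \mapsto e^t$ is monotone. The only thing to be careful about is matching the admissible range of $n$ verbatim with the one in Corollary \ref{kor303}, so that no case is silently dropped; in particular the gap $\pi(10^{19}) < n < \pi(e^{1160159/13}) + 1$ is genuinely excluded from the statement, mirroring the fact that \eqref{3.7} has not been verified there (this is the content of Conjecture \ref{conj304}). Thus the proof is a one-line deduction, and I would write it as such.
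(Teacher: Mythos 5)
Your proof is correct and is exactly the paper's argument: the paper's own proof of Proposition \ref{prop402} reads ``We combine \eqref{2.5} with Corollary \ref{kor303}.'' Your write-up just spells out the monotonicity step explicitly.
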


\begin{proof}
We combine \eqref{2.5} with Corollary \ref{kor303}.
\end{proof}

Proposition \ref{prop205} and the Prime Number Theorem imply that
\begin{equation}
G_n = \frac{p_n}{e} + O(n), \tag{4.1} \label{4.1}
\end{equation}
which was already obtained by Hassani \cite[p. 1602]{hassani2013} in 2013. In order to find new upper bounds for $G_n$ in the direction of the asymptotic 
formula \eqref{4.1}, we first notice the following result.

\begin{prop} \label{prop403}
For every positive integer $n \geq 47$, then
\begin{displaymath}
G_n < \frac{p_n}{e}\left( 1 - \frac{1}{\log p_n}\right).
\end{displaymath}
\end{prop}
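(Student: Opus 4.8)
The plan is to exploit the identity $G_n = p_n/e^{D(n)}$ from \eqref{2.5}. This reduces the assertion $G_n < \frac{p_n}{e}(1 - 1/\log p_n)$ to the equivalent inequality $e^{1-D(n)} < 1 - 1/\log p_n$. Since $p_n > e$ for every $n$ in the stated range, the right-hand side is positive, so I can take logarithms and reformulate the goal as a lower bound for $D(n)$, namely
\[
D(n) > 1 - \log\!\left(1 - \frac{1}{\log p_n}\right).
\]

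Next I would bound the right-hand side from above. Writing $t = \log p_n$ and using the Taylor series $-\log(1 - 1/t) = \sum_{k\ge 1} 1/(k t^k)$, I would estimate the tail by a geometric series, $\sum_{k\ge 2} 1/(k t^k) < \tfrac12 \sum_{k\ge 2} t^{-k} = 1/(2t(t-1))$, obtaining $1 - \log(1-1/t) < 1 + 1/t + 1/(2t(t-1))$. It therefore suffices to prove $D(n) > 1 + 1/t + 1/(2t(t-1))$. Here I would invoke Proposition \ref{prop301}, which gives $D(n) > 1 + 1/\log p_n + 2.7/\log^2 p_n$ for every $n \ge 218$; the remaining inequality $2.7/t^2 \ge 1/(2t(t-1))$ simplifies to the linear condition $t \ge 27/22$, which holds with enormous room since $p_n \ge p_{218}$. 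This settles all $n \ge 218$. (An alternative to the logarithm step would be $e^{-y} < 1 - y + y^2/2$ for $y > 0$ with $y = D(n)-1$, but the series estimate above seems cleanest.)

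For the finitely many remaining cases $47 \le n \le 217$ I would simply verify $G_n < \frac{p_n}{e}(1 - 1/\log p_n)$ on a computer, equivalently checking $\vartheta(p_n)/n < \log p_n - 1 + \log(1 - 1/\log p_n)$ from the exact values of the small primes.

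I do not expect a genuine obstacle: the only mild points to watch are that $1 - 1/\log p_n > 0$ before taking logarithms, and that the constant $2.7$ of Proposition \ref{prop301} really dominates the competing quantity $1/(2t(t-1))$, which it does comfortably since the latter is of size about $1/(2t^2)$. The lower cutoff $n \ge 47$ is imposed purely by the direct computation (the bound is fairly tight near $n = 47$), not by the analytic argument, which is valid for all $n \ge 218$.
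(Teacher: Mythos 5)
Your proposal is correct and follows essentially the same route as the paper: both reduce the claim via $G_n = p_n/e^{D(n)}$ to the lower bound $D(n) > 1 + 1/\log p_n + 2.7/\log^2 p_n$ of Proposition \ref{prop301} for $n \geq 218$ and finish the range $47 \leq n \leq 217$ by direct computation. The only cosmetic difference is that the paper handles the exponential with $e^x \geq 1+x$ (reducing to $\log p_n > 2.7/1.7$) while you take logarithms and bound the tail of the series for $-\log(1-1/\log p_n)$ (reducing to $\log p_n \geq 27/22$); both are valid.
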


\begin{proof}
Using Proposition \ref{prop401} and the inequality $e^x \geq 1+x$, which holds for every real $x$, we get
\begin{displaymath}
G_n < \frac{p_n}{e} \left( 1 -  \frac{\log p_n + 2.7}{\log^2p_n + \log p_n + 2.7} \right)
\end{displaymath}
for every positive integer $n \geq 218$. Since $\log p_m > 2.7/1.7$ for every positive integer $m \geq 3$, we obtain the required inequality for every positive 
integer $n \geq 218$. For the remaining cases of $n$ we use a computer.
\end{proof}

In the direction of \eqref{4.1}, we find the following upper bound for $G_n$ .

\begin{kor} \label{kor404}
For every positive integer $n \geq 31$, we have
\begin{displaymath}
G_n < \frac{p_n}{e} - \frac{n}{e}\left( 1 - \frac{1}{\log p_n} - \frac{1}{\log^2p_n} - \frac{3.69}{\log^3p_n}\right).
\end{displaymath}
In particular, for every real $\gamma$ with $0 < \gamma < 1/e$ there is a positive integer $n_2 = n_2(\gamma)$ so that $G_n < p_n/e - \gamma n$ for every 
positive integer $n \geq 
n_2$.
\end{kor}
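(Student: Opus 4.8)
The plan is to deduce the bound by combining the upper estimate for $G_n$ from Proposition~\ref{prop403} with the explicit comparison of $p_n$ and $n$ already recorded in inequality~\eqref{3.11}.

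First I would rewrite Proposition~\ref{prop403}: for every positive integer $n \geq 47$ one has $G_n < \frac{p_n}{e} - \frac{p_n}{e\log p_n}$. Hence it suffices to replace the subtracted term $\frac{p_n}{e\log p_n}$ by the smaller quantity $\frac{n}{e}\left(1 - \frac{1}{\log p_n} - \frac{1}{\log^2 p_n} - \frac{3.69}{\log^3 p_n}\right)$, i.e.\ to prove that
\[
\frac{p_n}{\log p_n} > n\left(1 - \frac{1}{\log p_n} - \frac{1}{\log^2 p_n} - \frac{3.69}{\log^3 p_n}\right).
\]
But this is precisely inequality~\eqref{3.11} --- which, by \cite[Corollary~3.3]{axler20172}, states $\log p_n < \frac{p_n}{n} + 1 + \frac{1}{\log p_n} + \frac{3.69}{\log^2 p_n}$ for every positive integer $n \geq 841\,508\,302$ --- after subtracting $1 + \frac{1}{\log p_n} + \frac{3.69}{\log^2 p_n}$ from both sides and multiplying through by $n/\log p_n > 0$. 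Combining the two displays yields the claimed inequality for every positive integer $n \geq 841\,508\,302$. For the finitely many remaining $n$ with $31 \leq n \leq 841\,508\,301$, I would verify the inequality directly with a computer, using $G_n = \exp(\vartheta(p_n)/n)$.

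For the ``in particular'' assertion, fix $\gamma$ with $0 < \gamma < 1/e$. Since $\log p_n \to \infty$, the factor $1 - \frac{1}{\log p_n} - \frac{1}{\log^2 p_n} - \frac{3.69}{\log^3 p_n}$ tends to $1$, so there is a positive integer $n_2 = n_2(\gamma) \geq 31$ with $\frac{1}{e}\left(1 - \frac{1}{\log p_n} - \frac{1}{\log^2 p_n} - \frac{3.69}{\log^3 p_n}\right) \geq \gamma$ for all $n \geq n_2$; substituting this into the main inequality gives $G_n < p_n/e - \gamma n$ for every $n \geq n_2$. Conceptually nothing here is delicate: the argument consists of two one-line manipulations of results already proved. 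The only laborious point is the finite verification on $31 \leq n \leq 841\,508\,301$, which amounts to evaluating $\vartheta(p_n)$ for $n$ up to roughly $8\cdot 10^{8}$ (equivalently for primes up to about $1.9\cdot 10^{10}$); this is routine by sieving, and if one wanted to shorten it one could invoke Büthe's bound $\vartheta(x) < x$ for $x \leq 10^{19}$ together with an explicit lower bound for $p_n/n$ on the upper part of that range.
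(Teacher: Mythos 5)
Your proposal is correct and follows essentially the same route as the paper, whose proof is exactly the combination of Proposition \ref{prop403} with inequality \eqref{3.11} followed by a finite computer verification; your algebraic reduction (divide \eqref{3.11} by $\log p_n$ after rearranging) is the right way to splice the two together. The only cosmetic difference is the threshold at which the analytic argument takes over ($841\,508\,302$ in your write-up versus $456\,441\,574$ in the paper, reflecting the actual range of validity of \eqref{3.11}), which merely enlarges the finite check.
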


\begin{proof}
We use \eqref{3.11} and Proposition \ref{prop403} to get that the required inequality holds for every positive integer $n \geq 456\,441\,574$. We conclude by 
direct computation.
\end{proof}

\begin{rema}
The second part of Corollary \ref{kor404} proves a conjecture stated by Hassani \cite[Conjecture 4.3]{hassani2013}.
\end{rema}

Next, we find new lower bounds for $G_n$. In view of Proposition \ref{prop205}, Kourbatov \cite[Theorem 2]{kourbatov} used explicit estimates for the prime 
counting function $\pi(x)$ and Chebyshev's $\vartheta$-function to find that the inequality
\begin{displaymath}
G_n > \frac{p_n}{\exp(1 + \frac{1}{\log p_n} + \frac{4.83}{\log^2 p_n})}
\end{displaymath}
holds for every positive integer $n \geq 3\,439$. In the next proposition, we give two sharper lower bounds.

\begin{prop} \label{prop405}
For every positive integer $n \geq 74\,004\,585$, we have
\begin{equation}
G_n > \frac{p_n}{\exp(1 + \frac{1}{\log p_n} + \frac{3.84}{\log^2 p_n})}, \tag{4.2} \label{4.2}
\end{equation}
and for every positive integer $n$, we have
\begin{displaymath}
G_n > \frac{p_n}{\exp(1 + \frac{1}{\log p_n} + \frac{3}{\log^2p_n} + \frac{213}{\log^3p_n})}.
\end{displaymath}
\end{prop}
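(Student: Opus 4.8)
The plan is to exploit the identity \eqref{2.5}, namely $G_n = p_n/e^{D(n)}$, exactly as in the proof of Proposition \ref{prop401}, but now paired with the \emph{upper} bounds for $D(n)$ established in Proposition \ref{prop305}. The key elementary observation is that since $p_n > 0$ and the exponential function is strictly increasing, for any real-valued expression $U(n)$ the inequality
\begin{displaymath}
G_n > \frac{p_n}{\exp(U(n))}
\end{displaymath}
is equivalent to $D(n) < U(n)$. So both claims of the proposition reduce immediately to the corresponding upper estimates for $D(n)$.

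For the first inequality I would take $U(n) = 1 + 1/\log p_n + 3.84/\log^2 p_n$ and invoke \eqref{3.9}, which asserts precisely $D(n) < U(n)$ for every positive integer $n \geq 74\,004\,585$; substituting this into \eqref{2.5} yields \eqref{4.2} on the same range. For the second inequality I would instead set $U(n) = 1 + 1/\log p_n + 3/\log^2 p_n + 213/\log^3 p_n$ and apply \eqref{3.10}, which holds for every positive integer $n$; feeding this into \eqref{2.5} gives the stated bound with no restriction on $n$.

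There is essentially no obstacle here: the substantive work has already been done in Subsection 3.1, and what remains is only the formal manipulation of \eqref{2.5}. The one point worth stating explicitly is the direction of the inequality under the map $x \mapsto p_n/e^x$, which is order-reversing and therefore converts an upper bound for $D(n)$ into a lower bound for $G_n$ — exactly the pattern already used (with the roles of "upper" and "lower" swapped) in Proposition \ref{prop401}.
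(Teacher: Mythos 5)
Your proposal is correct and is exactly the argument the paper gives: it combines the identity \eqref{2.5} with the two upper bounds for $D(n)$ from Proposition \ref{prop305}, using that $x \mapsto p_n/e^x$ is order-reversing. Nothing is missing.
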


\begin{proof}
We use \eqref{2.5} and Proposition \ref{prop305} to obtain the required inequalities.
\end{proof}

In order to derive a lower bound for $G_n$ in the direction of \eqref{4.1}, we first establish the following result.

\begin{prop} \label{prop406}
For every positive integer $n$, we have
\begin{displaymath}
G_n > \frac{p_n}{e}\left( 1 - \frac{1}{\log p_n} - \frac{4.74}{\log^2p_n}\right).
\end{displaymath}
\end{prop}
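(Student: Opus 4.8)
The plan is to derive the bound from the identity $G_n = p_n/e^{D(n)} = (p_n/e)\,e^{1-D(n)}$ of \eqref{2.5}, together with the explicit upper bounds for $D(n)$ furnished by Proposition \ref{prop305} (equivalently, the lower bounds for $G_n$ recorded in Proposition \ref{prop405}) and the elementary inequality $e^t \ge 1+t$, valid for every real $t$.

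First I would dispatch the range $n \ge 74\,004\,585$. There \eqref{3.9} gives $D(n) < 1 + 1/\log p_n + 3.84/\log^2 p_n$, so that $1 - D(n) > -1/\log p_n - 3.84/\log^2 p_n$, and applying $e^t \ge 1+t$ with $t = 1-D(n)$ yields
\begin{displaymath}
G_n = \frac{p_n}{e}\,e^{1-D(n)} \ge \frac{p_n}{e}\bigl(2 - D(n)\bigr) > \frac{p_n}{e}\left(1 - \frac{1}{\log p_n} - \frac{3.84}{\log^2 p_n}\right) \ge \frac{p_n}{e}\left(1 - \frac{1}{\log p_n} - \frac{4.74}{\log^2 p_n}\right),
\end{displaymath}
the last step being just $3.84 \le 4.74$. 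The same conclusion follows at one stroke from the first inequality of Proposition \ref{prop405}.

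For the remaining range $1 \le n < 74\,004\,585$ I would fall back on a direct computer verification: keeping the partial sum $\sum_{k\le n}\log p_k$ and updating it prime by prime, one checks at each $n$ that $\tfrac{1}{n}\sum_{k\le n}\log p_k \ge \log(p_n/e) + \log(1 - 1/\log p_n - 4.74/\log^2 p_n)$, i.e.\ that $\log G_n$ dominates the logarithm of the claimed lower bound; for the smallest values of $n$, those with $\log p_n < (1+\sqrt{19.96})/2$ (that is, $n \le 6$), the claimed bound is negative and the inequality is trivial.

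The main obstacle is the sheer length of this finite check, forced by the fact that the sharp estimate \eqref{3.9} is available only from $n \ge 74\,004\,585$ onward: the all-$n$ estimate \eqref{3.10}, $D(n) < 1 + 1/\log p_n + 3/\log^2 p_n + 213/\log^3 p_n$, is too weak to bridge the gap, since beating $1 - 1/\log p_n - 4.74/\log^2 p_n$ through it would require $\log p_n$ of order $213/1.74$, vastly larger than $\log p_{74\,004\,585}$. The constant $4.74$ — rather than the $3.84$ that the asymptotic argument produces — is precisely the slack left in the statement so that the computation over $1 \le n < 74\,004\,585$ clears comfortably; that computation, while large, is otherwise routine.
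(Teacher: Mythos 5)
Your proposal is correct and follows essentially the same route as the paper: both start from the identity $G_n = p_n/e^{D(n)}$, invoke the bound $D(n) < 1 + 1/\log p_n + 3.84/\log^2 p_n$ of Proposition \ref{prop305} (equivalently the lower bound \eqref{4.2}) on the range $n \geq 74\,004\,585$, linearize the exponential, and dispose of the remaining finite range by computer. The only difference is cosmetic: the paper expands $\exp(1/x + 3.84/x^2)$ via $e^t < 1 + t + 2t^2/3$ and restricts to $\log p_n \geq 23.72$ in order to absorb the higher-order terms into the constant $4.74$, whereas your direct application of $e^t \geq 1+t$ to $t = 1 - D(n)$ is simpler and in fact yields the sharper constant $3.84$ throughout the analytic range, confirming your reading that $4.74$ is slack reserved for the finite verification.
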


\begin{proof}
First, we consider the case where $n \geq 883\,051\,281 = \pi(e^{23.72}) + 1$. It is easy to see that
\begin{equation}
e^t < 1 + t + \frac{2t^2}{3} \tag{4.3} \label{4.3}
\end{equation}
for every $t < \log(4/3)$. Hence, we obtain
\begin{displaymath}
\exp \left( \frac{1}{x} + \frac{3.84}{x^2} \right) < 1 + \frac{1}{x} + \frac{13.52}{3x^2} + \frac{5.12}{x^3} + \frac{9.8304}{x^4}
\end{displaymath}
for every $x \geq 6$. Now, if $x \geq 23.72$, then $5.12/x + 9.8304/x^2 < 0.23333$ and we get
\begin{equation}
\exp \left( \frac{1}{x} + \frac{3.84}{x^2} \right) < 1 + \frac{1}{x} + \frac{4.74}{x^2} \tag{4.4} \label{4.4}
\end{equation}
for every $x \geq 23.72$. Since $\log p_n \geq 23.72$, it follows from \eqref{4.2} and the inequality \eqref{4.4} that
\begin{displaymath}
G_n > \frac{p_n}{e}\left( 1 - \frac{\log p_n + 4.74}{\log^2 p_n + \log p_n + 4.74} \right).
\end{displaymath}
Since the right-hand side of the last inequality is greater then the right-hand side of the required inequality, the corollary is proved for every positive 
integer $n \geq 883\,051\,281$. A computer check shows that the asserted inequality holds for every positive integer $n$ with $1 \leq n \leq 64\,881\,103$ as 
well.
\end{proof}

In view of \eqref{4.1}, Hassani \cite[Corollary 4.2]{hassani2013} found that
\begin{displaymath}
G_n > \frac{p_n}{e} - 2.37n
\end{displaymath}
for every positive integer $n$. The following corollary improves this inequality.

\begin{kor} \label{kor407}
For every positive integer $n \geq 3$, we have
\begin{displaymath}
G_n > \frac{p_n}{e} - \frac{n}{e}\left( 1 + \frac{3.74}{\log p_n} - \frac{5.74}{\log^2p_n} - \frac{7.59}{\log^3p_n}\right).
\end{displaymath}
In particular, for every $\delta > 1/e$ there is a positive integer $n_3 = n_3(\delta)$ so that $G_n > p_n/e - \delta n$ for every positive integer $n \geq 
n_3$.
\end{kor}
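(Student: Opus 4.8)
=== PROOF PROPOSAL FOR COROLLARY \ref{kor407} ===

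The plan is to proceed exactly as in the proof of Corollary \ref{kor404}, but starting from the lower bound for $G_n$ supplied by Proposition \ref{prop406} instead of the upper bound from Proposition \ref{prop403}. First I would take the inequality $G_n > (p_n/e)(1 - 1/\log p_n - 4.74/\log^2 p_n)$ from Proposition \ref{prop406}, which is valid for every positive integer $n$, and rewrite the factor $p_n/e$ using the identity $p_n = (p_n/n)\cdot n$. This turns the bound into $G_n > (p_n/e) - (n/e)(p_n/n)(1/\log p_n + 4.74/\log^2 p_n)$, so the task reduces to bounding $(p_n/n)(1/\log p_n + 4.74/\log^2 p_n)$ from above by $1 + 3.74/\log p_n - 5.74/\log^2 p_n - 7.59/\log^3 p_n$ for all sufficiently large $n$.

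Next I would bring in an explicit upper bound for $p_n/n$ in terms of $\log p_n$. The natural choice is \eqref{3.11}, i.e. $p_n/n < \log p_n - 1 - 1/\log p_n - 3.69/\log^2 p_n$, which holds for $n \geq 841\,508\,302$ (this is the same input used in Corollary \ref{kor404}). Substituting this into $(p_n/n)(1/\log p_n + 4.74/\log^2 p_n)$ and expanding, the leading term $\log p_n \cdot (1/\log p_n) = 1$ matches the constant $1$ on the other side, and the remaining terms are all negative powers of $\log p_n$; collecting them should yield precisely $1 + 3.74/\log p_n - \dots$ up to a tail of the form $c/\log^3 p_n + c'/\log^4 p_n$ with the right sign once $\log p_n$ is large enough. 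So for $n$ above some explicit threshold (which will be at least $841\,508\,302$, perhaps somewhat larger to absorb the tail terms) the required inequality follows.

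The second part — that for every $\delta > 1/e$ there is $n_3$ with $G_n > p_n/e - \delta n$ — is then immediate: the bracketed factor $1 + 3.74/\log p_n - 5.74/\log^2 p_n - 7.59/\log^3 p_n$ tends to $1$ as $n \to \infty$ (since $p_n \to \infty$), so for $n$ large the factor is below $e\delta$, giving $G_n > p_n/e - \delta n$.

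The only real obstacle is the small-$n$ range: the clean argument via \eqref{3.11} only kicks in around $n \approx 8.4 \times 10^8$, whereas the corollary is asserted for all $n \geq 3$. I would dispatch the intermediate values by a direct computer check, exactly as the paper does repeatedly elsewhere (e.g. in Corollary \ref{kor404} and Proposition \ref{prop406}, where ranges up to $\sim 6.5\times 10^7$ are verified computationally); one should check that the threshold from the analytic step is small enough that the computational range is feasible, adjusting constants in the intermediate estimates if necessary. No new ideas beyond those already deployed for Corollary \ref{kor404} are needed.
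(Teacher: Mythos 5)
Your overall strategy is the paper's: start from Proposition \ref{prop406}, convert the subtracted terms $p_n/\log p_n$ and $p_n/\log^2 p_n$ into multiples of $n$ via an explicit bound relating $p_n/n$ to $\log p_n$, and dispatch small $n$ by computer. However, there is a concrete error in the key analytic step: you quote \eqref{3.11} as giving $p_n/n < \log p_n - 1 - 1/\log p_n - 3.69/\log^2 p_n$, but \eqref{3.11} reads $\log p_n < p_n/n + 1 + 1/\log p_n + 3.69/\log^2 p_n$, which rearranges to a \emph{lower} bound $p_n/n > \log p_n - 1 - 1/\log p_n - 3.69/\log^2 p_n$. That is the direction needed in Corollary \ref{kor404} (an upper bound for $G_n$, where the subtracted quantity must be bounded from below), but it is the wrong direction here: since you are subtracting $(n/e)(p_n/n)(1/\log p_n + 4.74/\log^2 p_n)$ and want a lower bound for $G_n$, you need an \emph{upper} bound for $p_n/n$. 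As written, the step fails.

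The fix is exactly the paper's choice: use \eqref{3.2}, i.e.\ $p_n/n < \log p_n - 1 - 1/\log p_n - 2.85/\log^2 p_n$ for $n \geq 2\,324\,692$. With that substitution the product expands to
\begin{displaymath}
1 + \frac{3.74}{\log p_n} - \frac{5.74}{\log^2 p_n} - \frac{7.59}{\log^3 p_n} - \frac{13.509}{\log^4 p_n},
\end{displaymath}
so the stated constants are matched exactly and the final term is negative and can simply be dropped --- there is no tail to ``absorb'' and no need to enlarge the threshold. This also reduces the analytic threshold from $841\,508\,302$ to $2\,324\,692$, making the computer verification for $3 \leq n \leq 2\,324\,692$ much lighter. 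Your argument for the second assertion (the factor tends to $1$, so it is eventually below $e\delta$) is fine.
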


\begin{proof}
First, we consider the case where $n \geq 2\,324\,692$. We use \eqref{3.2} and the inequality obtained in Proposition \ref{prop406} to get that
\begin{equation}
G_n > \frac{p_n}{e} - \frac{n}{e}\left( 1 - \frac{1}{\log p_n} - \frac{1}{\log^2p_n} -  \frac{2.85}{\log^3p_n}\right) - \frac{4.74p_n}{e\log^2 p_n}. \tag{4.5}
\label{4.5}
\end{equation}
The inequality \eqref{3.2} implies that
\begin{displaymath}
-\frac{4.74p_n}{e\log^2 p_n} > - \frac{4.74n}{e \log p_n} \left( 1 - \frac{1}{\log p_n} - \frac{1}{\log^2 p_n} \right).
\end{displaymath}
We apply this inequality to \eqref{4.5} and obtain the required inequality. For every positive integer $n$ satisfying $3 \leq n \leq 2\,324\,692$ we check the 
required inequality with a 
computer.
\end{proof}

\begin{rema}
Corollary \ref{kor404} and Corollary \ref{kor407} yield a more accurate asymptotic formula for $G_n$ than in \eqref{4.1}, namely that
\begin{displaymath}
G_n = \frac{p_n}{e} - \frac{n}{e} + O \left( \frac{n}{\log p_n} \right).
\end{displaymath}
\end{rema}

\section{New estimates for the quantity $\log(1 + 2R(n)/p_n)$}

First, we recall the definition of $R(n)$, namely $R(n) = \sum_{k \leq n}p_k/n - p_n/2$. Hassani \cite[Corollary 1.5]{hassani2013} proved that
\begin{equation}
- \frac{15}{2 \log n} < \log \left( 1 + \frac{2R(n)}{p_n} \right)  < - \frac{5}{36 \log n}, \tag{5.1} \label{5.1}
\end{equation}
where the left-hand side inequality holds for every positive integer $n \geq 2$, and the right-hand side inequality holds for every positive integer $n \geq 
10$. In Proposition \ref{prop207}, we gave a more suitable approximation for the quantity $\log(1 + 2R(n)/p_n)$ for $n \to \infty$. In the direction of  this 
approximation, we improve the inequalities found in \eqref{5.1}. The first proposition is about a lower bound for $\log(1 + 2R(n)/p_n)$.

\begin{prop} \label{prop501}
For every positive integer $n \geq 26\,220$, we have
\begin{equation}
\log \left( 1 + \frac{2R(n)}{p_n} \right) > - \frac{1}{2 \log n} + \frac{\log \log n - 2.25}{2 \log^2 n} - \frac{(\log \log n)^2 - 4.5 \log \log n + 
22.51/3}{2 \log^3 n}. \tag{5.2} \label{5.2}
\end{equation}
\end{prop}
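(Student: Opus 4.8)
The key reduction is that, directly from $R(n) = A_n - p_n/2$, one has
\begin{displaymath}
1 + \frac{2R(n)}{p_n} = \frac{2A_n}{p_n},
\end{displaymath}
so Proposition \ref{prop501} is equivalent to a sufficiently sharp lower bound for $\log(2A_n/p_n)$, i.e. to a lower bound for the arithmetic mean $A_n$ of the first $n$ primes relative to $p_n$. (One could instead start from Hassani's identity \eqref{1.1}; since $G_n = p_n/e^{D(n)}$ by \eqref{2.5}, the $D(n)$-terms cancel and one is again left with exactly $\log(2A_n/p_n)$.) The plan is therefore threefold: (i) produce an explicit estimate $2A_n/p_n \geq 1 - 1/(2\log p_n) + \cdots$ expanded in powers of $\log p_n$; (ii) feed it into a truncated expansion of $\log(1-u)$; (iii) replace $\log p_n$ by $\log n$.

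For step (i), set $u := 1 - 2A_n/p_n$, which is positive and of order $1/(2\log p_n)$. Starting from an explicit lower bound for $\sum_{k \leq n} p_k$ taken from the author's work on the sum of the first $n$ primes (in the direction of the asymptotic formula $A_n = p_n - \sum_i r_i p_n^2/(n\log^i p_n) + \cdots$, cf. \cite{axler2015}, see also \cite{axler2017}) and using \eqref{2.3} with $x = p_n$ to rewrite $p_n/n$ as $\log p_n - 1 - 1/\log p_n - 3/\log^2 p_n - \cdots$, one obtains an explicit upper bound $u \leq 1/(2\log p_n) + c_2/\log^2 p_n + c_3/\log^3 p_n + \cdots$ valid for all sufficiently large $n$. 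For step (ii) one uses the elementary inequality $\log(1-u) \geq -u - u^2/2 - u^3/(3(1-u))$, valid for $0 \leq u < 1$; combining it with the bound on $u$ and pushing the resulting terms of order $1/\log^4 p_n$ to the safe side yields an inequality of the shape
\begin{displaymath}
\log\left( 1 + \frac{2R(n)}{p_n} \right) > -\frac{1}{2\log p_n} + \frac{a_2}{\log^2 p_n} - \frac{a_3}{\log^3 p_n}
\end{displaymath}
with explicit constants $a_2, a_3$, for every $n$ beyond an explicit threshold.

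For step (iii) one substitutes the explicit Cipolla-type inequalities already used in the proof of Proposition \ref{prop306}, namely $1/\log p_n \geq 1/\log n - \log\log n/\log^2 n + \cdots$ together with its weaker form \eqref{3.15} and the iterate \eqref{3.16} (from \cite{axler20174}), paying attention to the sign of each of the three resulting terms. The $\log\log n$-contributions then combine to produce precisely the numerator $\log\log n - 2.25$ in the $1/\log^2 n$-term and the polynomial $(\log\log n)^2 - 4.5\log\log n + 22.51/3$ in the $1/\log^3 n$-term. This proves \eqref{5.2} for all $n \geq N$ with $N$ explicit; the remaining finitely many integers $n$ with $26\,220 \leq n < N$ are verified directly, using Büthe's verified range for $\vartheta(x)$ (cf. \cite{buethe}) and tabulated values of $\sum_{k \leq n} p_k$ where needed.

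I expect the main difficulty to be the error-term bookkeeping rather than any single idea: each of the three substitutions --- rewriting $p_n/n$, expanding $\log(1-u)$, and passing from $\log p_n$ to $\log n$ --- injects lower-order corrections, and the target constants $2.25$ and $22.51/3$ are tight enough that one must use the sharpest available explicit prime-sum and prime-counting estimate at every step and track carefully which quantities are absorbed into the $1/\log^4$-level remainder. Since that choice of estimate simultaneously fixes the threshold $N$, one must also ensure that $N$ is small enough for the computational check down to $26\,220$ to remain feasible.
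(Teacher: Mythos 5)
Your reduction $1 + 2R(n)/p_n = 2A_n/p_n$ is correct, and your three-step plan (bound $A_n/p_n$ in powers of $1/\log p_n$, expand $\log(1-u)$, convert $\log p_n$ to $\log n$) is a legitimate strategy --- indeed it parallels what the paper does for the companion \emph{upper} bound, where Proposition \ref{prop502} is proved in terms of $\log p_n$ and Corollary \ref{kor503} then converts to $\log n$. But it is not the paper's route for this lower bound. The paper never leaves the variable $n$: it takes the explicit lower bound $R(n) > s_1(n)$ with $s_1(x) = -x/4 - x/(4\log x) + x(\log\log x - 4.42)/(4\log^2 x)$ from \cite{axler2017}, Dusart's lower bound $p_n \geq r(n) = n(\log n + \log\log n - 1)$, and the negativity of $R(n)$, so that $\log(1+2R(n)/p_n) > h(n)$ with $h(x) = \log(1+2s_1(x)/r(x))$ a completely explicit elementary function of $x$ alone; the inequality $h(x) > \mathrm{RHS}(x)$ is then settled by a single calculus argument (the difference has negative derivative for $x \geq \exp(\exp(2.4))$ and tends to $0$). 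This avoids both of your conversion steps and the attendant losses in the constants.

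That points to the genuine gap in your proposal: the entire content of \eqref{5.2} is the constants $2.25$ and $22.51/3$, and you assert rather than derive that your chain of substitutions ``combines to produce precisely'' these numerators. Each of your three stages is lossy --- rewriting $p_n/n$ via \eqref{2.3}-type bounds, truncating $\log(1-u)$, and the $\log p_n \to \log n$ passage --- and it is not evident that the accumulated losses still fit under the stated constants; this is presumably why the author chose the direct-in-$n$ argument here. Two further points to fix: for the leading term $-1/(2\log p_n)$ you need a \emph{lower} bound, hence the \emph{upper} bound \eqref{3.17} for $1/\log p_n$, not the lower bounds \eqref{3.14}--\eqref{3.16} from the proof of Proposition \ref{prop306} that you cite (those go the right way only for the terms entering with a positive sign); and B\"uthe's verified range for $\vartheta(x)$ is irrelevant to the finite check, which only requires the values of $p_n$ and $\sum_{k \leq n} p_k$.
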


\begin{proof}
First, we note a result proved by Dusart \cite{dusart99} concerning a lower bound for $p_n$, namely that
\begin{equation}
p_n \geq r(n) \tag{5.3} \label{5.3}
\end{equation}
for every positive integer $n \geq 2$, where $r(x) = x(\log x + \log \log x - 1)$. We set
\begin{displaymath}
s_1(x) = - \frac{x}{4} - \frac{x}{4\log x} + \frac{x(\log \log x - 4.42)}{4 \log^2 x}
\end{displaymath}
and by \cite[Theorem 1.8]{axler2017} and Hassani \cite[Corollary 1.5]{hassani2013}, we obtain that
\begin{equation}
s_1(n) < R(n) < 0 \tag{5.4} \label{5.4}
\end{equation}
for every positive integer $n \geq 256\,376$. Now, we define $h(x) = \log(1 + 2s_1(x)/r(x))$ and show that $h(x)$ is greater than the right-hand side of 
\eqref{5.2}. For this, we set
\begin{align*}
f(y) & = (2\log^3y - 13\log^2y + 33.09 \log y - 29.1575)y^3  \\
& \p{\q\q} + (1.5 \log^4y - 11.5\log^3y + 34.63 \log^2y - 41.1575\log y + 9.9075)y^2 \\ 
& \p{\q\q} + (-0.5 \log^3 y + 1.52 \log^2y + 3.59 \log y - 17.01605)y \\
& \p{\q\q} + 0.75 \log^4 y - 7.94 \log^3 y + 31.07 \log^2y - 53.72605 \log y + 29.84605
\end{align*}
and
\begin{displaymath}
g(y) = y^3 + y^2\log y - 1.5y^2 - 0.5y + 0.5\log y - 2.21.
\end{displaymath}
%Since $2\log^3y - 13\log^2y + 33.09 \log y - 29.1575 \geq 3$ for every $y \geq e^{2.4}$ and $1.5 \log^4y - 11.5\log^3y + 34.63 \log^2y - 41.1575\log y + 
%9.9075 \geq 0$ for every $y \geq 10$, we get
%\begin{align*}
%f(y) & \geq 3y^3 + (-0.5 \log^3 y + 1.52 \log^2y + 3.59 \log y - 17.01605)y \\
%& \p{\q\q} + 0.75 \log^4 y - 7.94 \log^3 y + 31.07 \log^2y - 53.72605 \log y + 29.84605
%\end{align*}
%for every $y \geq e^{2.4}$. Notice that $y^2 \geq 2\log^3 y$ for every $y \geq 1$. Hence,
%\begin{align*}
%f(y) & \geq (11.5 \log^3 y + 1.52 \log^2y + 3.59 \log y - 17.01605)y \\
%& \p{\q\q} + 0.75 \log^4 y - 7.94 \log^3 y + 31.07 \log^2y - 53.72605 \log y + 29.84605
%\end{align*}
%for every $y \geq e^{2.4}$. Now we apply the inequality $y \geq \log y$, which holds for every $y > 0$, to get
%\begin{displaymath}
%f(y) \geq 12.25\log^4 y - 6.42 \log^3y + 34.66 \log^2 y - 70.7421\log y + 29.84605
%\end{displaymath}
%for every $y \geq e^{2.4}$. Now, it is clear that the right-hand side of the last inequality is positive for every $y \geq e^2$. Hence $f(y) > 0$ for every $y
%\geq e^{2.4}$. Similarly, we get that $g(y) > 0$ for every $y \geq e^{2.4}$. Therefore,
It is easy to see that $f(y)$ and $g(y)$ are positive for every $y \geq e^{2.4}$. Hence
\begin{displaymath}
\left( h(x) + \frac{1}{2\log x}  - \frac{\log \log x - 2.25}{2\log^2 x} +  \frac{(\log \log x)^2 - 4.5 \log \log x + 22.51/3}{2 \log^3 x}\right)' = - 
\frac{f(\log x)}{g(\log x)r(x)\log^4 x} < 0
\end{displaymath}
for every $x \geq \exp(\exp(2.4))$. In addition, we have
\begin{displaymath}
\lim_{x \to \infty} \left( h(x) + \frac{1}{2\log x} - \frac{\log \log x - 2.25}{2\log^2 x} + \frac{(\log \log x)^2 - 4.5 \log \log x + 22.51/3}{2 \log^3 x} 
\right) = 0.
\end{displaymath}
So, we get
\begin{displaymath}
h(x) > - \frac{1}{2\log x} + \frac{\log \log x - 2.25}{2\log^2 x} -  \frac{(\log \log x)^2 - 4.5 \log \log x + 22.51/3}{2 \log^3 x}
\end{displaymath}
for every $x \geq \exp(\exp(2.4))$. Together with \eqref{5.3} and \eqref{5.4}, it follows that the desired inequality holds
% \begin{displaymath}
% \log \left( 1 + \frac{2R(n)}{p_n} \right) > - \frac{1}{2 \log n} + \frac{\log \log n - 2.25}{2\log^2n}  -  \frac{(\log \log n)^2 - 4.5 \log \log n + 
% 22.51/3}{2 \log^3 n}
% \end{displaymath}
for every positive integer $n \geq \exp(\exp(2.4))$. The remaining cases are checked with a computer.
\end{proof}

Next, we give an upper bound for $\log(1 + 2R(n)/p_n)$.

\begin{prop} \label{prop502}
For every positive integer $n \geq 6\,077$, we have
\begin{displaymath}
\log \left( 1 + \frac{2R(n)}{p_n} \right) < - \frac{1}{2 \log p_n} - \frac{1}{\log^2 p_n} - \frac{2.9}{2\log^2n \log p_n}.
\end{displaymath}
\end{prop}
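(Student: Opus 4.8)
The plan is to derive the bound from the elementary inequality $\log(1+t) < t$, valid for every $t$ with $-1 < t < 0$, applied with $t = 2R(n)/p_n$. This choice is legitimate: by Hassani \cite[Corollary 1.5]{hassani2013} we have $R(n) < 0$, so $t < 0$, and since $A_n = p_n/2 + R(n) > 0$ we also have $t > -1$. Hence
\[
\log\!\left(1 + \frac{2R(n)}{p_n}\right) < \frac{2R(n)}{p_n},
\]
and it only remains to show that $2R(n)/p_n$ is at most the claimed right-hand side. If a little extra room is needed, one can instead use $\log(1+t) < t - t^2/2$, which is again an upper bound because for $t<0$ every term of the series $\log(1+t)=\sum_{k\ge 1}(-1)^{k+1}t^k/k$ is negative.

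To control $2R(n)/p_n$ I would feed in an explicit upper bound for $R(n)$ taken from \cite{axler2017} — the counterpart on the upper side of the lower bound $s_1(n) < R(n)$ used in the proof of Proposition \ref{prop501} — together with an explicit comparison of $p_n$ with $n$. Since $R(n) < 0$, any upper bound of the form $p_n \le n\,u(n)$ (for instance the weak estimate \eqref{3.4}, valid for $n \ge 7$, or, if that is not sharp enough, a Cipolla/Dusart-type bound) converts the displayed inequality into $\log(1+2R(n)/p_n) \le 2R(n)/(n\,u(n))$; substituting the upper bound for $R(n)$ then reduces the whole statement to an inequality between explicit functions of $\log n$, $\log\log n$ and $\log p_n$. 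How precise an estimate is needed for $R(n)$ and for $u(n)$ — and, in particular, the reason the target mixes $\log n$ and $\log p_n$ and carries the specific coefficients $\tfrac12$, $1$ and $\tfrac{2.9}{2}$ — will be dictated by matching these quantities; the natural expectation is that \cite{axler2017} already provides a bound for $R(n)$ of exactly the right shape, so that this step is essentially a substitution.

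The resulting one-variable inequality is then handled as in the proof of Proposition \ref{prop501}: one checks that the difference of the two sides has a sign-definite derivative and tends to $0$ as the variable tends to infinity, which establishes the inequality once $p_n$ (equivalently $n$) exceeds an explicit threshold, and the finitely many remaining cases $n \ge 6\,077$ are settled by direct computation. As in Proposition \ref{prop501}, one may have to splice together more than one input estimate for $R(n)$ (for example \cite[Theorem 1.8]{axler2017} and \cite[Corollary 1.5]{hassani2013}) in order to cover the full range down to $n = 6\,077$, verifying the overlap with a computer. The only genuine difficulty I foresee is quantitative bookkeeping: ensuring the imported bound for $R(n)$ is sharp enough, and applying the $p_n$-versus-$n$ comparisons in the direction that does not overshoot the target, so that the stated constants are reproduced exactly. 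There is no conceptual obstacle beyond this.
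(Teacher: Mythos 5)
Your overall strategy coincides with the paper's: replace $R(n)$ by an explicit upper bound from \cite{axler2017} (the paper uses \cite[Theorem 1.7]{axler2017} to get $R(n)<s_2(n)<0$ with $s_2(x)=-\tfrac{x}{4}-\tfrac{x}{4\log x}+\tfrac{x(\log\log x-2.9)}{4\log^2 x}$), apply $\log(1+t)\leq t$, convert $n/p_n$ into powers of $1/\log p_n$, and settle the remaining range by computer; your justification that $1+2R(n)/p_n=2A_n/p_n>0$ is fine. However, the two steps you leave to ``quantitative bookkeeping'' are precisely where the proof lives, and your default choices would not work. The weak comparison \eqref{3.4}, $p_n\leq n\log p_n$, yields only $2s_2(n)/p_n\leq-\tfrac{1}{2\log p_n}-\tfrac{1}{2\log n\log p_n}+\tfrac{\log\log n-2.9}{2\log^2 n\log p_n}$, whose second term is asymptotically $-\tfrac{1}{2\log^2 p_n}$; the target demands $-\tfrac{1}{\log^2 p_n}$, so \eqref{3.4} is provably insufficient. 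The paper instead uses Dusart's second-order bounds from \cite{pd}, $\pi(x)\geq x/\log x+x/\log^2 x$ for the negative part and $\pi(x)\leq x/\log x+2x/\log^2 x$ for the positive part $\tfrac{n(\log\log n-2.9)}{4\log^2 n}$ of $s_2(n)$, at $x=p_n$ --- note the two parts need the comparison in \emph{opposite} directions, which your blanket ``any upper bound $p_n\leq n\,u(n)$'' obscures (it can be salvaged only because $s_2(n)<0$ as a whole).

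The more serious gap is the endgame. After these substitutions you do not have a one-variable inequality: both $\log n$ and $\log p_n$ occur, and they are not tied by a closed formula, so the monotone-difference-with-limit-zero argument of Proposition \ref{prop501} (which works there because $r(n)$ and $s_1(n)$ are explicit functions of $n$ alone) does not apply as you describe. Concretely, one must show that $-\tfrac{1}{2\log n\log p_n}+\tfrac{\log\log n-2.9}{2\log^2 n\log p_n}$ is at most $-\tfrac{1}{2\log^2 p_n}-\tfrac{2.9}{2\log^2 n\log p_n}$ plus admissible error, and this is exactly what the explicit bound \eqref{3.17} for $1/\log p_n$ in terms of $n$ delivers; no estimate for $R(n)$ alone, however sharp, produces the mixed $\log n$/$\log p_n$ terms of the target. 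Crude one-sided replacement of $\log p_n$ by $\log n$ in the target fails, since $\log(1+2R(n)/p_n)\sim-\tfrac{1}{2\log n}+\tfrac{\log\log n}{2\log^2 n}$ exceeds $-\tfrac{1}{2\log n}-\tfrac{1}{\log^2 n}$ for large $n$. With Dusart's two-sided $\pi$-bounds and \eqref{3.17} supplied, your plan becomes the paper's proof.
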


\begin{proof}
First, we consider the case where $n \geq 78\,150\,372 \geq \exp(\exp(2.9))$. We define
\begin{displaymath}
s_2(x) = - \frac{x}{4} - \frac{x}{4\log x} + \frac{x(\log \log x - 2.9)}{4 \log^2 x}.
\end{displaymath}
By \cite[Theorem 1.7]{axler2017} and the definition of $R(n)$, we obtain $R(n) < s_2(n) < 0$. Hence,
\begin{displaymath}
\log \left( 1 + \frac{2R(n)}{p_n} \right) < \log \left( 1 + \frac{2s_2(n)}{p_n} \right).
\end{displaymath}
Since $2s_2(n)/p_n > -1$, we apply the inequality $\log(1+x) \leq x$, which holds for every $x > -1$, to get that
\begin{displaymath}
\log \left( 1 + \frac{2R(n)}{p_n} \right) < -\frac{n}{2p_n} - \frac{n}{2p_n \log n} + \frac{n(\log \log n - 2.9)}{2 p_n\log^2n }.
\end{displaymath}
Now, we use a lower bound for the prime counting function given by Dusart \cite[Th\'{e}or\`{e}me 1.10]{pd}, namely that $\pi(x) \geq x/\log x + 
x/\log^2x$ for every $x \geq 599$, with $x = p_n$ to obtain that
\begin{equation}
\log \left( 1 + \frac{2R(n)}{p_n} \right)  < - \frac{1}{2\log p_n} - \frac{1}{2\log^2 p_n} - \frac{1}{2\log n \log p_n} - \frac{1}{2\log n \log^2 p_n} + 
\frac{n(\log \log n - 2.9)}{2 p_n\log^2n}. \tag{5.5} \label{5.5}
\end{equation}
From Dusart \cite[Th\'{e}or\`{e}me 1.10]{pd} follows that $\pi(x) \leq x/\log x + 2x/\log^2x$ for every $x > 1$. Together with \eqref{5.5} and $\log \log n 
\geq 2.9$, we get that
\begin{align*}
\log \left( 1 + \frac{2R(n)}{p_n} \right) & < - \frac{1}{2\log p_n} - \frac{1}{2\log^2 p_n} - \frac{1}{2\log n \log p_n} - \frac{1}{2\log n \log^2 p_n} \\ 
& \p{\q\q} + \frac{\log \log n - 2.9}{2 \log^2n \log p_n} + \frac{\log \log n - 2.9}{\log^2n \log^2 p_n}. \nonumber
\end{align*}
Now we use \eqref{3.17} to get
% , we have
% \begin{displaymath}
% -\frac{1}{\log n} \leq - \frac{1}{\log p_n} - \frac{\log \log n}{\log^2 n} + \frac{(\log \log n)^2 - \log \log n + 1}{\log^2 n \log p_n} + \frac{P_8(\log 
% \log n)}{2 \log^3n\log p_n} - \frac{P_9(\log \log n)}{2 \log^4n\log p_n}
% \end{displaymath}
% for every $n \geq 2$, where $P_8(x) = 3x^2 - 6x + 5.2$ and $P_9(x) =  x^3 - 6x^2 + 11.4x - 4.2$. Since $P_9(x) > 0$ for every $x \geq 0.5$, we get
% \begin{displaymath}
% - \frac{1}{\log n} \leq - \frac{1}{\log p_n} - \frac{\log \log n}{\log^2 n} + \frac{(\log \log n)^2 - \log \log n + 1}{\log^2 n \log p_n} + \frac{P_8(\log 
% \log n)}{2 \log^3n\log p_n}.
% \end{displaymath}
% Applying this inequality to \eqref{6.6}, we get
\begin{align*}
\log \left( 1 + \frac{2R(n)}{p_n} \right) & < - \frac{1}{2\log p_n} - \frac{1}{\log^2 p_n} + \frac{(\log \log n)^2 + \log \log n - 4.8}{2 \log^2 n \log^2 p_n} 
+ \frac{P_8(\log \log n)}{4 \log ^3n\log^2 p_n} \\
& \p{\q\q} - \frac{1}{2\log n \log^2 p_n} - \frac{2.9}{2\log^2 n \log p},
\end{align*}
%Since
%\begin{displaymath}
%\frac{1}{2 \log m} + \frac{3.2}{2\log^2 m} \geq \frac{(\log \log m)^2 + \log \log m}{2\log^2 m} + \frac{P_8(\log \log m)}{4\log^3m}
%\end{displaymath}
%for every $m \geq 3$, we obtain
which implies the required inequality for every positive integer $n \geq 78\,150\,372$. For smaller values for $n$, we use a computer.
\end{proof}

In the direction of Proposition \ref{prop207}, we find the following upper bound for $\log(1 + 2R(n)/p_n)$ in terms of $n$, which leads to an improvement of 
the right-hand side inequality of \eqref{5.1}. 

\begin{kor} \label{kor503}
For every positive integer $n \geq 92$, we have
\begin{displaymath}
\log \left( 1 + \frac{2R(n)}{p_n} \right) < - \frac{1}{2 \log n} + \frac{\log \log n - 2}{2\log^2 n} + \frac{4\log \log n - 2.9}{\log^3 n} + 
\frac{2.9\log\log n}{2\log^4 n}.
\end{displaymath}
\end{kor}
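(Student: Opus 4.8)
The plan is to deduce the corollary from Proposition \ref{prop502} by replacing the occurrences of $p_n$ on its right-hand side by $n$, using the explicit comparisons between $\log p_n$ and $\log n$ already exploited in Section 3; this parallels the way Corollary \ref{kor307} and Proposition \ref{prop306} are obtained from the $p_n$-estimates for $D(n)$. Write $\ell = \log n$ and $L = \log\log n$. For $n \geq 6\,077$, Proposition \ref{prop502} gives
\[
\log\left(1 + \frac{2R(n)}{p_n}\right) < -\frac{1}{2\log p_n} - \frac{1}{\log^2 p_n} - \frac{2.9}{2\ell^2 \log p_n},
\]
and all three summands on the right are negative, so each must be bounded above by controlling $\log p_n$ from below (equivalently, $1/\log p_n$ from below). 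For the dominant term $-1/(2\log p_n)$ there is essentially no slack, so I would feed in a sharp Cipolla-type estimate such as \eqref{3.14}, which after rearranging reads $\log p_n \leq \ell + L + (L-1)/(\ell - L)$, giving $-1/(2\log p_n) \leq -\ell/(2(\ell^2 + L\ell + L - 1))$. Expanding this in powers of $1/\ell$ reproduces exactly the first two terms $-1/(2\ell) + L/(2\ell^2)$ of the target, plus a third-order term $-(L^2 - L + 1)/(2\ell^3)$ and a controlled remainder.

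For the remaining two summands, which are already of order $1/\ell^3$, weaker estimates suffice. From \eqref{3.15} (or from a standard upper bound for $p_n$ in terms of $n$) one gets $1/\log^2 p_n \geq 1/\ell^2 - 2L/\ell^3 + L^2/\ell^4$, hence $-1/\log^2 p_n \leq -1/\ell^2 + 2L/\ell^3$; and from the trivial $\log p_n > \ell$ together with a crude upper bound like $\log p_n \leq 2\ell$ one controls both $-2.9/(2\ell^2\log p_n)$ and the leftover $1/\log p_n$ sitting in the correction produced by \eqref{3.14}. Collecting all contributions over a common power of $\ell$, the coefficient of $1/\ell$ is $-1/2$ and the coefficient of $1/\ell^2$ is $(L-2)/2$, matching the corollary exactly; what is left is a one-variable inequality, polynomial in $\ell$ with coefficients polynomial in $L = \log\ell$, which one verifies holds for every $n$ beyond some explicit threshold $N$. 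At the $1/\ell^3$ level this comes down to checking something like $-\tfrac12 L^2 + \tfrac52 L - c \leq 4L - 2.9$ for a small constant $c$, which holds once $\log\log n$ exceeds a modest bound. Finally, for $92 \leq n < N$ — in particular for $92 \leq n \leq 6\,076$, where Proposition \ref{prop502} does not apply — I would verify the asserted inequality directly by computer.

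The main obstacle is precisely that the first two orders of the claimed bound coincide with the first two orders produced by Proposition \ref{prop502}, so there is no room to be wasteful at the top: the argument survives only because the next Cipolla correction $-(L^2 - L + 1)/(2\ell^3)$ has the correct (negative) sign and outweighs the positive $2L/\ell^3$ coming from $-1/\log^2 p_n$ as soon as $\log\log n$ is bounded below by a small absolute constant. Consequently the choice of which comparison between $\log p_n$ and $\log n$ to insert into each term, and the exact numerical constants $2$, $2.9$ and $4$, all matter. One must also reconcile the ranges of validity of the auxiliary estimates, since some of the sharp lower bounds for $1/\log p_n$ are recorded in the excerpt only for very large $n$; it may therefore be necessary to run the analytic step with somewhat weaker but more widely valid estimates so that the threshold $N$ drops into a range that can be settled by the concluding computation.
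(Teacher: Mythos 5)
Your proposal follows essentially the same route as the paper: deduce the bound from Proposition \ref{prop502} by replacing $1/\log p_n$ termwise with the lower bounds \eqref{3.14}--\eqref{3.16} (the paper uses only the cruder \eqref{3.15}, applied twice, which is why its third-order constant $4\log\log n-2.9$ is so generous and why no sharp Cipolla correction is actually needed), finishing with a computer check for $92 \le n \le 6\,076$. The only blemish is the verbal slip ``controlling $\log p_n$ from below (equivalently, $1/\log p_n$ from below)''---these are opposite, and what your displayed inequalities correctly use is an upper bound for $\log p_n$, i.e.\ a lower bound for $1/\log p_n$.
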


\begin{proof}
First we consider the case where $n \geq 6\,077$. 
%Proposition \ref{prop604} implies that
%\begin{equation}
%\log \left( 1 + \frac{2R(n)}{p_n} \right) < - \frac{1}{2 \log p_n} - \frac{1}{\log^2 p_n}. \tag{6.7} \label{6.7}
%\end{equation}
From \eqref{3.16} follows that
\begin{equation}
- \frac{1}{\log p_n} \leq - \frac{1}{\log n} + \frac{\log \log n}{\log^2 n}. \tag{5.6} \label{5.6}
\end{equation}
Applying this to Proposition \ref{prop502}, we get that the inequality
\begin{displaymath}
\log \left( 1 + \frac{2R(n)}{p_n} \right) < - \frac{1}{2 \log n} + \frac{\log \log n}{2\log^2 n} - \frac{1}{\log n \log p_n} + \frac{\log \log n}{\log^2 n 
\log p_n} - \frac{2.9}{2\log^3n} + \frac{2.9\log\log n}{2\log^4n}.
\end{displaymath}
Again we use \eqref{5.6} to obtain that the required inequality holds for every positive integer $n \geq 6\,077$. We conclude by direct computation.
% \begin{displaymath}
% \log \left( 1 + \frac{2R(n)}{p_n} \right) < - \frac{1}{2 \log n} + \frac{\log \log n}{2\log^2 n} - \frac{1}{\log^2 n} + \frac{\log \log n}{\log^3 n} + 
% \frac{\log \log n}{\log^2 n \log p_n},
% \end{displaymath}
% which concludes the proof for every $n \geq ???$. For every $259 \leq n \leq ???$, we check the the required inequality with a computer.
\end{proof}

Compared with Proposition \ref{prop207} we establish the following more precise result.

\begin{kor} \label{kor504}
We have
\begin{displaymath}
\log \left( 1 + \frac{2R(n)}{p_n} \right) = - \frac{1}{2 \log n}  + \frac{\log \log n}{2 \log^2 n} + O \left( \frac{1}{\log^2 n} \right).
\end{displaymath}
\end{kor}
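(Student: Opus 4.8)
The plan is to sandwich $\log(1 + 2R(n)/p_n)$ between the explicit lower bound of Proposition~\ref{prop501} and the explicit upper bound of Corollary~\ref{kor503}, and then to observe that in both of these inequalities every term beyond $-1/(2\log n) + \log\log n/(2\log^2 n)$ is already $O(1/\log^2 n)$. Since the statement is purely asymptotic, the finitely many values of $n$ not covered by Proposition~\ref{prop501} (those with $n < 26\,220$) or by Corollary~\ref{kor503} (those with $n < 92$) play no role.

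First I would recall Corollary~\ref{kor503}: for every positive integer $n \geq 92$,
\begin{displaymath}
\log \left( 1 + \frac{2R(n)}{p_n} \right) < - \frac{1}{2 \log n} + \frac{\log \log n}{2\log^2 n} - \frac{1}{\log^2 n} + \frac{4\log \log n - 2.9}{\log^3 n} + \frac{2.9\log\log n}{2\log^4 n},
\end{displaymath}
using $(\log \log n - 2)/(2\log^2 n) = \log\log n/(2\log^2 n) - 1/\log^2 n$. Because $\log\log n/\log n \to 0$ as $n \to \infty$, each of the terms $-1/\log^2 n$, $(4\log\log n - 2.9)/\log^3 n$ and $2.9\log\log n/(2\log^4 n)$ is $O(1/\log^2 n)$, so we obtain the one-sided estimate $\log(1 + 2R(n)/p_n) \leq -1/(2\log n) + \log\log n/(2\log^2 n) + O(1/\log^2 n)$.

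Next I would apply Proposition~\ref{prop501}: for every positive integer $n \geq 26\,220$,
\begin{displaymath}
\log \left( 1 + \frac{2R(n)}{p_n} \right) > - \frac{1}{2 \log n} + \frac{\log \log n}{2 \log^2 n} - \frac{2.25}{2 \log^2 n} - \frac{(\log \log n)^2 - 4.5 \log \log n + 22.51/3}{2 \log^3 n},
\end{displaymath}
again splitting off $\log\log n/(2\log^2 n)$ from $(\log\log n - 2.25)/(2\log^2 n)$. Here the crucial elementary fact is that $(\log \log n)^2 = O(\log n)$, which makes the cubic-in-$\log\log n$ correction term $O(1/\log^2 n)$; the term $-2.25/(2\log^2 n)$ is trivially $O(1/\log^2 n)$. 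This yields $\log(1 + 2R(n)/p_n) \geq -1/(2\log n) + \log\log n/(2\log^2 n) + O(1/\log^2 n)$, and combining the two one-sided estimates gives the assertion.

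The hard part will essentially be nonexistent: the work has already been done in Proposition~\ref{prop501} and Corollary~\ref{kor503}, and all that remains is the bookkeeping that $(\log\log n)^j/\log^{j+1} n = O(1/\log^2 n)$ for $j = 1, 2$, together with noting that the constant shifts $-1/\log^2 n$ and $-2.25/(2\log^2 n)$ do not affect the $O(1/\log^2 n)$ term. If one wanted a self-contained argument one could instead start from Proposition~\ref{prop207} combined with the sharper information used to prove Propositions~\ref{prop501} and \ref{prop502}, but reusing the already-proved explicit bounds is the cleanest route.
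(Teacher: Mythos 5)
Your proposal is correct and is exactly the paper's argument: the paper proves this corollary by citing Proposition~\ref{prop501} and Corollary~\ref{kor503} directly, and your bookkeeping that every remaining term (including the $(\log\log n)^2/\log^3 n$ term, via $(\log\log n)^2 = O(\log n)$) is $O(1/\log^2 n)$ is precisely the omitted detail. Nothing further is needed.
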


\begin{proof}
The claim follows directly from Proposition \ref{prop501} and Corollary \ref{kor503}.
\end{proof}

%%%%%%%%%%%%%%%%%%%%%%%%%%%%%%%%%%%%%%%%%%%%%%%%%

\section{New bounds for the ratio of $A_n$ and $G_n$}

% First, we recall the asymptotic formula for the ratio of $A_n$ and $G_n$ given in Corollary \ref{kor207}, namely
% \begin{displaymath}
% \frac{A_n}{G_n} = \frac{e}{2} + \frac{e}{4\log p_n} + \frac{e}{\log^2 p_n} + O \left( \frac{1}{\log^3p_n} \right). \tag{7.1} \label{7.1}
% \end{displaymath}
% and the identity \eqref{1.1}; i.e.
% \begin{displaymath}
% \log \frac{A_n}{G_n} = D(n) + \log \left( 1 + \frac{2R(n)}{p_n} \right) - \log 2.
% \end{displaymath}
Now we use ths identity \eqref{1.1} together with the explicit estimates for the quantities $D(n)$ and $\log(1+2R(n)/p_n)$ obtained in Section 3 and Section 5 
to derive upper and lower bounds for the ratio of $A_n$ and $G_n$ in the direction of \eqref{1.2}. We start with the following result.
 
\begin{thm} \label{thm601}
For every positive integer $n \geq 139$, we have
\begin{equation}
\frac{A_n}{G_n} > \frac{e}{2} + \frac{e}{4\log n} - \frac{e(\log \log n - 2.8)}{4 \log^2n}. \tag{6.1} \label{6.1}
\end{equation}
\end{thm}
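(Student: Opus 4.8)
The plan is to work from the fundamental identity \eqref{1.1}, namely
\[
\log \frac{A_n}{G_n} = D(n) + \log\left(1 + \frac{2R(n)}{p_n}\right) - \log 2,
\]
and to insert a lower bound for $D(n)$ together with a lower bound for $\log(1+2R(n)/p_n)$, both already available in terms of $n$. Concretely, I would use Proposition \ref{prop306}, which gives $D(n) > 1 + 1/\log n - (\log\log n - 2.5)/\log^2 n$ for $n \geq 591$, and Proposition \ref{prop501}, which gives a lower bound for $\log(1+2R(n)/p_n)$ of the form $-1/(2\log n) + (\log\log n - 2.25)/(2\log^2 n) - (\cdots)/(2\log^3 n)$ for $n \geq 26\,220$. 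Adding these and subtracting $\log 2$ produces a lower bound for $\log(A_n/G_n)$ of the shape $(1 - \log 2) + \tfrac{1}{2\log n} + c(n)/\log^2 n + (\text{lower order})$, where the $1/\log^2 n$ coefficient combines $-(\log\log n - 2.5)$ from $D(n)$ with $+\tfrac12(\log\log n - 2.25)$ from the $R(n)$ term, so the $\log\log n$ coefficient is $-\tfrac12$, matching what \eqref{3.12} and Corollary \ref{kor504} predict for the combination.

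Having a lower bound $\log(A_n/G_n) > L(n)$ with $L(n) = (1-\log 2) + \tfrac{1}{2\log n} - \tfrac{\log\log n}{2\log^2 n} + \tfrac{a}{\log^2 n} + \tfrac{(\text{poly in }\log\log n)}{\log^3 n}$, I would then exponentiate: since $e^{1-\log 2} = e/2$ and $e^{\tfrac{1}{2\log n} + \cdots} = 1 + \tfrac{1}{2\log n} - \tfrac{\log\log n}{2\log^2 n} + \tfrac{a'}{\log^2 n} + \cdots$ by the Taylor expansion of $\exp$, one gets
\[
\frac{A_n}{G_n} > \frac{e}{2}\left(1 + \frac{1}{2\log n} - \frac{\log\log n}{2\log^2 n} + \frac{a'}{\log^2 n} + \cdots\right)
= \frac{e}{2} + \frac{e}{4\log n} - \frac{e(\log\log n - 2a')}{4\log^2 n} + (\text{error}).
\]
The target inequality has $2a' = 2.8$, so I need the accumulated constant (coming from $2.5$ and $2.25$ and the cross-terms in exponentiating $\tfrac{1}{2\log n}\cdot(-\tfrac{\log\log n}{2\log^2 n})$ etc.) to leave at least $2.8$ after absorbing the genuinely negative $1/\log^3 n$ contributions. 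Rather than track exponentials by hand, the cleanest route is: set $F(x)$ equal to the difference between the right-hand side of the claimed bound \eqref{6.1} (with $x$ in place of $n$) and the exponential of the explicit lower bound for $\log(A_n/G_n)$; show $F(x) < 0$ for all $x$ beyond some threshold $x_0$ by a derivative/monotonicity argument (or by reducing, after clearing denominators, to positivity of an explicit polynomial in $\log x$ and $\log\log x$, exactly in the style of the proof of Proposition \ref{prop501}); and then handle $139 \leq n \leq x_0$ by direct computer verification.

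The main obstacle is the bookkeeping in the exponentiation step: one must keep enough terms (down to order $1/\log^3 n$, with $(\log\log n)$-powers) so that the error term genuinely does not threaten the margin between the true $\log\log n$-free constant and the required $2.8$, and one must verify that the leftover third-order expression is negative (or small enough) for $x \geq x_0$ — this is where a polynomial-positivity lemma analogous to the $f(y), g(y)$ device in Proposition \ref{prop501} does the work. A secondary point is reconciling the two validity ranges: Proposition \ref{prop306} needs $n \geq 591$ and Proposition \ref{prop501} needs $n \geq 26\,220$, so the analytic argument is valid only for $n \geq 26\,220$ (or wherever the polynomial positivity kicks in, whichever is larger), and everything below that — in particular the full range $139 \leq n < 26\,220$ — must be confirmed numerically. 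I expect the threshold $x_0$ to be comfortably below, say, $10^{10}$, so the finite check is routine.
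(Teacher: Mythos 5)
Your proposal is correct and follows essentially the same route as the paper: insert Proposition \ref{prop306} and Proposition \ref{prop501} into the identity \eqref{1.1}, which yields exactly the exponent $\tfrac{1}{2\log n} - \tfrac{\log\log n - 2.75}{2\log^2 n} - (\cdots)/(2\log^3 n)$ you predict, after which the paper absorbs the third-order term via $0.15\log x > (\log\log x)^2 - 4.5\log\log x + 22.51/3$, exponentiates with $e^x \geq 1 + x + x^2/2$, and checks $139 \leq n \leq 465\,944\,314$ by computer. Your anticipated bookkeeping issues (the margin between the accumulated constant and $2.8$, and the sub-$26\,220$ range needing numerical verification) are precisely the points the paper handles, with a threshold of $465\,944\,315$, comfortably within your estimated $10^{10}$.
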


\begin{proof}
First, we consider the case where $n \geq 465\,944\,315$. By \eqref{1.1}, Proposition \ref{prop306} and Proposition \ref{prop501}, we 
get that the inequality
\begin{displaymath}
\frac{A_n}{G_n} > \frac{e}{2} \cdot \exp \left( \frac{1}{2\log n} - \frac{\log \log n - 2.75}{2\log^2 n} - \frac{(\log \log n)^2 - 4.5 \log \log n + 22.51/3}{2
\log^3n} \right)
\end{displaymath}
holds. Notice that $0.15\log x > (\log \log x)^2 - 4.5 \log \log x + 22.51/3$ for every $x \geq 465\,944\,315$. Hence,
\begin{displaymath}
\frac{A_n}{G_n} > \frac{e}{2} \cdot \exp \left( \frac{1}{2\log n} - \frac{\log \log n - 2.6}{2\log^2 n} \right).
\end{displaymath}
Now we use the inequality $e^x \geq 1+x + x^2/2$, which holds for every nonnegative $x$, to get
\begin{displaymath}
\frac{A_n}{G_n} > \frac{e}{2} \cdot \left( 1 + \frac{1}{2\log n} - \frac{\log \log n - 2.85}{2\log^2 n}- \frac{\log \log n}{4 \log^3 n} + \frac{0.65}{\log^3
n}\right).
\end{displaymath}
Since the function $t \mapsto 2 \cdot 0.05 - (\log\log t - 4 \cdot 0.65)/(4\log t)$ is positive for every $t \geq 2$, the required lower bound for the ratio of 
$A_n$ and $G_n$ holds. A direct computation for every positive integer $n$ such that $139 \leq n \leq 465\,944\,314$ completes the proof.
\end{proof}

\begin{rema}
Hassani \cite{hassani2013} conjectured that there exists a real number $\alpha$ with $0 < \alpha < 9.514$ and a positive integer $n_0$ such that $A_n/G_n > e/2 
+ \alpha/\log n$ for every positive integer $n \geq n_0$. Theorem \ref{thm601} proves this conjecture.
\end{rema}

Now, we derive the inequality stated in Theorem \ref{thm102}. Here, we use \eqref{3.17} and Theorem \ref{thm601}.

\begin{kor} \label{kor602}
For every positive integer $n \geq 62$, we have
\begin{displaymath}
\frac{A_n}{G_n} > \frac{e}{2} + \frac{e}{4 \log p_n} + \frac{0.61e}{\log^2 p_n}.
\end{displaymath}
\end{kor}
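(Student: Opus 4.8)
The plan is to convert the lower bound for $A_n/G_n$ in terms of $n$ from Theorem \ref{thm601} into one in terms of $p_n$, using the relation between $\log n$ and $\log p_n$ encoded in the estimates of Section 3. First I would take the inequality \eqref{6.1}, namely
\begin{displaymath}
\frac{A_n}{G_n} > \frac{e}{2} + \frac{e}{4\log n} - \frac{e(\log \log n - 2.8)}{4\log^2 n},
\end{displaymath}
valid for $n \geq 139$, and rewrite its right-hand side as $\tfrac{e}{2}(1 + \tfrac{1}{2\log n} - \tfrac{\log\log n - 2.8}{2\log^2 n})$. The goal is to bound $\tfrac{1}{2\log n} - \tfrac{\log\log n - 2.8}{2\log^2 n}$ below by $\tfrac{1}{2\log p_n} + \tfrac{1.22}{\log^2 p_n}$, which would give exactly the claimed bound $\tfrac{e}{2} + \tfrac{e}{4\log p_n} + \tfrac{0.61e}{\log^2 p_n}$.

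The key step is the substitution of $\log n$ by $\log p_n$. Here I would invoke \eqref{3.17} (or equivalently the weaker consequences \eqref{3.15}/\eqref{3.16} of Corollary \ref{kor303} and \cite{axler20174}), which gives an upper bound for $1/\log p_n$ in terms of $1/\log n$, $\log\log n/\log^2 n$, and higher-order terms. Rearranging \eqref{3.17} produces a lower bound for $1/\log n$ in terms of $1/\log p_n$ and $\log\log n$; combined with the trivial inequality $\log\log n \leq \log\log p_n \leq$ (something controllable, using $n \leq p_n$) and $1/\log n \leq 1/\log p_n \cdot (1 + \varepsilon)$-type estimates, this lets me replace every occurrence of $\log n$ by $\log p_n$ at the cost of explicit lower-order corrections. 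The point is that $1/\log n - 1/\log p_n$ is of order $\log\log n/\log^2 n$, which is precisely absorbed into the gap between the coefficient $2.8$ appearing in Theorem \ref{thm601} and what is needed to land at $0.61$.

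After carrying out this substitution, the residual terms are all of the form (polynomial in $\log\log n$)$/\log^3 p_n$ or smaller, with a definite sign once $\log p_n$ (equivalently $n$) is large enough; one checks that the function $t \mapsto (\text{leading constant}) - (\text{poly}(\log\log t))/\log t$ is positive for all $t$ beyond some explicit threshold, so that the asserted inequality holds for $n \geq N$ for an explicit $N$. I expect the main obstacle to be bookkeeping: tracking the explicit constants through the chain Theorem \ref{thm601} $\to$ \eqref{3.17} $\to$ final inequality so that the $0.61$ actually comes out (rather than, say, $0.6$ or $0.62$), and then verifying the finitely many remaining cases $62 \leq n < N$ by direct computation. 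The analytic content is light; it is the constant-chasing and the final computer check down to $n = 62$ that carry the weight.
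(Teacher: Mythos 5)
Your proposal is correct and takes essentially the same route as the paper: the paper likewise starts from Theorem \ref{thm601}, substitutes via \eqref{3.17} to trade $\log n$ for $\log p_n$, absorbs the residual $(\log\log n)$-terms using the explicit bound \eqref{3.19} (valid for $x \geq 1\,499\,820\,545$, which turns the coefficient $2.8$ into $2.8 - 0.36 = 2.44 = 2 \times 1.22$), and finishes with a computer verification for $62 \leq n \leq 1\,499\,820\,544$.
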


\begin{proof}
First, let $n$ be a positive integer with $n \geq 1\,499\,820\,545$. Using \eqref{6.1} and \eqref{3.17}, we get
\begin{displaymath}
\frac{A_n}{G_n} > \frac{e}{2} + \frac{e}{4 \log p_n} + \frac{e}{4} \left( \frac{2.8}{\log^2 n} - \frac{(\log \log n)^2 - \log \log n + 1}{\log^3 n} - 
\frac{P_8(\log \log n)}{2 \log^4 n}\right),
\end{displaymath}
where $P_8(x) = 3x^2 - 6x + 5.2$. Now we apply \eqref{3.19} to get that the required inequality is valid.
%\begin{displaymath}
%\frac{A_n}{G_n} > \frac{e}{2} + \frac{e}{4 \log p_n} + \frac{2e}{5 \log^2 n}
%\end{displaymath}
%holds, which implies that required inequality
% holds for every positive integers $n \geq ???$.
We complete the proof by verifying the remaining cases with a computer.
\end{proof}

The following corollary confirms that the ratio of the arithmetic and geometric means of the prime numbers is always greater than $e/2$, as conjectured by 
Hassani \cite{hassani2013}.

\begin{kor} \label{kor603}
For every positive integer $n$, we have
\begin{displaymath}
\frac{A_n}{G_n} > \frac{e}{2}.
\end{displaymath}
\end{kor}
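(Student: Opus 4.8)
The plan is to derive the inequality directly from Corollary \ref{kor602}, which already carries all the analytic content, and then settle the remaining small values of $n$ by a direct computation. Concretely, I would first invoke Corollary \ref{kor602}: for every positive integer $n \ge 62$ one has $A_n/G_n > e/2 + e/(4\log p_n) + 0.61\,e/\log^2 p_n$. Since $p_n \ge p_1 = 2 > 1$ for all $n$, we have $\log p_n > 0$, so the two correction terms $e/(4\log p_n)$ and $0.61\,e/\log^2 p_n$ are strictly positive; hence the right-hand side exceeds $e/2$ and the claim follows for every $n \ge 62$. (Alternatively, one could start from Theorem \ref{thm601}, which is valid for $n \ge 139$: there $e/(4\log n) - e(\log\log n - 2.8)/(4\log^2 n) > 0$ because $\log n > \log\log n - 2.8$ for all $n \ge 2$. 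But Corollary \ref{kor602} already lowers the threshold to $62$, so it is the more economical starting point.)

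It then remains to treat the values of $n$ below that threshold. For these I would compute $A_n = \frac1n\sum_{k \le n} p_k$ and $G_n = (p_1 \cdots p_n)^{1/n}$ explicitly — equivalently, by the identity \eqref{1.1} it suffices to verify that $D(n) + \log\!\bigl(1 + 2R(n)/p_n\bigr) > \log 2$ — and check the asserted inequality on a computer.

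There is essentially no genuine obstacle here: the corollary is a formal consequence of the positivity of the two lower-order terms isolated in Corollary \ref{kor602}, whose own proof rests on Theorem \ref{thm601} together with the explicit estimates for $D(n)$ and for $\log(1 + 2R(n)/p_n)$ established in Sections 3 and 5. The only step requiring attention is the bookkeeping of the finitely many values of $n$ not covered by Corollary \ref{kor602}, which is disposed of routinely by numerical verification.
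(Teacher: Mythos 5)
Your proposal is correct and follows exactly the paper's own argument: Corollary \ref{kor602} gives the inequality for $n \geq 62$ since the two additional terms are positive, and the finitely many remaining cases are checked by computer.
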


\begin{proof}
Corollary \ref{kor602} implies the validity of the required inequality for every positive integer $n \geq 62$. We verify the remaining cases with a computer.
\end{proof}

Next, we use Proposition \ref{prop305} and Proposition \ref{prop502} to find the following upper bound for the ratio of $A_n$ and $G_n$, which is stated in 
Theorem \ref{thm103}.

\begin{thm} \label{thm604}
For every positive integer $n \geq 294\,635$, we have
\begin{displaymath}
\frac{A_n}{G_n} < \frac{e}{2} + \frac{e}{4\log p_n} + \frac{1.52e}{\log^2 p_n}.
\end{displaymath}
\end{thm}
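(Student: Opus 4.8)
The plan is to combine Hassani's identity \eqref{1.1}, namely $\log(A_n/G_n) = D(n) + \log(1+2R(n)/p_n) - \log 2$, with the explicit upper bounds for $D(n)$ and for $\log(1+2R(n)/p_n)$ already obtained. I would split into the analytic range $n \geq 74\,004\,585$ and the finite range $294\,635 \leq n \leq 74\,004\,584$. In the analytic range one has $n \geq 74\,004\,585 \geq 6\,077$, so the inequality \eqref{3.9} of Proposition \ref{prop305} and Proposition \ref{prop502} both apply; adding them, subtracting $\log 2$, and using $\log n \leq \log p_n$ to bound the negative term $-2.9/(2\log^2 n\log p_n)$ from above by $-2.9/(2\log^3 p_n)$, I get
\begin{displaymath}
\log \frac{A_n}{G_n} < 1 - \log 2 + \frac{1}{2\log p_n} + \frac{2.84}{\log^2 p_n} - \frac{2.9}{2\log^3 p_n},
\end{displaymath}
and hence
\begin{displaymath}
\frac{A_n}{G_n} < \frac{e}{2} \exp \left( \frac{1}{2\log p_n} + \frac{2.84}{\log^2 p_n} - \frac{2.9}{2\log^3 p_n} \right).
\end{displaymath}

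Next I would pass from the exponential back to a polynomial in $1/\log p_n$. Since $p_n > n\log n$, one has $\log p_n \geq 20$ throughout the analytic range, so with $x=\log p_n$ the exponent $t = \tfrac{1}{2x} + \tfrac{2.84}{x^2} - \tfrac{1.45}{x^3}$ lies in $(0,\log(4/3))$ and the elementary bound $e^t < 1 + t + \tfrac{2t^2}{3}$ from \eqref{4.3} is available. A routine expansion of $t + \tfrac{2t^2}{3}$ then shows
\begin{displaymath}
\frac{e}{2}\exp \left( \frac{1}{2x} + \frac{2.84}{x^2} - \frac{1.45}{x^3} \right) < \frac{e}{2}\left( 1 + \frac{1}{2x} + \frac{3.04}{x^2} \right)
\end{displaymath}
for every $x \geq 20$, and since $\tfrac{e}{2}\bigl(1 + \tfrac{1}{2\log p_n} + \tfrac{3.04}{\log^2 p_n}\bigr) = \tfrac{e}{2} + \tfrac{e}{4\log p_n} + \tfrac{1.52e}{\log^2 p_n}$ this is precisely the assertion for every $n \geq 74\,004\,585$. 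For $294\,635 \leq n \leq 74\,004\,584$ I would verify the inequality directly with a computer, as is done repeatedly in the paper; this is about $7.4\cdot 10^7$ values of $n$ and requires only a sieve up to $p_{74\,004\,585}\approx 1.5\cdot 10^9$ together with running sums and products of the primes.

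The step I expect to be the real obstacle is this last reduction near the bottom of the analytic range. At $n = 74\,004\,585$ one has $\log p_n\approx 21$, and there $\exp\bigl(\tfrac{1}{2\log p_n} + \tfrac{2.84}{\log^2 p_n}\bigr) > 1 + \tfrac{1}{2\log p_n} + \tfrac{3.04}{\log^2 p_n}$, so one cannot simply discard the correction coming from Proposition \ref{prop502}: the usable margin is created entirely by the retained term $-2.9/(2\log^3 p_n)$ (equivalently the $-1.45/x^3$ inside the exponent), and no finer information about $R(n)$, nor any sharper comparison of $\log n$ with $\log p_n$, is needed. The rest is routine bookkeeping with the constant $2.84$ against the target $3.04$, together with the finite computer check.
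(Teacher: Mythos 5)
Your proposal is correct and follows essentially the same route as the paper: identity \eqref{1.1} combined with Proposition \ref{prop305} and Proposition \ref{prop502}, the passage from $\log^2 n\log p_n$ to $\log^3 p_n$, the bound \eqref{4.3} on the exponential, and a final comparison of the resulting coefficient with $3.04$ using $\log p_n\geq 20$ (the paper uses $19.937$), plus a computer check for $294\,635\leq n\leq 74\,004\,584$. Your observation that the retained term $-2.9/(2\log^3 p_n)$ is what supplies the margin is also accurate — the paper's expansion carries exactly this contribution in its $1.33/(3\log^3 p_n)$ coefficient.
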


\begin{proof}
First, let $n$ be a positive integer with $n \geq 74\,004\,585$.
% \max\{ 74\,004\,585, 6\,077, \pi(e^{19.937})+1 = 24\,131\,125 \}.
By \eqref{1.1}, Proposition \ref{prop305}, Proposition \ref{prop502} and \eqref{4.3}, we obtain that
% \begin{displaymath}
% \frac{A_n}{G_n} < \frac{e}{2} \cdot \exp \left( \frac{1}{2\log p_n} + \frac{2.84}{\log^2 p_n} - \frac{2.9}{2\log^3 p_n} \right).
% \end{displaymath}
% Now we use \eqref{4.3} to get that
\begin{displaymath}
\frac{A_n}{G_n} < \frac{e}{2} \cdot \left( 1 + \frac{1}{2\log p_n} + \frac{9.02}{3\log^2 p_n} + \frac{1.33}{3\log^3 p_n} + \frac{13.2312}{3\log^4 p_n}\right).
\end{displaymath}
Since $\log p_n \geq 19.937$, we have $9.02/3 + 1.33/(3\log p_n) + 13.2312/(3\log^2 p_n) < 3.04$, which completes the proof for every positive integer $n \geq 
74\,004\,585$. We conclude by direct computation.
\end{proof}

Finally we use Theorem \ref{thm604} and the inequality \eqref{3.17} to prove the following result.

\begin{kor} \label{kor605}
For every positive integer $n \geq 2$, we have
\begin{displaymath}
\frac{A_n}{G_n} < \frac{e}{2} + \frac{e}{4\log n} - \frac{e( \log \log n - 6.44)}{4 \log^2n}.
\end{displaymath}
\end{kor}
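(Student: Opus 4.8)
The plan is to split the argument according to the size of $n$, handling large $n$ analytically and the remaining finitely many cases by direct computation. The analytic part rests on Theorem \ref{thm604}, which for $n \geq 294\,635$ gives
\begin{displaymath}
\frac{A_n}{G_n} < \frac{e}{2} + \frac{e}{4}\left( \frac{1}{\log p_n} + \frac{6.08}{\log^2 p_n} \right),
\end{displaymath}
together with the elementary observation that the right-hand side of the asserted inequality can be rewritten as $\frac{e}{2} + \frac{e}{4}\left( \frac{1}{\log n} - \frac{\log\log n}{\log^2 n} + \frac{6.44}{\log^2 n} \right)$. Comparing these two expressions, it suffices to establish
\begin{displaymath}
\frac{1}{\log p_n} + \frac{6.08}{\log^2 p_n} \leq \frac{1}{\log n} - \frac{\log\log n}{\log^2 n} + \frac{6.44}{\log^2 n}.
\end{displaymath}

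To bound the left-hand side I would apply \eqref{3.17} to the term $1/\log p_n$ and use the trivial estimate $1/\log^2 p_n \leq 1/\log^2 n$, which holds because $p_n \geq n$. After inserting both bounds and cancelling the common terms $1/\log n$ and $\log\log n/\log^2 n$, the inequality collapses to
\begin{displaymath}
\frac{(\log\log n)^2 - \log\log n + 1}{\log^3 n} + \frac{P_8(\log\log n)}{2\log^4 n} \leq \frac{0.36}{\log^2 n},
\end{displaymath}
which is exactly \eqref{3.19}; indeed the constant $6.44$ has evidently been chosen so that $6.44 - 6.08 = 0.36$ matches the slack available in \eqref{3.19}. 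Since \eqref{3.19} is valid for every $x \geq 1\,499\,820\,545$, this proves the corollary for all $n \geq 1\,499\,820\,545$.

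For the remaining range $2 \leq n \leq 1\,499\,820\,544$ I would argue computationally. When $294\,635 \leq n \leq 1\,499\,820\,544$, Theorem \ref{thm604} still applies, so it is enough to check the displayed inequality $1/\log p_n + 6.08/\log^2 p_n \leq 1/\log n - \log\log n/\log^2 n + 6.44/\log^2 n$ numerically for each such $n$, which requires only the values $p_n$. For $2 \leq n \leq 294\,634$ I would instead compute $A_n/G_n$ directly from $p_1, \ldots, p_n$ (working with $\sum_{k\leq n}\log p_k$ for the geometric mean) and compare with the right-hand side; here the margin is enormous — already at $n = 2$ the right-hand side exceeds $11$ while $A_2/G_2 = 2.5/\sqrt{6} \approx 1.02$ — so this step is harmless.

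The reduction itself is entirely routine; the one genuinely laborious ingredient is the finite verification over $[294\,635,\,1\,499\,820\,544]$, which needs $p_n$ for $n$ up to roughly $1.5 \cdot 10^9$, i.e. a sieve of the primes up to about $3.4 \cdot 10^{10}$. If one wished to shorten that range, one could replace $1/\log^2 p_n \leq 1/\log^2 n$ by the sharper bound $\log p_n \geq \log n + \log\log n$ (valid for $n \geq 16$ by Dusart's estimate \eqref{5.3}) and re-optimise the auxiliary inequality, at the price of messier constants; the clean route above together with the computer check is the path I would take.
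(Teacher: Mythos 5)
Your proposal is correct and follows essentially the same route as the paper: apply Theorem \ref{thm604}, bound $1/\log p_n$ via \eqref{3.17} and $1/\log^2 p_n$ by $1/\log^2 n$ to arrive at the paper's intermediate inequality \eqref{6.2}, then invoke \eqref{3.19} for $n \geq 1\,499\,820\,545$ and finish the remaining range by computer. The only cosmetic difference is that you split the finite verification at $294\,635$, whereas the paper checks the stated inequality directly over all of $2 \leq n \leq 1\,499\,820\,544$.
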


\begin{proof}
By Theorem \ref{thm604} and the inequality \eqref{3.17}, we obtain that
\begin{equation}
\frac{A_n}{G_n} < \frac{e}{2} + \frac{e}{4\log n} - \frac{e( \log \log n - 6.08)}{4 \log^2n} + \frac{e((\log \log n)^2 - \log \log n + 1)}{4\log^3 n} + 
\frac{eP_8(\log \log n)}{8 \log^4 n} \tag{6.2} \label{6.2}
\end{equation}
for every positive integer $n \geq 294\,635$. Applying \eqref{3.19} to \eqref{6.2}, the claim follows for every positive integer $n \geq 1\,499\,820\,545$. A 
computer check shows the correctness of the required inequality for every positive integer $n$ satisfying $2 \leq n \leq 1\,499\,820\,544$.
\end{proof}

%The inequalities \eqref{1.2} imply that $A_n/G_n = e/2 + O(1/\log n)$. In the following corollary, we give a more accurate asymptotic formula for the ratio of
%the arithmetic and geometric means of the prime number.

%\begin{kor} \label{k706}
%We have
%\begin{displaymath}
%\frac{A_n}{G_n} = \frac{e}{2} + \frac{e}{4\log n} - \frac{e \log \log n}{4 \log^2 n}+ O \left( \frac{1}{\log^2n} \right).
%\end{displaymath}
%\end{kor}

%\begin{proof}
%The claim follows directly from Theorem \ref{thm701} and Corollary \ref{kor705}.
%\end{proof}

\begin{rema}
One of the conjectures concerning the ratio of $A_n$ and $G_n$ stated by Hassani \cite{hassani2013} is still open, namely that the sequence $(A_n/G_n)_{n \in 
\N}$ is strictly decreasing for every positive integer $n \geq 226$.
\end{rema}

%%%%%%%%%%%%%%%%%%%%%%%%%%%%%%%%%%%%%%%%%%%%%%%%%

\section*{Acknowledgement}
%I would like to thank the anonymous reviewers for their careful reading of this paper.
I would like to thank Mehdi Hassani for drawing my attention to the present subject. I would also like to express my great appreciation to Marc Del\'{e}glise
for the computation of several special values of Chebyshev's $\vartheta$-function. 
%He also wishes to thank the anonymous referees for their careful reading of this paper and their helpful comments.
%Furthermore, he wishes to thank Allysa Lumley for pointing out a mistaken sign in Lemma 8.


\begin{thebibliography}{10}
\bibitem{ap} T. Apostol, \emph{Introduction to analytic number theory}, Springer, New York--Heidelberg, 1976.
% \bibitem{axler2013} C. Axler, \emph{Über die Primzahl-Zählfunktion, die $n$-te Primzahl und verallgemeinerte \nolinebreak Rama\-nujan-Primzahlen}, PhD thesis, 
% Düsseldorf, 2013. Available at http://docserv.uni-duesseldorf.de/servlets/DerivateServlet/Derivate-28284/pdfa-1b.pdf.
\bibitem{axler2015} C. Axler, On a sequence involving prime numbers,  \textit{J. Integer Seq.} \textbf{18} (2015), no. 7, Article 15.7.6, 13 pp.
\bibitem{axler2017} C. Axler, New bounds for the sum of the first $n$ prime numbers, \textit{in progress}.
\bibitem{axler20172} C. Axler, New estimates for some functions defined over primes, preprint, 2017. Available at \url{arxiv.org/1703.08032}.
\bibitem{axler20173} C. Axler, Estimates for $\pi(x)$ for large values of $x$ and Ramanujan's prime counting inequality, preprint, 2017. Available at 
\url{arxiv.org/1703.02407}.
\bibitem{axler20174} C. Axler, New estimates for the $n$-th prime number, preprint, 2017. Available at \url{arxiv.org/1706.03651}.
\bibitem{bruedern} J. Brüdern, \emph{Einführung in die analytische Zahlentheorie}, Springer Lehrbuch, 1995.
\bibitem{buethe} J. Büthe, An analytic method for bounding $\psi(x)$, to appear in \textit{Math. Comp}.
\bibitem{cp} M. Cipolla, La determinazione assintotica dell' $n^{imo}$ numero primo, \textit{Rend. Accad. Sci. Fis-Mat. Napoli} (3) \textbf{8} (1902), 132--166.
\bibitem{pd} P. Dusart, \textit{Autour de la fonction qui compte le nombre de nombres premiers}, Thesis, Universit\'{e} de Limoges, 1998.
\bibitem{dusart99} P. Dusart, The $k$-th prime is greater than $k(\ln k + \ln \ln k - 1)$ for $k \geq 2$, \textit{Math. Comp.} \textbf{68} (1999), no. 225, 
411--415.
\bibitem{hassani2013} M. Hassani, On the ratio of the arithmetic and geometric means of the prime numbers and the number e, \textit{Int. J. Number Theory} 
\textbf{9} (2013), no. 6, 1593--1603.
\bibitem{kourbatov} A. Kourbatov, On the geometric mean of the first $n$ primes, preprint, 2016. Available at \url{arxiv.org/1603.00855}.
\bibitem{panaitopol} L. Panaitopol, An inequality concerning the prime numbers, \textit{Notes on Number Theory and Discrete Mathematics}, Vol. \textbf{5}, 1999,
No. 2, 52--54.
\bibitem{panaitopol2000} L. Panaitopol, An inequality involving prime numbers, \textit{Univ. Beograd Publ. Elektrotehn. Fak. Ser. Mat.}, Vol. \textbf{11}, 2000,
33--35.
\bibitem{pan3} L. Panaitopol, A formula for $\pi(x)$ applied to a result of Koninck-Ivi\'{c}, \textit{Nieuw Arch. Wiskd.} (5) \textbf{1} (2000), no. 1, 55--56.
\bibitem{rivera} C. Rivera, ed., Conjecture 67. Primes and $e$, 2010. Available at \url{primepuzzles.net/conjectures/conj_067.htm}
\bibitem{rosser1962} J. B. Rosser and L. Schoenfeld, Approximate formulas for some functions of prime numbers, \textit{Illinois J. Math.} \textbf{6}:1 (1962),
64--94.
% \bibitem{sandor} J. S\'{a}ndor, On certain bounds and limits for prime numbers, \textit{Notes on Number Theory and Discrete Mathematics}, Vol. \textbf{18}
% (2012), No. 1, 1--5.
\bibitem{sandor2011} J. S\'{a}ndor and A. Verroken, On a limit involving the product of prime numbers, \textit{Notes on Number Theory and Discrete Mathematics},
Vol. \textbf{17} (2011), No. 2, 1--3.
% \bibitem{sloane} N. J. A. Sloane, Sequence A062049, The on-line encyclopedia of integer sequences. \url{https://oeis.org/A062049}.
\end{thebibliography}
\end{document}